\newtheorem{theorem}{Theorem}[section]
\newtheorem{lemma}[theorem]{Lemma}
\newtheorem{proposition}[theorem]{Proposition}
\newtheorem{corollary}[theorem]{Corollary}
\theoremstyle{definition}
\newtheorem{definition}[theorem]{Definition}
\theoremstyle{remark}
\newtheorem{remark}[theorem]{Remark}
\numberwithin{equation}{section}
\newcommand{\inv}{\ensuremath\mathrm{inv}}
\newcommand{\Des}{\ensuremath\mathrm{Des}}
\newcommand{\des}{\ensuremath\mathrm{des}}
\newcommand{\wt}{\ensuremath\mathrm{wt}}
\newcommand{\R}{\ensuremath{R}}
\newcommand{\YD}{\ensuremath\mathbb{D}}
\newcommand{\descend}{\ensuremath\mathbb{D}}
\newcommand{\ascend}{\ensuremath\mathbb{A}}
\newcommand{\SYT}{\ensuremath\mathrm{SYT}}
\newcommand{\Key}{\ensuremath\mathrm{SKT}}
\newcommand{\QKT}{\ensuremath\mathrm{QKT}}
\newcommand{\swap}{\ensuremath\mathfrak{s}}
\newcommand{\braid}{\ensuremath\mathfrak{b}}
\newcommand{\flatten}{\ensuremath\mathrm{flat}}
\newcommand{\sort}{\ensuremath\mathrm{sort}}
\newcommand{\stanley}{\ensuremath\mathrm{S}}
\newcommand{\schubert}{\ensuremath\mathfrak{S}}
\newcommand{\key}{\ensuremath\kappa}
\newcommand{\fund}{\ensuremath\mathfrak{F}}
\newlength\cellsize \setlength\cellsize{12\unitlength}
\newcommand\cellify[1]{\def\thearg{#1}\def\nothing{}%
\ifx\thearg\nothing\vrule width0pt height\cellsize depth0pt%
  \else\hbox to 0pt{\usebox2\hss}\fi%
  \vbox to 12\unitlength{\vss\hbox to 12\unitlength{\hss$#1$\hss}\vss}}
\newcommand\tableau[1]{\vtop{\let\\=\cr
\setlength\baselineskip{-12000pt}
\setlength\lineskiplimit{12000pt}
\setlength\lineskip{0pt}
\halign{&\cellify{##}\cr#1\crcr}}}
\newcommand{\graybox}{\textcolor[RGB]{220,220,220}{\rule{1\cellsize}{1\cellsize}}\hspace{-\cellsize}\usebox2}
\newcommand\gridify[1]{\vbox to 10\unitlength{\vss\hbox to 10\unitlength{\hss$_{#1}$\hss}\vss}}
\newcommand\wires[1]{\vtop{\let\\=\cr
\setlength\baselineskip{-10000pt}
\setlength\lineskiplimit{10000pt}
\setlength\lineskip{0pt}
\halign{&\gridify{##}\cr#1\crcr}}}
\begin{document}

%%%%%%%%%%%%%%%%%%%%%%%%%%%%%%%%%%%%%%%%%%%%%%%%%%%%%%%%%%%%
%  TITLE PAGE information
%%%%%%%%%%%%%%%%%%%%%%%%%%%%%%%%%%%%%%%%%%%%%%%%%%%%%%%%%%%%

%     [Short Title]{Full Title}
\title[Weak dual equivalence]{Weak dual equivalence for polynomials}  

%    Information for first author
\author[S. Assaf]{Sami Assaf}
\address{Department of Mathematics, University of Southern California, Los Angeles, CA 90089}
\email{shassaf@usc.edu}
%\thanks{}

%    General info
\subjclass[2010]{Primary 05E05; Secondary 05A15, 05A19, 05E10, 05E18, 14N15}
%% % 05 COMBINATORICS
%% %% A Enumerative combinatorics
% 05A15	Exact enumeration problems, generating functions
% 05A19	Combinatorial identities, bijective combinatorics
%% %% E Algebraic combinatorics
% 05E05	Symmetric functions and generalizations
% 05E10	Combinatorial aspects of representation theory
% 05E18	Group actions on combinatorial structures
%% % 14 ALGEBRAIC GEOMETRY
%% %% N Projective and enumerative geometry
%% 14N15 % Classical problems, Schubert calculus

%\date{\today}

%\dedicatory{}

\keywords{Dual equivalence, key polynomials, slide polynomials, Schubert polynomials}

\begin{abstract}
  We use dual equivalence to give a short, combinatorial proof that Stanley symmetric functions are Schur positive. We introduce weak dual equivalence, and use it to give a short, combinatorial proof that Schubert polynomials are key positive. To demonstrate further the utility of this new tool, we use weak dual equivalence to prove a nonnegative Littlewood--Richardson rule for the key expansion of the product of a key polynomial and a Schur polynomial, and to introduce skew key polynomials that, when skewed by a partition, expand nonnegatively in the key basis. 
\end{abstract}

\maketitle
\tableofcontents

%%%%%%%%%%%%%%%%%%%%%%%%%%%%%%%%%%%%%%%%%%%%%%%%%%%%%%%%%%%%%%%%
%
\section{Introduction}
%
%%%%%%%%%%%%%%%%%%%%%%%%%%%%%%%%%%%%%%%%%%%%%%%%%%%%%%%%%%%%%%%%
\label{sec:introduction}

Schur functions enjoy deep connections with representation theory and algebraic geometry. The quintessential problem of proving that a given function expands nonnegatively in the Schur basis arises because these Schur coefficients enumerate multiplicities of irreducible components or dimensions of algebraic varieties. In earlier work, the author developed a general framework, called \emph{dual equivalence}, for proving that a given function is symmetric and Schur positive\cite{Ass15}. At its core, the method imposes a rigid structure on the set of combinatorial objects that generate the given function that ensures there is a weight-preserving bijection with standard Young tableaux, the latter of which generate Schur functions.

In this paper, we begin with a new application of dual equivalence to establish that \emph{Stanley symmetric functions} \cite{Sta84}, introduced by Stanley as a tool to enumerate reduced expressions for permutations, are Schur positive. While Edelman and Greene prove this using a complicated insertion algorithm \cite{EG87}, the new proof we present is a vast simplification. Furthermore, this application sets the stage for a generalization of dual equivalence to the general (not necessarily symmetric) polynomial setting with ramifications in representation theory and geometry.

Macdonald noted that Stanley symmetric functions are stable limits of \emph{Schubert polynomials} \cite{Mac91}, introduced by Lascoux and Sch{\"u}tzenberger \cite{LS82} as polynomial representatives of Schubert classes for the cohomology of the flag manifold. One of the fundamental open problems in algebraic combinatorics is to give a nonnegative rule for the Schubert expansion of a product of Schubert polynomials. Geometrically, these coefficients give intersection numbers for suitable intersections of Schubert varieties, but as yet there is no combinatorial proof that they are nonnegative.

Related to this, Demazure studied characters for certain general linear group modules \cite{Dem74} that coincide with \emph{key polynomials} studied combinatorially by Lascoux and Sch{\"u}tzenberger \cite{LS90}. These key polynomials enjoy rich geometric interpretations, but their structure constants are not, in general, nonnegative, though Haglund, Luoto, Mason and van Willigenburg proved that they are in the special case when one of the polynomials is symmetric \cite{HLMvW11}. Nevertheless, Lascoux  and Sch{\"u}tzenberger proved that Schubert polynomials expand nonnegatively in the key basis \cite{LS90}. Moreover, the stable limit of a key polynomial is a Schur polynomial, so this expansion may be regarded as a polynomial pull-back of the Schur expansion of a Stanley symmetric function.

Developing this idea, we generalize dual equivalence into \emph{weak dual equivalence} that provides a general framework for proving that a given polynomial is nonnegative in the key basis. We do this by developing a combinatorial model for key polynomials, which we call \emph{standard key tableaux}, and using this to define a similarly rigid structure on combinatorial objects that ensures a weight-preserving bijection with standard key tableaux, thereby ensuring key positivity. An immediate application is that the exact structure that gives Schur positivity of Stanley symmetric functions by dual equivalence also gives key positivity of Schubert polynomials by weak dual equivalence. %This proof is a simplification over that the left-nil-key algorithm of Lascoux and Sch{\"u}tzenberger.

Pushing this still further, we recall from \cite{Ass15} how dual equivalence gives a simple Littlewood--Richardson rule for the Schur expansion of a product of Schur functions, and present an analogous model for the product of key polynomials. In so doing, we see precisely why a general product of key polynomials is not nonnegative in the key basis, and recover a remarkably simplified proof of Haglund, Luoto, Mason and van Willigenburg's Littlewood--Richardson rule for the key expansion of the product of a key polynomial and Schur polynomial. Following the analogy with Schur functions, we also define skew key polynomials that, when skewed by a partition shape, are nonnegative sums of key polynomials. While not nonnegative in general, we prove that these skew key polynomials always stablize to Schur positive functions.

% \begin{center}
% {\sc Acknowledgments}
% \end{center}
% 
% These people were super helpful.

%%%%%%%%%%%%%%%%%%%%%%%%%%%%%%%%%%%%%%%%%%%%%%%%%%%%%%%%%%%%%%%%
%
\section{Schur positivity of Stanley symmetric functions}
%
%%%%%%%%%%%%%%%%%%%%%%%%%%%%%%%%%%%%%%%%%%%%%%%%%%%%%%%%%%%%%%%%
\label{sec:symmetric}

%%%%%%%%%%%%%%%%%%%%%%%%%%%%%%%%%%%%%%%%%%%%%%%%%%%%%%%%%%%%%%%%
\subsection{Schur functions}
%%%%%%%%%%%%%%%%%%%%%%%%%%%%%%%%%%%%%%%%%%%%%%%%%%%%%%%%%%%%%%%%
\label{sec:schur}

Let $\mathbb{N}$ and $\mathbb{P}$ denote the sets of nonnegative and positive integers, respectively. We use letters $a,b,c$ to denote weak compositions of length $n$, i.e. sequences in $\mathbb{N}^n$, letters $\alpha,\beta,\gamma$ to denote strong compositions, i.e. sequences in $\mathbb{P}^k$ for some $k$, and $\lambda,\mu,\nu$ to denote partitions, i.e. weakly decreasing sequences in $\mathbb{P}^k$ for some $k$. Given a weak composition $a$, let $\flatten(a)$ denote the strong composition obtained by removing all zero parts. Given a strong composition $\alpha$, let $\sort(\alpha)$ denote the partition obtained by rearranging the parts of $\alpha$ into weakly decreasing order, and extend this to weak compositions by $\sort(a) = \sort(\flatten(a))$.

Given a weak composition $a$, we let $x^a$ denote the monomial $x_1^{a_1} \cdots x_n^{a_n}$, and we use the notation $X$ to denote the infinite set of variables $\{x_1,x_2,\ldots\}$ used for functions.

For our discussion of symmetric functions, we defer to the beautiful exposition in Macdonald \cite{Mac95}, though we use coordinate notation (French) as opposed to matrix notation (English). The \emph{Young diagram} of a partition $\lambda$, denoted by $\YD(\lambda)$, is the diagram with $\lambda_i$ unit cells left justified in row $i$. For example, the Young diagram for $(5,4,4,1)$ is given in Figure~\ref{fig:Young}.

\begin{figure}[ht]
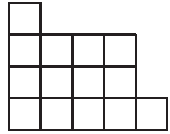

  \begin{displaymath}
    \tableau{ \ \\ \ & \ & \ & \ \\ \ & \ & \ & \ \\ \ & \ & \ & \ & \ \\}
  \end{displaymath}
  \caption{\label{fig:Young}The Young diagram for $(5,4,4,1)$.}
\end{figure}

The \emph{Schur functions}, indexed by partitions, form an important basis for symmetric functions. They arise in many contexts, including as irreducible characters for the general linear group and as polynomial representatives for Schubert cycles of the Grassmannian. For our purposes, we define them combinatorially as the \emph{quasisymmetric generating function} for standard Young tableaux.

Gessel introduced the \emph{fundamental quasisymmetric functions} \cite{Ges84}, indexed by strong compositions, that form an important basis for quasisymmetric functions. Given strong compositions $\alpha,\beta$, we say that \emph{$\beta$ refines $\alpha$} if there exist indices $i_1<\ldots<i_k$ such that
\begin{displaymath}
  \beta_1 + \cdots + \beta_{i_j} = \alpha_1 + \cdots + \alpha_j.
\end{displaymath}
For example, $(1,2,2)$ refines $(3,2)$ but does not refine $(2,3)$.

\begin{definition}[\cite{Ges84}]
  For $\alpha$ a strong composition, the fundamental quasisymmetric function $F_{\alpha}$ is given by
  \begin{equation}
    F_{\alpha}(X) = \sum_{\flatten(b) \ \mathrm{refines} \ \alpha} x^b,
    \label{e:F_n}
  \end{equation}
  where the sum is over weak compositions $b$ whose flattening refines $\alpha$.
\end{definition}
  
For example, restricting to three variables to make the expansion finite, we have
\begin{displaymath}
  F_{(3,2)}(x_1,x_2,x_3) = x^{032} + x^{302} + x^{320} + x^{311} + x^{122} + x^{212}.
\end{displaymath}

A \emph{standard Young tableau} is a bijective filling of a Young diagram with entries $\{1,2,\ldots,n\}$ such that entries increase along rows and up columns. Let $\SYT(\lambda)$ denote the set of standard Young tableaux of shape $\lambda$. For example, Figure~\ref{fig:SYT} shows the standard Young tableaux of shape $(3,2)$. 

For a standard Young tableau $T$, say that $i$ is a \emph{descent of $T$} if $i+1$ lies weakly left of (equivalently, strictly above) $i$. The \emph{descent composition of $T$}, denoted by $\Des(T)$, is the strong composition given by maximal length runs between descents. For example, see Figure~\ref{fig:SYT}.

\begin{figure}[ht]
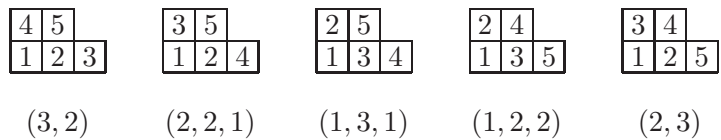

  \begin{displaymath}
    \begin{array}{c@{\hskip 2em}c@{\hskip 2em}c@{\hskip 2em}c@{\hskip 2em}c}
      \tableau{4 & 5 \\ 1 & 2 & 3} & 
      \tableau{3 & 5 \\ 1 & 2 & 4} & 
      \tableau{2 & 5 \\ 1 & 3 & 4} &
      \tableau{2 & 4 \\ 1 & 3 & 5} & 
      \tableau{3 & 4 \\ 1 & 2 & 5} \\ \\
      (3,2) &
      (2,2,1) &
      (1,3,1) &
      (1,2,2) &
      (2,3) 
    \end{array}
  \end{displaymath}
  \caption{\label{fig:SYT}The standard Young tableaux for $(3,2)$ and their descent compositions.}
\end{figure}

The following definition for a Schur function follows from the classical one (see \cite{Mac95}) by a result due to Gessel \cite{Ges84}.

\begin{definition}
  For $\lambda$ a partition, the Schur function $s_{\lambda}$ is given by
  \begin{equation}
    s_{\lambda}(X) = \sum_{T \in \SYT(\lambda)} F_{\Des(T)}(X).
    \label{e:schur-F}
  \end{equation}
  \label{def:schur-F}
\end{definition}

For example, from Figure~\ref{fig:SYT} we compute
\begin{displaymath}
  s_{(3,2)}(X) = F_{(2,3)}(X) + F_{(1,2,2)}(X) + F_{(1,3,1)}(X) + F_{(3,2)}(X) + F_{(2,2,1)}(X).
\end{displaymath}

The \emph{reverse row reading word} of a Young tableau $T$ is obtained by reading the enties right to left along the top row, then right to left for the next row down, and so on. We say that a standard Young tableau is \emph{super-standard} if its reverse row reading word is the reverse of the identity. For examples, see Figure~\ref{fig:super-SYT}.

\begin{figure}[ht]
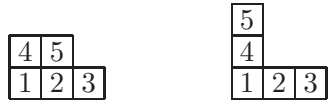

  \begin{displaymath}
    \tableau{\\ 4 & 5 \\ 1 & 2 & 3} \hspace{4\cellsize} \tableau{5 \\ 4 \\ 1 & 2 & 3}
  \end{displaymath}
  \caption{\label{fig:super-SYT}The super-standard Young tableaux for $(3,2)$ (left) and $(3,1,1)$ (right).}
\end{figure}

\begin{proposition}
  For any partition $\lambda$, there exists a unique super-standard Young tableau $Y$ of shape $\lambda$. Moreover, for $T\in\SYT(\lambda)$, we have $\Des(T) = \lambda$ if and only if $T=Y$.
  \label{prop:super-SYT}
\end{proposition}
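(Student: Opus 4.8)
The plan is to exhibit $Y$ explicitly, deduce existence and uniqueness together, and then prove the descent characterization in two directions, with the converse carrying the real content.

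First I would define $Y$ to be the \emph{row filling}: reading rows from the bottom upward, place $1,\ldots,\lambda_1$ along the first row, then $\lambda_1+1,\ldots,\lambda_1+\lambda_2$ along the second, and in general the consecutive block $\lambda_1+\cdots+\lambda_{k-1}+1,\ldots,\lambda_1+\cdots+\lambda_k$ along row $k$, each increasing left to right. This is a genuine element of $\SYT(\lambda)$: rows increase by construction, and since every entry of row $k+1$ exceeds every entry of row $k$, columns increase upward, where the partition condition $\lambda_{k+1}\le\lambda_k$ guarantees each cell of row $k+1$ sits directly above a cell of row $k$. Reading right to left along the top row and continuing downward produces $n,n-1,\ldots,1$, the reverse of the identity, so $Y$ is super-standard and existence holds. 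For uniqueness I would note that the reverse row reading word visits the cells of $\YD(\lambda)$ in an order completely determined by $\lambda$; requiring this word to be $n,n-1,\ldots,1$ therefore assigns a prescribed value to each cell and so pins down the filling. Hence any super-standard tableau of shape $\lambda$ coincides with $Y$, which we already know to be a valid standard Young tableau.

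Next I would compute $\Des(Y)$ directly to get the ``only if'' direction. Within a single row the consecutive entries $i,i+1$ are placed side by side, so $i+1$ lies strictly right of and weakly below $i$ and $i$ is not a descent. At the junction between rows $k$ and $k+1$, the entry $\lambda_1+\cdots+\lambda_k$ ends row $k$ while $\lambda_1+\cdots+\lambda_k+1$ begins row $k+1$ in the leftmost column, which is strictly above and weakly left of it, so this index is a descent. Thus the descent set is exactly $\{\lambda_1,\lambda_1+\lambda_2,\ldots,\lambda_1+\cdots+\lambda_{\ell-1}\}$ and $\Des(Y)=\lambda$.

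The substance lies in the converse: if $T\in\SYT(\lambda)$ has $\Des(T)=\lambda$, then $T=Y$. Here I would induct up the rows, proving that the block $B_k=\{\lambda_1+\cdots+\lambda_{k-1}+1,\ldots,\lambda_1+\cdots+\lambda_k\}$ occupies row $k$. The condition $\Des(T)=\lambda$ says precisely that every index interior to a block $B_k$ is a non-descent while each index ending a block is a descent. Entry $1$ sits at the unique addable cell of the empty diagram, namely $(1,1)$. Assuming rows $1,\ldots,k-1$ have been filled by $B_1,\ldots,B_{k-1}$, the only addable cell is the leftmost cell of row $k$, so the first entry of $B_k$ lands there; each subsequent non-descent forces the next entry to lie strictly right of and weakly below its predecessor, and the unique addable cell meeting this is the next cell along row $k$—the only competing addable cell, the start of row $k+1$, lies strictly above and is excluded. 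So $B_k$ fills row $k$ exactly and the induction closes with $T=Y$. The main obstacle is making this forcing airtight: at every step one must confirm that among the addable cells of the partially filled shape exactly one is compatible with the prevailing (non-)descent condition, which repeatedly invokes the standard-tableau constraints together with the partition inequalities $\lambda_k\le\lambda_{k-1}$ to ensure the cell below each candidate is already occupied.
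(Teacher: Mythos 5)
Your proposal is correct and follows essentially the same route as the paper: construct $Y$ by the explicit row filling, read off $\Des(Y)=\lambda$, and for the converse use the fact that non-descents force consecutive entries strictly rightward so each block $\lambda_1+\cdots+\lambda_{k-1}+1,\ldots,\lambda_1+\cdots+\lambda_k$ must fill row $k$. Your addable-cell induction is just a more formalized version of the paper's horizontal-strip argument, so no substantive difference.
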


\begin{proof}
  Clearly $Y$ is unique, if it exists, and it can be constructed by placing entries $1,\ldots,\lambda_1$ in the bottom row, $\lambda_1+1,\ldots,\lambda_1+\lambda_2$ in the next row up, and so on. Then $i$ will be a descent of $Y$ precisely for $i=\lambda_1,\lambda_1+\lambda_2,\ldots$, giving $\Des(Y)=\lambda$. Conversely, if $\Des(T)=\lambda$, then $T$ must have descents at $i$ precisely for $i=\lambda_1,\lambda_1+\lambda_2,\ldots$. Therefore the entries $1,2,\ldots,\lambda_1$ must form a horizontal strip, and so must fill the bottom row of $\lambda$. Similarly, $\lambda_1+1,\ldots,\lambda_1+\lambda_2$ must fill the next row up, and so on, giving $T=Y$.
\end{proof}

%%%%%%%%%%%%%%%%%%%%%%%%%%%%%%%%%%%%%%%%%%%%%%%%%%%%%%%%%%%%%%%%
\subsection{Stanley symmetric functions}
%%%%%%%%%%%%%%%%%%%%%%%%%%%%%%%%%%%%%%%%%%%%%%%%%%%%%%%%%%%%%%%%
\label{sec:stanley}

A \emph{reduced expression} is a sequence $\rho = (i_k, \ldots, i_1)$ such that the permutation $s_{i_k} \cdots s_{i_1}$ has $k$ inversions, where $s_i$ is the simple transposition that interchanges $i$ and $i+1$. Define the set $\R(w)$ of reduced expressions for $w$ by
\begin{equation}
  \R(w) = \{ (i_{\inv(w)}, \ldots, i_{1}) \mid w = s_{i_{\inv(w)}} \cdots s_{i_1} \}.
\end{equation}
For example, the elements of $\R(42153)$ are shown in Figure~\ref{fig:red}.

\begin{figure}[ht]
  \begin{displaymath}
    \begin{array}{cccccc}
      (4,2,1,2,3) & (4,1,2,1,3) & (4,1,2,3,1) & (2,4,1,2,3) & (2,1,4,2,3) & (2,1,2,4,3) \\
      (1,4,2,3,1) & (1,2,4,3,1) & (1,4,2,1,3) & (1,2,4,1,3) & (1,2,1,4,3) & 
    \end{array}
  \end{displaymath}
   \caption{\label{fig:red}The set of reduced expressions for $42153$.}
\end{figure}

\begin{definition}
  The \emph{run decomposition} of a reduced expression $\rho$, denoted by $\rho^{(k)} | \cdots | \rho^{(1)}$, partitions $\rho$ into increasing sequences of maximal length. The \emph{descent composition of $\rho$}, denoted by $\Des(\rho)$, is the strong composition $(|\rho^{(1)}|,\ldots,|\rho^{(k)}|)$. 
  \label{def:Des}
\end{definition}

For example, the run decomposition of $(1,4,2,3,1)$ is $(14 | 23 | 1)$ and so $\Des(1,4,2,1,3) = (1,2,2)$. Note the reversal of lengths. 

In order to enumerate reduced expressions, Stanley defined a family of symmetric functions indexed by permutations that are the generating functions for reduced expressions.

\begin{definition}[\cite{Sta84}]
  For $w$ a permutation, the \emph{Stanley symmetric function} $\stanley_{w}$ is
  \begin{equation}
    \stanley_{w}(X) = \sum_{\rho \in \R(w)} F_{\Des(\rho)}(X),
    \label{e:stanley-F}
  \end{equation}
  where the sum is over all reduced expressions for $w$.
  \label{def:stanley-F}
\end{definition}

To avoid confusion with fundamental quasisymmetric functions, we diverge from usual notation of $F_w$ and denote the Stanley symmetric functions by $\stanley_{w}$. Also note that we follow usual conventions and have our $\stanley_w = F_{w^{-1}}$ in \cite{Sta84}.

For example, from Figure~\ref{fig:red}, we compute
\begin{eqnarray*}
  \stanley_{42153}(X) & = & F_{(3,1,1)}(X) + 2 F_{(2,2,1)}(X) + 2 F_{(1,3,1)}(X) + F_{(3,2)}(X) \\
  & & + 2 F_{(1,2,2)}(X) + F_{(1,1,3)}(X) + F_{(2,1,2)}(X) + F_{(2,3)}(X) .
\end{eqnarray*}

Not only are the Stanley symmetric functions honest symmetric functions \cite{Sta84}, Edelman and Greene \cite{EG87} showed that they are, in fact, Schur positive. For example,
\begin{displaymath}
  \stanley_{42153}(X) = s_{(3,2)}(X) + s_{(3,1,1)}(X).
\end{displaymath}
We give an independent and elementary proof of this fact using dual equivalence.

%%%%%%%%%%%%%%%%%%%%%%%%%%%%%%%%%%%%%%%%%%%%%%%%%%%%%%%%%%%%%%%%
\subsection{Dual equivalence}
%%%%%%%%%%%%%%%%%%%%%%%%%%%%%%%%%%%%%%%%%%%%%%%%%%%%%%%%%%%%%%%%
\label{sec:graphs}

Given a set of combinatorial objects $\mathcal{A}$ endowed with a notion of descents, one can form the quasisymmetric generating function for $\mathcal{A}$ by
\begin{displaymath}
  \sum_{T \in \mathcal{A}} F_{\Des(T)}(X).
\end{displaymath}
Two examples of this are Schur functions generated by standard Young tableaux \eqref{e:schur-F} and Stanley symmetric functions generated by reduced expressions \eqref{e:stanley-F}.

Dual equivalence \cite{Ass15} is a general framework for proving that such generating functions are symmetric and Schur positive. We recall the relevant definitions and theorems, then apply them to $\R(w)$ to prove that Stanley symmetric functions are symmetric and Schur positive.

\begin{definition}[\cite{Ass15}]
  Let $\mathcal{A}$ be a finite set, and $\Des$ be a map from $\mathcal{A}$ to strong compositions of $n$. A \emph{dual equivalence for $(\mathcal{A},\Des)$} is a family of involutions $\{\varphi_i\}_{1<i<n}$ on $\mathcal{A}$ such that
  \renewcommand{\theenumi}{\roman{enumi}}
  \begin{enumerate}
  \item For all $i-h \leq 3$ and all $T \in \mathcal{A}$, there exists a partition $\lambda$ of $i-h+3$ such that
    \[ \sum_{U \in [T]_{(h,i)}} F_{\Des_{(h-1,i+1)}(U)}(X) = s_{\lambda}(X), \]
    where $[T]_{(h,i)}$ is the equivalence class generated by $\varphi_h,\ldots,\varphi_i$, and $\Des_{(h,i)}(T)$ is the strong composition of $i-h+1$ obtained by deleting the first $h-1$ and last $n-i$ parts from $\Des(T)$.
    
  \item For all $|i-j| \geq 3$ and all $T \in\mathcal{A}$, we have
    \begin{displaymath}
      \varphi_{j} \varphi_{i}(T) = \varphi_{i} \varphi_{j}(T).
    \end{displaymath}

  \end{enumerate}

  \label{def:deg}
\end{definition}

For example, if $\Des(T) = (3,2,3,1)$, then we have $\Des_{(3,8)}(T) = (1,2,3)$.

Haiman \cite{Hai92} defined involutions $d_i$ on standard Young tableaux that swap $i$ with $i \pm 1$ whenever $i \mp 1$ lies in between them in the column reading word (read top to bottom from left to right). Assaf \cite{Ass15} showed that these involutions satisfy Definition~\ref{def:deg} and that any involutions satisfying Definition~\ref{def:deg} have $\Des$-isomorphic equivalence classes. In particular, we have the following.

\begin{figure}[ht]
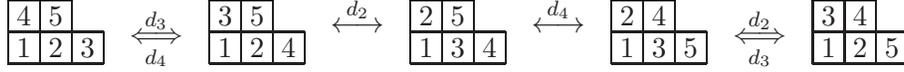

  \begin{displaymath}
    \begin{array}{ccccccccc}
      \tableau{4 & 5 \\ 1 & 2 & 3} & \raisebox{-\cellsize}{$\stackrel{\displaystyle\stackrel{d_3}{\Longleftrightarrow}}{\scriptstyle d_4}$}  &
      \tableau{3 & 5 \\ 1 & 2 & 4} & \raisebox{0\cellsize}{$\stackrel{d_2}{\longleftrightarrow}$} &                                                
      \tableau{2 & 5 \\ 1 & 3 & 4} & \raisebox{0\cellsize}{$\stackrel{d_4}{\longleftrightarrow}$} &                                                
      \tableau{2 & 4 \\ 1 & 3 & 5} & \raisebox{-\cellsize}{$\stackrel{\displaystyle\stackrel{d_2}{\Longleftrightarrow}}{\scriptstyle d_3}$} & 
      \tableau{3 & 4 \\ 1 & 2 & 5} 
    \end{array}
  \end{displaymath}
  \caption{\label{fig:SYT-deg}Dual equivalence for the standard Young tableaux for $(3,2)$.}
\end{figure}

\begin{theorem}[\cite{Ass15}]
  If $\{\varphi_i\}$ is a dual equivalence for $(\mathcal{A},\Des)$, and $U \in \mathcal{A}$, then
  \begin{equation}
    \sum_{T \in [U]} F_{\Des(T)}(X) \ = \ s_{\lambda}(X)
  \end{equation}
  for some partition $\lambda$. In particular, the fundamental quasisymmetric generating function for $\mathcal{A}$ is symmetric and Schur positive.
  \label{thm:positivity}
\end{theorem}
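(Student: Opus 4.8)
The plan is to reduce the statement to a rigidity result: every equivalence class of an abstract dual equivalence is structurally indistinguishable from a class of standard Young tableaux under Haiman's involutions, from which equality of generating functions is immediate. I would first record that the model $(\SYT(n),\Des)$ equipped with Haiman's $d_i$ is itself a dual equivalence: axiom (ii) is the easy observation that $d_i$ and $d_j$ touch disjoint sets of letters when $|i-j|\geq 3$ and hence commute, while axiom (i) is a finite check on windows spanning at most six consecutive values. For this model the conclusion is already known, since the class generated by all $d_i$ on $\SYT(\lambda)$ is exactly $\SYT(\lambda)$, which generates $s_{\lambda}$ by Definition~\ref{def:schur-F}.

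The core is therefore to show that for an arbitrary dual equivalence $\{\varphi_i\}$ and any $U\in\mathcal{A}$, the class $[U]$ admits a bijection to some $\SYT(\lambda)$ that intertwines $\varphi_i$ with $d_i$ for every $i$ and preserves $\Des$ on every element (a $\Des$-isomorphism). Since such a bijection carries $\Des(T)$ to $\Des$ of its image term by term, it forces $\sum_{T\in[U]}F_{\Des(T)}(X) = \sum_{T'\in\SYT(\lambda)}F_{\Des(T')}(X) = s_{\lambda}(X)$, which is the theorem. I would build the $\Des$-isomorphism by induction on $n$. In the base range $n\leq 6$, the window $h=2,\ i=n-1$ is admissible, so axiom (i) directly yields $\sum_{T\in[U]}F_{\Des(T)}(X) = s_{\lambda}(X)$ and pins down $\lambda$; the matching structural isomorphism to $\SYT(\lambda)$ in these small cases is a finite verification.

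For the inductive step, restrict the family to $\varphi_2,\ldots,\varphi_{n-2}$, the generators that do not move the largest letter. By induction each class of this restricted system is $\Des_{(1,n-1)}$-isomorphic to some $\SYT(\mu)$ on $n-1$ letters, so $[U]$ decomposes into layers, each identified with a standard tableau model, and it remains to understand how $\varphi_{n-1}$ stitches consecutive layers together. Axiom (ii) guarantees that $\varphi_{n-1}$ commutes with $\varphi_2,\ldots,\varphi_{n-4}$, so it respects the coarse layer structure; axiom (i), applied to the windows of width at most four that contain the index $n-1$, determines the $\varphi_{n-1}$-edges on each small piece subject to reproducing one of the permissible Schur shapes. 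Matching these local patterns against the unique way that adjoining the letter $n$ extends an $\SYT(\mu)$ to an $\SYT(\lambda)$ yields the required global identification and, in particular, the single shape $\lambda$.

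The main obstacle is precisely this gluing step: one must show that the finitely many Schur-positivity constraints in axiom (i) are rigid enough to determine $\varphi_{n-1}$ uniquely, leaving no freedom to connect the induced $\SYT(\mu)$ layers in a non-standard way. The crux is a multiplicity-one phenomenon — that within each relevant window the admissible shape $s_{\lambda}$ occurs in only one way compatible with the already-fixed $\varphi_2,\ldots,\varphi_{n-2}$ structure — so that the edge set of the component is forced rather than merely constrained. This rigidity is the technical heart of the dual equivalence machinery of \cite{Ass15}; once it is in hand, the inductive construction of the $\Des$-isomorphism, and hence the theorem, follows.
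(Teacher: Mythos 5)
Your proposal is correct and follows essentially the same route as the paper, which does not prove this theorem itself but cites \cite{Ass15} with exactly your reduction: Haiman's involutions $d_i$ make $(\SYT,\Des)$ a dual equivalence whose classes $\SYT(\lambda)$ generate $s_{\lambda}$, and the rigidity theorem of \cite{Ass15} shows every class of an abstract dual equivalence is $\Des$-isomorphic to some $\SYT(\lambda)$. Your inductive gluing sketch (restrict to $\varphi_2,\ldots,\varphi_{n-2}$, then use commutation and the local Schur-positivity windows to pin down $\varphi_{n-1}$) is precisely the structure of that cited argument, so the only deferred point --- the multiplicity-one rigidity --- is exactly what \cite{Ass15} supplies.
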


We use the defining relations for the simple transpositions that generate the symmetric group  to construct involutions on $\R(w)$ that satisfy Definition~\ref{def:deg}.

\begin{definition}
  Given $\rho \in\R(w)$ and $1 < i < \inv(w)$, define $\mathfrak{d}_i(\rho)$ by
  \begin{equation}
    \mathfrak{d}_i (\rho) = \left\{ \begin{array}{rl}
      \braid_i   (\rho) & \mbox{if } \rho_{i+1} = \rho_{i-1} = \rho_{i} \pm 1 \\
      \swap_{i-1}(\rho) & \mbox{if } \rho_{i-1} > \rho_{i+1} > \rho_{i} \mbox{ or } \rho_{i-1} < \rho_{i+1} < \rho_{i}, \\
      \swap_{i}  (\rho) & \mbox{if } \rho_{i+1} > \rho_{i-1} > \rho_{i}  \mbox{ or } \rho_{i+1} < \rho_{i-1} < \rho_{i}, \\
      \rho & \mbox{otherwise},
    \end{array} \right.
  \end{equation}
  where $\braid_j$ changes $\rho_{j-1} \rho_j \rho_{j-1}$ to $\rho_{j} \rho_{j-1} \rho_{j}$; and $\swap_j$ interchanges $\rho_j$ and $\rho_{j+1}$.
  \label{def:dual-stanley}
\end{definition}

For example, one of the two dual equivalences class of $\R(42153)$ is shown in Figure~\ref{fig:dual}. Observe that the generating function for this class is the Schur function $s_{(3,2)}(X)$, picking off one term in the expansion $\stanley_{42153}(X) = s_{(3,2)}(X) + s_{(3,1,1)}(X)$.

\begin{figure}[ht]
  \begin{displaymath}
    (1,2,4,1,3) \stackrel{\displaystyle\stackrel{\mathfrak{d}_3}{\Longleftrightarrow}}{\scriptstyle\mathfrak{d}_4} 
    (1,2,1,4,3) \stackrel{\mathfrak{d}_2}{\longleftrightarrow}                                                     
    (2,1,2,4,3) \stackrel{\mathfrak{d}_3}{\longleftrightarrow}                                                     
    (2,1,4,2,3) \stackrel{\displaystyle\stackrel{\mathfrak{d}_2}{\Longleftrightarrow}}{\scriptstyle\mathfrak{d}_3} 
    (2,4,1,2,3) 
  \end{displaymath}
  \caption{\label{fig:dual}Examples of dual equivalence on $\R(42153)$.}
\end{figure}

\begin{remark}
  Note that if $\rho$ contains distinct indices, then changing those indices, in order, to $1\ldots n$ and taking the inverse of the resulting permutation gives a bijection, say $\theta$, with permutations, and we have $\theta(\mathfrak{d}_i(\rho)) = d_i(\theta(\rho))$, where $d_i$ is Haiman's dual equivalence involutions on permutations. Therefore, in this case, it follows from \cite{Ass15} that the maps $\mathfrak{d}_i$ give a dual equivalence for $\R(w)$. 
\end{remark}
  
\begin{theorem}
  The maps $\{\mathfrak{d}_i\}$ give a dual equivalence for $\R(w)$. In particular, Stanley symmetric functions are symmetric and Schur positive.
  \label{thm:deg-red}
\end{theorem}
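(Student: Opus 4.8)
The plan is to verify that the maps $\{\mathfrak{d}_i\}$ satisfy the two conditions of Definition~\ref{def:deg}, after which Theorem~\ref{thm:positivity} applied to $(\R(w),\Des)$ immediately yields that $\stanley_w = \sum_{\rho\in\R(w)} F_{\Des(\rho)}$ is symmetric and Schur positive. The key observation driving the whole argument is the bijection $\theta$ described in the Remark: when a reduced expression $\rho$ happens to have all distinct entries, relabeling its entries by their relative order to $1,\ldots,n$ and inverting gives a permutation, and under $\theta$ the involution $\mathfrak{d}_i$ corresponds exactly to Haiman's involution $d_i$ on standard Young tableaux (equivalently on permutations). Since Haiman's involutions are already known from \cite{Ass15} to form a dual equivalence, this handles any situation in which only distinct entries are involved. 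The real content of the theorem is therefore to reduce the general case, where $\rho$ may repeat indices, to this known local model.

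First I would observe that both conditions of Definition~\ref{def:deg} are \emph{local}: condition (i) only examines the equivalence class generated by $\varphi_h,\ldots,\varphi_i$ with $i-h\le 3$, so it involves at most the window of entries $\rho_{h-1},\ldots,\rho_{i+1}$, and condition (ii) is a commutation that depends only on the four entries $\rho_{i-1},\rho_i,\rho_{i+1}$ and $\rho_{j-1},\rho_j,\rho_{j+1}$ in two separated windows. Because $\rho$ is a \emph{reduced} expression, I would next record the crucial structural fact that within any window of three consecutive entries $\rho_{i-1}\rho_i\rho_{i+1}$, the multiset of values is constrained: the only way two of these three can be equal is the braid pattern $a\,(a\pm1)\,a$ handled by $\braid_i$ (the pattern $a\,b\,a$ with $|a-b|\ge 2$ is not reduced, and $a\,a$ in adjacent positions never occurs in a reduced word). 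All other windows have three values that are either mutually non-adjacent (where $\mathfrak{d}_i$ acts as the identity by the ``otherwise'' clause) or contain a genuine inequality pattern matching one of the $\swap$ cases. Thus in every case the triple of values appearing in the relevant window is, up to the equality forced by a braid move, order-isomorphic to three distinct integers, and I can invoke the distinct-entry model via $\theta$ locally.

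The argument for condition (ii) is the easier one: when $|i-j|\ge 3$ the windows $\{i-1,i,i+1\}$ and $\{j-1,j,j+1\}$ are disjoint, so $\mathfrak{d}_i$ and $\mathfrak{d}_j$ modify disjoint sets of positions and hence commute; I would only need to check that applying one does not create or destroy the pattern the other is reading, which again holds because the patterns depend solely on their own three positions. The harder part, and the main obstacle, is condition (i), specifically confirming that each $\varphi_h,\ldots,\varphi_i$-equivalence class generates a genuine Schur function $s_\lambda$ of the correct size $i-h+3$ on the \emph{restricted} descent data $\Des_{(h-1,i+1)}$. Here I would enumerate the finitely many local configurations of a reduced word on a window of size at most five, check that the braid relation $\braid$ and the two swaps $\swap_{i-1},\swap_i$ reproduce the local edge structure of Haiman's dual equivalence graph on $\SYT$, and confirm that the descent compositions match. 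The subtlety requiring care is the braid case: unlike a pure swap of distinct values, the braid move $a(a{\pm}1)a \mapsto (a{\pm}1)a(a{\pm}1)$ changes which entries are equal, so I must verify that its effect on the descent composition $\Des(\rho)$ (via the run decomposition of Definition~\ref{def:Des}) exactly mirrors the descent change effected by the corresponding $d_i$ on tableaux. Once these local checks are assembled, both axioms hold and Theorem~\ref{thm:positivity} completes the proof.
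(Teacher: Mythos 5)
Your skeleton is the same as the paper's: establish the two axioms of Definition~\ref{def:deg} --- the commutation axiom by disjointness of the windows $\{i-1,i,i+1\}$ and $\{j-1,j,j+1\}$, and the local Schur-class axiom by a finite check of configurations (by hand for $i-h=0$, by hand or computer for $i-h=1,2,3$) --- and then invoke Theorem~\ref{thm:positivity}. However, there is a genuine omission: Definition~\ref{def:deg} requires the $\mathfrak{d}_i$ to be \emph{involutions on $\R(w)$}, and you never verify that $\mathfrak{d}_i(\rho)$ is again a reduced word for the same permutation $w$, nor that $\mathfrak{d}_i^2=\id$. This is the opening third of the paper's proof. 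The point is that in each swap case the two exchanged letters automatically differ by at least $2$, because the third letter lies strictly between them (e.g.\ $\rho_{i-1}>\rho_{i+1}>\rho_i$ forces $\rho_{i-1}-\rho_i\ge 2$), so $\swap$ applies a legal commutation relation and preserves reducedness; the braid case is legal by its hypothesis; and each case's defining condition is carried to itself by the corresponding move, which is what makes $\mathfrak{d}_i$ square to the identity.

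Separately, the structural fact you ``record'' is false as stated: you claim that, away from the braid pattern, $\mathfrak{d}_i$ acts as the identity exactly when the three values are mutually non-adjacent, and acts by a swap otherwise. In fact the ``otherwise'' clause of Definition~\ref{def:dual-stanley} fires exactly when $\rho_i$ is the \emph{median} of the three distinct values, i.e.\ when the window is monotone; adjacency of values is irrelevant. For instance $(\rho_{i-1},\rho_i,\rho_{i+1})=(1,5,3)$ has pairwise non-adjacent values yet $\mathfrak{d}_i=\swap_{i-1}$, while $(1,2,3)$ consists of adjacent values and is a fixed point. Getting this trichotomy right is exactly what makes the $i-h=0$ analysis work: the fixed points are the monotone (strictly increasing or decreasing) windows, contributing $s_{(3)}$ or $s_{(1,1,1)}$, and the nontrivial orbits pair $acb\leftrightarrow cab$, $bac\leftrightarrow bca$, $aba\leftrightarrow bab$ with $a<b<c$, each contributing $F_{(2,1)}+F_{(1,2)}=s_{(2,1)}$. (Also, the relevant windows for $i-h\le 3$ have $i-h+3\le 6$ entries, not five, and the finite verification can be run over the alphabet $\{1,\ldots,2(i-h+3)\}$ since only the relations $|a-b|=0,1$ or $|a-b|>1$ matter.) Your fallback of exhaustive enumeration would still succeed and is what the paper itself does, but as written your proof rests on an incorrect description of when the maps act trivially.
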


\begin{proof}
  Both swaps, $\swap_i$, and braids, $\braid_i$, are themselves involutions on $\R(w)$ provided they apply only when the corresponding relation is valid for the simple transpositions $s_i$, namely for $\swap_i$ whenever $|\rho_i - \rho_{i+1}|>1$ and for $\braid_i$ whenever $\rho_{i-1}=\rho_{i+1} = \rho_i \pm 1$. The definition of $\mathfrak{d}_i$ clearly ensures this. The conditions for when to apply a swap or a braid are maintained by the swap or braid, ensuring that $\mathfrak{d}_i$ is an involution. Note that $\mathfrak{d}_i$ changes $\rho$ only at indices $i-1,i,i+1$, and the conditions that determine which swap or braid to apply look only at these indices. Therefore, if $|i-j|\geq 3$, then $\{i-1,i,i+1\}$ and $\{j-1,j,j+1\}$ are disjoint, the maps $\mathfrak{d}_i$ and $\mathfrak{d}_j$ commute.

  It remains to consider restricted dual equivalence classes under $\mathfrak{d}_h,\ldots,\mathfrak{d}_i$ for $i-h \leq 3$. For $i-h=0$, $\mathfrak{d}_i$ acts trivially if and only if $\rho_{i-1}\rho_{i}\rho_{i+1}$ is weakly increasing or weakly decreasing. Since, by the reduced condition, consecutive letters may not be equal, the sequence must be strict, and so the descent composition is $(3)$ (to get $s_{(3)}(X)$) or $(1,1,1)$ (to get $s_{(1,1,1)}(X)$). If $\mathfrak{d}_i$ acts nontrivially, then it pairs $acb$ with $cab$ (if $\mathfrak{d}_i = \swap_{i-1}$) or $bac$ with $bca$ (if $\mathfrak{d}_i = \swap_{i}$) or $aba$ with $bab$ (if $\mathfrak{d}_i = \braid_{i}$), where $a<b<c$. In all cases, the two descent compositions are $(2,1)$ and $(1,2)$ giving $s_{(2,1)}(X)$. A similar by hand analysis can handle the cases $i-h=1,2,3$, though it is more efficient (and perhaps less error-prone) to program the involutions on a computer and check them for all reduced words on $\{1,\ldots,2(i-h+3)\}$, since we need only separate the cases $|a-b|=0,1$ or $|a-b|>1$.
\end{proof}

A dual equivalence $\{\varphi_i\}$ for $(\mathcal{A},\Des)$ induces a $\Des$-preserving map $\Phi$ from $\mathcal{A}$ to $\SYT$ such that, for any dual equivalence class $\mathcal{C}$ for $\mathcal{A}$ under $\{\varphi_i\}$, the restriction of $\Phi$ to $\mathcal{C}$ gives a bijection with $\SYT(\lambda)$ for some (unique) partition $\lambda$.

\begin{definition}
  Given a dual equivalence $\{\varphi_i\}$ for $(\mathcal{A},\Des)$, we call the induced map $\Phi:\mathcal{A}\rightarrow\SYT$ the \emph{rectification map for $\mathcal{A}$ with respect to $\{\varphi_i\}$}. For $A \in\mathcal{A}$, we say that $A$ \emph{rectifies} to $\Phi(A)$.
  \label{def:rectification}
\end{definition}
  
For example, for reduced expressions for $42153$, the reduced expressions in Figure~\ref{fig:dual} rectify to the standard Young tableaux in Figure~\ref{fig:SYT-deg}, respectively.

\begin{definition}
  Given a permutation $w$, say that a reduced expression $\rho\in\R(w)$ is \emph{super-standard} if it rectifies to a super-standard tableau.
  \label{def:super-red}
\end{definition}
  
For example, the super-standard reduced expressions for $42153$ are shown in Figure~\ref{fig:super-red}.

\begin{figure}[ht]
  \begin{displaymath}
    \begin{array}{cccc@{\hskip 4em}ccc}
      (1,2,4,3,1) & \stackrel{\Phi}{\longrightarrow} & \raisebox{\cellsize}{$\tableau{5 \\ 4 \\ 1 & 2 & 3}$} & &
      (1,2,4,1,3) & \stackrel{\Phi}{\longrightarrow} & \tableau{4 & 5 \\ 1 & 2 & 3}
    \end{array}
  \end{displaymath}
  \caption{\label{fig:super-red}The super-standard elements of $\R(42153)$ and their rectifications.}
\end{figure}

In particular, combining Theorems~\ref{thm:deg-red} and \ref{thm:positivity} with Proposition~\ref{prop:super-SYT}, we have a simple, combinatorial proof of the following.

\begin{corollary}
  For $w$ a permutation, we have
  \begin{equation}
    \stanley_w = \sum_{\substack{\rho\in\R(w) \\ \rho \ \mathrm{super-standard}}} s_{\Des(\rho)} = \sum_{\lambda} c_{w,\lambda} s_{\lambda},
  \end{equation}
  where $c_{w,\lambda}$ is the number of super-standard reduced expressions for $w$ with descent composition $\lambda$.
  \label{cor:stanley-pos}
\end{corollary}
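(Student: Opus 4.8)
The plan is to combine the dual equivalence established in Theorem~\ref{thm:deg-red} with the rectification map of Definition~\ref{def:rectification} and the characterization of super-standard tableaux in Proposition~\ref{prop:super-SYT}. First I would partition $\R(w)$ into its dual equivalence classes under $\{\mathfrak{d}_i\}$. By Theorem~\ref{thm:positivity}, each class $\mathcal{C}$ contributes a single Schur function $\sum_{\rho\in\mathcal{C}} F_{\Des(\rho)}(X) = s_{\lambda_{\mathcal{C}}}(X)$ for some partition $\lambda_{\mathcal{C}}$, and summing over all classes gives $\stanley_w = \sum_{\mathcal{C}} s_{\lambda_{\mathcal{C}}}$. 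The remaining task is purely to identify each label $\lambda_{\mathcal{C}}$ combinatorially, that is, to show it is computed by a super-standard representative.

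For this I would invoke the rectification map $\Phi$, which by the discussion following Theorem~\ref{thm:positivity} restricts on each class $\mathcal{C}$ to a $\Des$-preserving bijection $\Phi\colon\mathcal{C}\to\SYT(\lambda_{\mathcal{C}})$. By Proposition~\ref{prop:super-SYT}, the target $\SYT(\lambda_{\mathcal{C}})$ contains a unique super-standard tableau $Y$, and moreover $Y$ is the unique element of $\SYT(\lambda_{\mathcal{C}})$ whose descent composition equals the full partition $\lambda_{\mathcal{C}}$. Pulling this back through the bijection $\Phi$, the class $\mathcal{C}$ contains a unique super-standard reduced expression $\rho_{\mathcal{C}} = \Phi^{-1}(Y)$ (this uses Definition~\ref{def:super-red}), and since $\Phi$ preserves descents we have $\Des(\rho_{\mathcal{C}}) = \Des(Y) = \lambda_{\mathcal{C}}$. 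Thus the Schur label of each class is read off directly as the descent composition of its unique super-standard representative.

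Substituting $\lambda_{\mathcal{C}} = \Des(\rho_{\mathcal{C}})$ and observing that the super-standard reduced expressions in $\R(w)$ are exactly the representatives $\rho_{\mathcal{C}}$, one per class, I obtain $\stanley_w = \sum_{\mathcal{C}} s_{\Des(\rho_{\mathcal{C}})}$, which is the stated sum over super-standard reduced expressions; collecting terms by the value $\lambda = \Des(\rho)$ then yields $\sum_{\lambda} c_{w,\lambda}\, s_{\lambda}$ with $c_{w,\lambda}$ counting super-standard reduced expressions of descent composition $\lambda$. I do not anticipate a genuine obstacle, since every ingredient is already in place; the one point requiring care is confirming that Proposition~\ref{prop:super-SYT} delivers \emph{exactly one} super-standard element per class, with existence coming from the surjectivity of $\Phi$ onto $\SYT(\lambda_{\mathcal{C}})$ and uniqueness from its injectivity together with the ``only if'' direction of that proposition, so that the middle sum is genuinely indexed by the super-standard reduced expressions without any overcounting.
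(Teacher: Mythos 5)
Your proposal is correct and follows exactly the route the paper intends: the paper derives this corollary by ``combining Theorems~\ref{thm:deg-red} and \ref{thm:positivity} with Proposition~\ref{prop:super-SYT},'' and your argument is precisely that combination, made explicit via the rectification map $\Phi$ of Definition~\ref{def:rectification} and the uniqueness of the super-standard representative in each class. There is no gap; your careful point about existence and uniqueness of the super-standard element per class is exactly what the paper's cited ingredients deliver.
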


%%%%%%%%%%%%%%%%%%%%%%%%%%%%%%%%%%%%%%%%%%%%%%%%%%%%%%%%%%%%%%%%
%
\section{Key positivity of Schubert polynomials}
%
%%%%%%%%%%%%%%%%%%%%%%%%%%%%%%%%%%%%%%%%%%%%%%%%%%%%%%%%%%%%%%%%
\label{sec:polynomials}

%%%%%%%%%%%%%%%%%%%%%%%%%%%%%%%%%%%%%%%%%%%%%%%%%%%%%%%%%%%%%%%%
\subsection{Schubert polynomials}
%%%%%%%%%%%%%%%%%%%%%%%%%%%%%%%%%%%%%%%%%%%%%%%%%%%%%%%%%%%%%%%%
\label{sec:schubert}

Lascoux and Sch{\"u}tzenberger \cite{LS82} defined polynomial representatives for the Schubert classes in the cohomology ring of the complete flag variety. The importance of these \emph{Schubert polynomials} lies in the fact that their structure constants give intersection multiplicities for the corresponding varieties. Finding a combinatorial rule to compute these number remains one of the fundamental open problems in algebraic combinatorics. We refer the reader to \cite{Mac91} for a beautiful and thorough treatment of the underlying combinatorics of Schubert polynomials, insofar as it is understood.

As with the Schur case, we will harness the power of another basis, in this case the \emph{fundamental slide basis} \cite{AS17} of Assaf and Searles, to express Schubert polynomials as the generating function for reduced expressions.

\begin{definition}[\cite{AS17}]
  For a weak composition $a$ of length $n$, define the \emph{fundamental slide polynomial} $\fund_{a} = \fund_{a}(x_1,\ldots,x_n)$ by
  \begin{equation}
    \fund_{a} = \sum_{\substack{b \geq a \\ \flatten(b) \ \mathrm{refines} \ \flatten(a)}} x_1^{b_1} \cdots x_n^{b_n},
    \label{e:fundamental-shift}
  \end{equation}
  where $b \geq a$ means $b_1 + \cdots + b_k \geq a_1 + \cdots + a_k$ for all $k=1,\ldots,n$.
  \label{def:fundamental-shift}
\end{definition}

For example, we have
\begin{displaymath}
  \fund_{(0,3,2)} = x^{032} + x^{122} + x^{212} + x^{302} + x^{311} + x^{320}.
\end{displaymath}

Whereas fundamental quasisymmetric functions are indexed by strong compositions, fundamental slide polynomials are indexed by weak compositions, so we require a weak descent composition to define generating functions with respect to this basis. We adopt the following from \cite{Ass-1}.

\begin{definition}[\cite{Ass-1}]
    For a reduced expression $\rho$ with run decomposition $(\rho^{(k)} | \cdots | \rho^{(1)})$, set $r_k = \rho^{(k)}_1$ and, for $i<k$, set $r_i = \min(\rho^{(i)}_1,r_{i+1}-1)$. Define the \emph{weak descent composition of $\rho$}, denoted by $\des(\rho)$, by $\des(\rho)_{r_i} = |\rho^{(i)}|$ and all other parts are zero if all $r_i>0$ and $\des(\rho) = \varnothing$ otherwise.
  \label{def:des-red}
\end{definition}

For example, among the reduced expressions in Figure~\ref{fig:red}, all but the first and fourth in the top row are virtual, and these have weak descent compositions $(3,1,0,1)$ and $(3,2,0,0)$, respectively. To facilitate virtual objects, we extend notation and set
\begin{equation}
  \fund_{\varnothing} = 0.
\end{equation}

Building on the monomial model given by Billey, Jockusch, and Stanley \cite{BJS93}, Assaf \cite{Ass-1} gave the following expansion of Schubert polynomials in terms of fundamental slide polynomials, which we take as our definition.

\begin{definition}[\cite{Ass-1}]
  For $w$ any permutation, we have
  \begin{equation}
    \schubert_{w} = \sum_{\rho \in \R(w)} \fund_{\des(P)},
    \label{e:schubert-slide}
  \end{equation}
  where the sum may be taken over non-virtual reduced expressions $\rho$.
  \label{def:schubert-slide}
\end{definition}

There is a special case worth mentioning, that of \emph{grassmannian permutations} which are permutations with at most one descent. Given a partition $\lambda$ of length $j$ and a positive integer $k \geq j$, the \emph{grassmannian permutation associated to $\lambda$ and $k$}, denoted by $v(\lambda,k)$, is given by
\begin{equation}
  v(\lambda,k)_i = i + \lambda_{k-i+1}
  \label{e:grass}
\end{equation}
for $i = 1,\ldots,k$, where we take $\lambda_i=0$ for $i>j$, and $v(\lambda,k)$ has a unique descent at $k$. For example,
\begin{displaymath}
  \begin{array}{rcrrrrrrrrrrr}
    & & 0 & 0 & 1 & 4 & 4 & 5 \ \vline & & & & & \\\cline{3-13}
    v((5,4,4,1), 6) & = & 1 & 2 & 4 & 8 & 9 & 1\!1 \ \vline & 3 & 5 & 6 & 7 & 1\!0 . 
  \end{array}
\end{displaymath}
It is easy to see that $v(\lambda,k)$ gives a bijection between grassmannian permutations with unique descent at $k$ and partitions of length at most $k$. Moreover, we have the following.

\begin{theorem}[\cite{LS82}]
  For $\lambda$ a partition and $k$ a positive integer, we have
  \begin{equation}
    \schubert_{v(\lambda,k)} = s_{\lambda}(x_1,\ldots,x_k).
  \end{equation}
\end{theorem}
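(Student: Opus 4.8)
The plan is to prove the identity by passing to its stable (symmetric-function) shadow, where the machinery of Section~\ref{sec:symmetric} applies, and then to recover the polynomial exactly by a symmetry argument. The first step is to show $\stanley_{v(\lambda,k)} = s_\lambda$. Since $v(\lambda,k)$ has its unique descent at $k$, it is $321$-avoiding and hence fully commutative, so all of its reduced words are connected by commutations and are the linear extensions of a single heap poset whose underlying shape is $\lambda$. I would use this (or simply cite Stanley's classical count $|\R(v(\lambda,k))| = f^\lambda$) to produce a $\Des$-preserving bijection $\R(v(\lambda,k)) \leftrightarrow \SYT(\lambda)$; alternatively, feeding $\R(v(\lambda,k))$ into Theorem~\ref{thm:deg-red} and Corollary~\ref{cor:stanley-pos}, it suffices to check that $v(\lambda,k)$ has a unique super-standard reduced word and that its descent composition is $\lambda$. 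Either route gives $\stanley_{v(\lambda,k)} = \sum_{T \in \SYT(\lambda)} F_{\Des(T)} = s_\lambda$.

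Next I would link the two generating functions. By Definition~\ref{def:des-red} the parts of $\des(\rho)$, read in increasing order of position, are exactly the parts of $\Des(\rho)$, so $\flatten(\des(\rho)) = \Des(\rho)$. Consequently the slide expansion $\schubert_{v(\lambda,k)} = \sum_\rho \fund_{\des(\rho)}$ is the polynomial lift of the quasisymmetric expansion $\stanley_{v(\lambda,k)} = \sum_\rho F_{\Des(\rho)}$: Macdonald's stabilization $\schubert_{1^m \times v(\lambda,k)} \to \stanley_{v(\lambda,k)}$, which on the combinatorial side prepends zero parts to each $\des(\rho)$, carries $\fund_{\des(\rho)} \mapsto F_{\Des(\rho)}$. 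Thus the stable limit of $\schubert_{v(\lambda,k)}$ is $s_\lambda$, which determines the polynomial up to terms that vanish under stabilization.

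To remove that ambiguity I would show $\schubert_{v(\lambda,k)}$ is a symmetric polynomial in $x_1,\ldots,x_k$ that involves no other variables. The second part follows by analyzing the flagging $r_i = \min(\rho^{(i)}_1, r_{i+1}-1)$ of Definition~\ref{def:des-red}: for a word of $v(\lambda,k)$ it never places a nonzero part beyond position $k$, and it returns the virtual symbol (hence contributes $\fund_\varnothing = 0$) exactly when the descent composition is too long to be realized within the first $k$ variables. For symmetry I would invoke the standard property of Schubert polynomials that $\schubert_w$ is symmetric in $x_i,x_{i+1}$ precisely when $w$ has no descent at $i$; as $v(\lambda,k)$ has descents only at $k$, the polynomial is invariant under $s_1,\ldots,s_{k-1}$, hence under all of $S_k$. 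Since the restriction $\Lambda \to \mathbb{Z}[x_1,\ldots,x_k]^{S_k}$ sends $\{s_\mu : \ell(\mu)\le k\}$ to a basis and annihilates $s_\mu$ for $\ell(\mu) > k$, a symmetric polynomial in $x_1,\ldots,x_k$ whose stabilization is $s_\lambda$ with $\ell(\lambda)\le k$ must equal $s_\lambda(x_1,\ldots,x_k)$, finishing the proof.

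The main obstacle is the symmetry claim, which I have stated using a property of Schubert polynomials external to the slide-polynomial framework of this paper. To keep the argument self-contained I would instead prove a flagging lemma directly: under the bijection $\rho \leftrightarrow T$ of the first step, $\des(\rho)$ is the canonical right-justified flagging of $\Des(T)$ into the variables $x_1,\ldots,x_k$, and $\sum_{T\in\SYT(\lambda)}\fund_{\des(\rho)}$ equals $s_\lambda(x_1,\ldots,x_k)$ by a weight-preserving bijection between the monomials on the left and the semistandard Young tableaux of shape $\lambda$ with entries in $\{1,\ldots,k\}$. Verifying that the flagging positions come out exactly right — so that no monomial is over- or under-counted and the virtual reduced words match precisely the tableaux whose descent composition cannot fit in $k$ variables — is where the genuine combinatorial work lies.
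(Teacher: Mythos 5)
The paper does not actually prove this theorem: it is quoted from Lascoux--Sch\"utzenberger \cite{LS82} (classically it follows from the divided-difference definition, which produces the bialternant form of the Schur polynomial for a Grassmannian permutation). So your proposal has to stand entirely on its own, and as written it has a genuine gap at its final step. The deduction ``$\schubert_{v(\lambda,k)}$ is symmetric in $x_1,\ldots,x_k$, involves no other variables, and has stable limit $s_\lambda$, hence equals $s_\lambda(x_1,\ldots,x_k)$'' treats stabilization as an operation on the polynomial, inverse to the restriction $\Lambda \to \mathbb{Z}[x_1,\ldots,x_k]^{S_k}$. But $\lim_m \schubert_{1^m\times w}$ is defined through the indexing permutation, not through the polynomial: the limit alone does not determine $\schubert_w$ (e.g.\ $\schubert_{21}=x_1$ and $\schubert_{132}=x_1+x_2$ both stabilize to $s_{(1)}$), and nothing you have established connects the polynomial $\schubert_{v(\lambda,k)}$ to the limit of the \emph{other} polynomials $\schubert_{1^m\times v(\lambda,k)}$. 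The bridge you need is an identity such as $\schubert_{1^m\times w}(0^m,x_1,x_2,\ldots)=\schubert_w(x_1,x_2,\ldots)$ --- provable from Definition~\ref{def:schubert-slide}, since prepending $1^m$ sends each nonvirtual $\des(\rho)$ to $0^m\times\des(\rho)$ while any newly nonvirtual word acquires a nonzero part in a position at most $m$ and so dies under the specialization --- combined with full symmetry of $\schubert_{v(\lambda,k+m)}$ in $x_1,\ldots,x_{k+m}$, so that coefficients of partition monomials in the trailing $k$ variables (which compute $\schubert_{v(\lambda,k)}$) can be transported to the leading variables, where Macdonald's coefficientwise convergence applies. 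Without some such argument the step is circular: the compatibility ``restriction of $\stanley_w$ to $k$ variables equals $\schubert_w$'' is precisely the theorem, and it holds for no permutations other than the Grassmannian ones you are trying to treat. Note also that no term-by-term matching can substitute for this, since slide terms and quasisymmetric terms genuinely differ: $\fund_{(2,0)}=x_1^2$ while $F_{(2)}(x_1,x_2)=x_1^2+x_1x_2+x_2^2$.

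The other ingredients are essentially sound, though partly deferred. That $\stanley_{v(\lambda,k)}=s_\lambda$ is classical and can also be extracted from Corollary~\ref{cor:stanley-pos} with some work; the symmetry criterion via descents of $w$ is true but external to this paper's slide-polynomial definition of $\schubert_w$ (you acknowledge this); and the support claim is correct and easy to complete, because under the paper's conventions the leftmost letter of any $\rho\in\R(v(\lambda,k))$ must be a descent position of $v(\lambda,k)$, hence equals $k$, forcing every $r_i\le k$ in Definition~\ref{def:des-red}. Your fallback plan --- the ``flagging lemma'' identifying $\sum_{\rho}\fund_{\des(\rho)}$ with $s_\lambda(x_1,\ldots,x_k)$ via a bijection with semistandard tableaux with entries at most $k$ --- would indeed close the gap and is essentially the Billey--Jockusch--Stanley argument specialized to Grassmannian permutations, but you explicitly defer exactly that verification, and it is where the entire content of the theorem lies. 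Within this paper's own framework, a cleaner self-contained route is available: show that $\R(v(\lambda,k))$ contains a unique yamanouchi reduced expression and that its weak descent composition is $a(\lambda,k)$; then Theorem~\ref{thm:sch-key} gives $\schubert_{v(\lambda,k)}=\key_{a(\lambda,k)}$, which equals $s_\lambda(x_1,\ldots,x_k)$ by the corollary following Proposition~\ref{prop:key-limit}.
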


Therefore the Schubert polynomials contain the Schur polynomials as a special case. However, we argue that Schubert polynomials more closely parallel Stanley symmetric functions than they do Schur functions, noting that the latter are also a special case of the former.

Let $1^m \times w$ denote the permutation obtained by adding $m$ to all values of $w$ in one-line notation and pre-pending $1,2,\ldots,m$. Note that the reduced expressions for $1^m \times w$ are simply those for $w$ with each index increased by $m$. Let $0^m \times a$ denote the weak composition obtained by pre-pending $m$ zeros to $a$. Then for $\rho \in R(w)$ non-virtual, the corresponding reduced expression for $R(1^m \times w)$ will have weak descent composition $0^m \times \des(\rho)$. To make our running example slightly more interesting, consider $1 \times 42153 = 153264$. From Figure~\ref{fig:red-nv}, we have
\begin{displaymath}
  \schubert_{153264} = \fund_{(0,3,1,0,1)} + \fund_{(2,2,0,0,1)} + \fund_{(1,3,0,0,1)} + \fund_{(0,3,2,0,0)} + \fund_{(2,2,1,0,0)} + \fund_{(1,3,1,0,0)} + \fund_{(2,3,0,0,0)}.
\end{displaymath}

\begin{figure}[ht]
  \begin{displaymath}
    \begin{array}{ccccccc}
      (5,3,2,3,4) & (5,2,3,2,4) & (5,2,3,4,2) & (3,5,2,3,4) & (3,2,5,3,4) & (3,2,3,5,4) & (2,3,5,2,4) 
    \end{array}
  \end{displaymath}
   \caption{\label{fig:red-nv}The set of non-virtual reduced expressions for $153264$.}
\end{figure}

Note that the fundamental slide expansion of $\schubert_{1^m \times 42153}$ gains no additional terms when $m$ is at least two. Macdonald \cite{Mac91} explained this stability phenomenon by showing that Stanley symmetric functions are the \emph{stable limits} of Schubert polynomials. 

\begin{proposition}[\cite{Mac91}]
  For $w$ a permutation, we have
  \begin{equation}
    \lim_{m \rightarrow \infty} \schubert_{1^m \times w} = \stanley_{w}(X).
  \end{equation}
  \label{prop:schub-limit}
\end{proposition}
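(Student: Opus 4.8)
The plan is to reduce the statement to a single stabilization property of fundamental slide polynomials: that prepending enough zeros to the index of $\fund$ recovers a fundamental quasisymmetric function. Comparing Definition~\ref{def:stanley-F} with Definition~\ref{def:schubert-slide}, both $\stanley_w$ and $\schubert_{1^m\times w}$ are generating functions indexed by reduced expressions, the former weighting $\rho\in\R(w)$ by $F_{\Des(\rho)}$ and the latter weighting reduced expressions of $1^m\times w$ by $\fund$ of a weak descent composition. Since the reduced expressions of $1^m\times w$ are just those of $w$ with indices shifted, the whole statement should follow termwise once I match these weights in the limit.

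First I would set up the bijection $\rho\mapsto\rho^{+m}$ from $\R(w)$ to $\R(1^m\times w)$ obtained by raising every index by $m$. Adding a constant to every entry preserves the ascent/descent pattern, hence the run decomposition: $\rho^{+m}$ has runs $\rho^{(i)}+m$ with $|\rho^{(i)}+m|=|\rho^{(i)}|$. Tracking Definition~\ref{def:des-red}, a short induction gives $r_i(\rho^{+m})=r_i(\rho)+m$, so for $m$ large enough that all these indices are positive (possible since $\R(w)$ is finite), every $\rho^{+m}$ is non-virtual and $\des(\rho^{+m})$ places part $|\rho^{(i)}|$ in position $r_i(\rho)+m$. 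Consequently raising $m$ by one merely prepends a zero, so $\des(\rho^{+(m+j)})$ is $\des(\rho^{+m})$ with $j$ leading zeros adjoined, and in every case $\flatten(\des(\rho^{+m}))=\Des(\rho)$, since the parts $|\rho^{(1)}|,\ldots,|\rho^{(k)}|$ sit in strictly increasing positions.

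The crux is the lemma $\lim_{j\to\infty}\fund_{0^{j}\times a}=F_{\flatten(a)}(X)$, which I would prove by fixing a monomial $x^c$ and showing its coefficient stabilizes. By Definition~\ref{def:fundamental-shift}, $x^c$ occurs in $\fund_{0^{j}\times a}$ exactly when $\flatten(c)$ refines $\flatten(a)$ and $c\geq 0^{j}\times a$, whereas it occurs in $F_{\flatten(a)}$ exactly when $\flatten(c)$ refines $\flatten(a)$; so it suffices to see the dominance condition evaporates. The first $j$ partial sums of $0^{j}\times a$ vanish and impose nothing, and for an index $k$ past the support of $c$ (say the support lies in the first $N$ positions and $j\geq N$) the left partial sum is the total degree $|c|$ while the right side is at most $|a|$. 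Because refinement preserves total size, $|c|=|\flatten(c)|=|\flatten(a)|=|a|$, so the inequality holds automatically; hence for $j\geq N$ the coefficient of $x^c$ agrees in $\fund_{0^{j}\times a}$ and $F_{\flatten(a)}$.

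Finally I would assemble the pieces: for $m$ in the stable range and any $j\geq 0$,
\[ \schubert_{1^{m+j}\times w}=\sum_{\rho\in\R(w)}\fund_{0^{j}\times\des(\rho^{+m})}, \]
and letting $j\to\infty$ and applying the lemma termwise yields $\sum_{\rho\in\R(w)}F_{\flatten(\des(\rho^{+m}))}=\sum_{\rho\in\R(w)}F_{\Des(\rho)}=\stanley_w(X)$. I expect the main obstacle to be the stabilization lemma, and within it the clean observation that the dominance constraint disappears precisely because refinement is degree-preserving; the run-decomposition bookkeeping showing that $1^m\times$ only shifts the cut-points $r_i$ (and hence prepends zeros to $\des$) is routine but must be done with care so that the flattening is invariably $\Des(\rho)$.
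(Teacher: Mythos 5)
Your proposal is correct, and its skeleton is exactly the argument the paper sketches around this proposition: the paper states the result as a citation to Macdonald, but its surrounding text derives it from the same three ingredients you use --- reduced expressions for $1^m\times w$ are those of $w$ with indices raised by $m$, the corresponding weak descent compositions are $0^m\times\des(\rho)$, and fundamental slide polynomials stabilize to fundamental quasisymmetric functions (Proposition~\ref{prop:fund-limit}), so that flattening the slide expansion of $\schubert_{1^m\times w}$ gives the $F$-expansion of $\stanley_w$. The one place you go beyond the paper is that you actually prove the stabilization lemma, which the paper simply cites from \cite{AS17}; your monomial-by-monomial argument is sound: for $x^c$ supported in the first $N$ positions and $j\geq N$, every partial-sum constraint from $c\geq 0^j\times a$ is either vacuous (indices $\leq j$, where the right side is $0$) or automatic (indices $>j$, where the left side is $|c|=|a|$ because refinement preserves total degree), so the dominance condition indeed evaporates and the coefficient agrees with that of $F_{\flatten(a)}$. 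Your bookkeeping that $r_i(\rho^{+m})=r_i(\rho)+m$, that the $r_i$ are strictly increasing so $\flatten(\des(\rho^{+m}))=\Des(\rho)$, and that incrementing $m$ merely prepends a zero, matches the paper's conventions (compare the computed values $(3,1,0,1)$ and $(3,2,0,0)$ for $\R(42153)$). In short: same route as the paper, made self-contained by supplying the proof of the cited lemma.
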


In parallel to this, Assaf and Searles \cite{AS17} showed that fundamental quasisymmetric functions are the stable limits of fundamental slide polynomials.

\begin{proposition}[\cite{AS17}]
  For a weak composition $a$, we have
  \begin{equation}
    \lim_{m \rightarrow \infty} \fund_{0^m \times a} = F_{\flatten(a)}(X).
  \end{equation}
  \label{prop:fund-limit}
\end{proposition}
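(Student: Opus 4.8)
The plan is to prove the identity coefficient by coefficient: I would show that for every monomial $x^c$, with $c$ a weak composition of fixed finite support, the coefficient of $x^c$ in $\fund_{0^m \times a}$ stabilizes as $m \to \infty$ to its coefficient in $F_{\flatten(a)}$. Since both sides are multiplicity-free sums of distinct monomials, every coefficient lies in $\{0,1\}$, so it suffices to match, in the limit, the set of monomials that actually occur on each side.

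The first observation is that flattening ignores the prepended zeros, so $\flatten(0^m \times a) = \flatten(a)$ for every $m$. Consequently the refinement condition ``$\flatten(b)$ refines $\flatten(0^m \times a)$'' appearing in Definition~\ref{def:fundamental-shift} is literally identical to the refinement condition ``$\flatten(b)$ refines $\flatten(a)$'' that governs the monomials of $F_{\flatten(a)}$. Thus the only discrepancy between $\fund_{0^m \times a}$ and $F_{\flatten(a)}$ is the dominance constraint $b \geq 0^m \times a$. I would then unpack this constraint: writing out the partial sums of $0^m \times a = (0,\ldots,0,a_1,\ldots,a_n)$, the first $m$ inequalities $b_1 + \cdots + b_k \geq 0$ are vacuous, while the remaining ones read $b_1 + \cdots + b_{m+j} \geq a_1 + \cdots + a_j$ for $j = 1,\ldots,n$.

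The crux is that for a fixed monomial $x^c$ these surviving inequalities become automatic once $m$ is large. Suppose $x^c$ occurs in $F_{\flatten(a)}$, so $\flatten(c)$ refines $\flatten(a)$; since refinement preserves total size, I obtain the total-degree identity $|c| = |\flatten(c)| = |\flatten(a)| = |a|$. If $c$ is supported on $x_1,\ldots,x_N$, then for every $m \geq N$ each left-hand side $c_1 + \cdots + c_{m+j}$ already equals the full total degree $|c| = |a|$, which dominates every partial sum $a_1 + \cdots + a_j$ because the parts of $a$ are nonnegative. Hence the dominance constraint holds for all large $m$, and $x^c$ occurs in $\fund_{0^m \times a}$. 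Conversely, every monomial occurring in $\fund_{0^m \times a}$ satisfies the refinement condition by definition, so it already occurs in $F_{\flatten(a)}$, for every $m$. Combining the two directions, the coefficient of each $x^c$ is eventually constant and equal to its coefficient in $F_{\flatten(a)}$, which is precisely the asserted stable limit.

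The step I expect to require the most care is the passage from a single fixed monomial to a uniform threshold on $m$: one must observe that the support $N$ of $c$ is finite and, crucially, that the refinement condition pins down $|c| = |a|$, since it is exactly this degree-preserving feature of refinement that collapses all of the dominance inequalities simultaneously. A minor bookkeeping point is the degenerate case in which $a$ is the zero composition, so that $\flatten(a)$ is empty and both sides equal $1$; the same argument covers it.
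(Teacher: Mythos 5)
Your proof is correct. Note first that the paper itself does not prove this statement: it is quoted from \cite{AS17} and used as a black box, so there is no internal argument to compare yours against; what you have written is the natural definitional proof, and it holds up. The two sides differ, as you say, only in the dominance condition $b \geq 0^m \times a$ and in the restriction that $b$ have length $m+n$ (i.e.\ that monomials of $\fund_{0^m \times a}$ involve only $x_1,\ldots,x_{m+n}$); your single threshold $m \geq N$ disposes of both at once, since a monomial $x^c$ supported on $x_1,\ldots,x_N$ is then a legal exponent vector of length $m+n$ and its partial sums $c_1+\cdots+c_{m+j}$ all equal $|c|$. The one step worth flagging as load-bearing is exactly the one you flag yourself: refinement of strong compositions forces $|\flatten(c)| = |\flatten(a)|$, hence $|c| = |a|$, and this degree identity is what makes all $n$ nontrivial dominance inequalities collapse to $|a| \geq a_1 + \cdots + a_j$. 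Combined with the observation that both generating sums are multiplicity-free, the coefficientwise stabilization you describe is precisely the meaning of the formal limit, so the argument is complete.
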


Therefore flattening the strong compositions in the fundamental slide expansion of $\schubert_{1^m \times w}$ precisely gives the fundamental quasisymmetric expansion of $\stanley_w$. To fit Schur functions into this stable picture, we consider another basis for the polynomial ring called \emph{key polynomials}.

%%%%%%%%%%%%%%%%%%%%%%%%%%%%%%%%%%%%%%%%%%%%%%%%%%%%%%%%%%%%%%%%
\subsection{Key polynomials}
%%%%%%%%%%%%%%%%%%%%%%%%%%%%%%%%%%%%%%%%%%%%%%%%%%%%%%%%%%%%%%%%
\label{sec:key}

The key polynomials first arose as Demazure characters for the general linear group \cite{Dem74} and were later studied combinatorially by Lascoux and Sch{\"u}tzenberger \cite{LS90} who expounded on their connection with Schubert polynomials. As with Schubert polynomials, original definitions were given in terms of divided differences, though we will derive a combinatorial model in terms of fundamental slide polynomials based on work of Kohnert \cite{Koh91} and Assaf and Searles \cite{AS-2}. See \cite{RS95} for a thorough treatment of the combinatorics of key polynomials.

A \emph{diagram} is a finite collection of cells in the $\mathbb{Z}\times\mathbb{P}$ lattice. We index each cell of a diagram by its top right corner. A diagram is \emph{virtual} if it contains a cell with nonpositive row index.

\begin{figure}[ht]
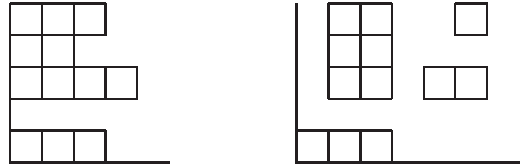

  \begin{displaymath}
    \vline\tableau{ \ & \ & \ \\ \ & \ \\  \ & \ & \ & \ & \\ \\ \ & \ & \ \\\hline }
    \hspace{4\cellsize}
    \vline\tableau{ & \ & \ & & & \ \\ & \ & \ \\ & \ & \ & & \ & \ & \\ \\ \ & \ & \ \\\hline }
  \end{displaymath}
   \caption{\label{fig:diagrams}The key diagram for $(3,0,4,2,3)$ (left) and another diagram with the same weight (right).}
\end{figure}

The \emph{weight} of a diagram $D \subset \mathbb{P} \times \mathbb{P}$, denoted by $\wt(D)$, is the weak composition whose $i$th part is the number of cells in row $i$ of $D$. The weight of a virtual diagram is $\varnothing$.

\begin{definition}[\cite{AS-2}]
  Given a weak composition $a$ of length $n$, a \emph{Kohnert tableau of shape $a$} is a diagram filled with entries $1^{a_1}, 2^{a_2}, \ldots, n^{a_n}$, one per cell, satisfying the following conditions:
  \begin{enumerate}[label=(\roman*)]
  \item there is exactly one $i$ in each column $1$ through $a_i$;
  \item each entry in row $i$ is at least $i$;
  \item the $i$'s weakly descend from left to right;
  \item if $i<j$ appear in a column with $i$ above $j$, then there is an $i$ right of and strictly above $j$.
  \end{enumerate}
  \label{def:kohnert}
\end{definition}

Kohnert tableaux are so named because they are based on \emph{Kohnert moves} on \emph{key diagrams}. The \emph{key diagram} of a weak composition $a$, denoted by $\YD(a)$, is the diagram with $a_i$ cells left-justified in row $i$. For example, the left diagram in Figure~\ref{fig:diagrams} is the key diagram for $(3,0,4,2,3)$. A \emph{Kohnert move} on a diagram selects a nonempty row and the rightmost cell therein, then pushes this cell down to the highest empty space below it. Kohnert \cite{Koh91} showed that set of the diagrams obtained from Kohnert moves on a key diagram generate the key polynomial. Assaf and Searles \cite{AS-2} showed that these diagrams are in bijection with Kohnert tableaux, the latter being easier to enumerate directly. 

\begin{definition}[\cite{AS-2}]
  A Kohnert tableau is \emph{quasi-Yamanouchi} if each nonempty row $i$ either has an entry equal to $i$ or has a cell weakly left of some cell in row $i+1$. 
  \label{def:quasi-Yam}
\end{definition}

Denote the set of quasi-Yamanouchi Kohnert tableaux of shape $a$ by $\QKT(a)$.

\begin{definition}[\cite{AS-2}]
  For a weak composition $a$, we have
  \begin{equation}
    \key_a = \sum_{D \in \QKT(a)} \fund_{\wt(D)},
    \label{e:key-slide}
  \end{equation}
  where the sum may be taken over all non-virtual quasi-Yamanouchi Kohnert tableaux.
  \label{def:key-slide}
\end{definition}

\begin{figure}[ht]
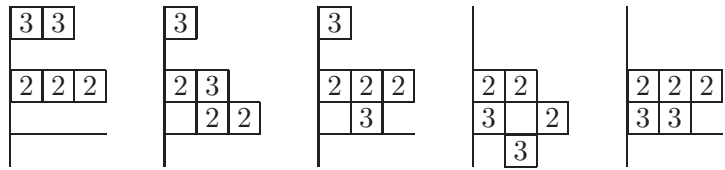

  \begin{center}
    \begin{displaymath}
      \begin{array}{c@{\hskip 2em}c@{\hskip 2em}c@{\hskip 2em}c@{\hskip 2em}c}
        \vline\tableau{ 3 & 3 \\ \\ 2 & 2 & 2 \\ \\ \hline \\ } &
        \vline\tableau{ 3 \\ \\ 2 & 3 \\ & 2 & 2 \\ \hline \\ } &
        \vline\tableau{ 3 \\ \\ 2 & 2 & 2 \\ & 3 \\ \hline \\ } &
        \vline\tableau{ \\ \\ 2 & 2 &  \\ 3 & & 2 \\ \hline & 3  \\ } &
        \vline\tableau{ \\ \\ 2 & 2 & 2 \\ 3 & 3 \\ \hline \\ } 
      \end{array}
    \end{displaymath}
    \caption{\label{fig:qYam_kohnert_tableaux}Quasi-Yamanouchi Kohnert tableaux of shape $(0, 3, 0, 2)$.}
  \end{center}
\end{figure}

For example, Figure~\ref{fig:qYam_kohnert_tableaux} gives $\QKT(0,3,0,2)$. From this we compute
\begin{displaymath}
 \key_{(0,3,0,2)} = \fund_{(0,3,0,2)} + \fund_{(2,2,0,1)} + \fund_{(1,3,0,1)} + \fund_{(2,3,0,0)}.
\end{displaymath}

We may reverse Kohnert moves on quasi-Yamanouchi Kohnert tableaux to give a simple tableau model for key polynomials in terms of certain fillings of key diagrams.

\begin{definition}
  A \emph{standard key tableau} is a bijective filling of a key diagram with $\{1,2,\ldots,n\}$ such that rows weakly decrease and if some entry $i$ is above and in the same column as an entry $k$ with $i<k$, then there is an entry immediately right of $k$, say $j$, and $i<j$. 
  \label{def:key-tab}
\end{definition}

We denote the set of key tableaux of shape $a$ by $\Key(a)$. For example, see Figure~\ref{fig:key-tableau}. 

\begin{figure}[ht]
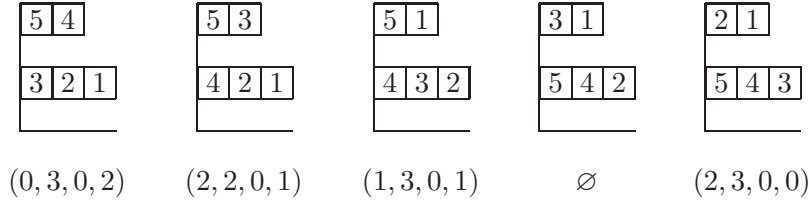

  \begin{displaymath}
    \begin{array}{c@{\hskip 2em}c@{\hskip 2em}c@{\hskip 2em}c@{\hskip 2em}c}
      \vline\tableau{5 & 4 \\ \\ 3 & 2 & 1 \\ \\\hline} &
      \vline\tableau{5 & 3 \\ \\ 4 & 2 & 1 \\ \\\hline} &
      \vline\tableau{5 & 1 \\ \\ 4 & 3 & 2 \\ \\\hline} &
      \vline\tableau{3 & 1 \\ \\ 5 & 4 & 2 \\ \\\hline} &
      \vline\tableau{2 & 1 \\ \\ 5 & 4 & 3 \\ \\\hline} \\ \\
      (0,3,0,2) & (2,2,0,1) & (1,3,0,1) & \varnothing & (2,3,0,0) 
    \end{array}
  \end{displaymath}
  \caption{\label{fig:key-tableau}Standard key tableaux of shape $(0,3,0,2)$ and their weak descent compositions.}
\end{figure}

\begin{definition}
  For a standard tableau $T$, the \emph{run decomposition of $T$} is $\tau = (\tau^{(k)}|\ldots|\tau^{(1)})$, where $\tau$ is the decreasing word $n \cdots 2 1$ broken between $i+1$ and $i$ precisely when $i+1$ lies weakly right of $i$ in $T$. In this case, we call $i$ a \emph{descent} of $T$.
  \label{def:run-std}
\end{definition}

For example, the run decompositions for the standard key tableaux in Figure~\ref{fig:key-tableau} are $(54|321)$, $(5|43|21)$, $(5|432|1)$, $(54|32|1)$, $(543|21)$, respectively. 

\begin{definition}
  For a standard tableau $T$, let $(\tau^{(k)}|\ldots|\tau^{(1)})$ be the run decomposition of $T$. Set $t_k = \mathrm{row}(\tau^{(k)}_1)$ and, for $i<k$, set $t_i = \min(\mathrm{row}(\tau^{(i)}_j),t_{i+1}-1)$, where $j=1,\ldots,|\tau^{(i)}|$. Define the \emph{weak descent composition of $T$}, denoted by $\des(T)$, by $\des(T)_{t_i} = |\tau^{(i)}|$ and all other parts are zero if all $t_i>0$ and $\des(T) = \varnothing$ otherwise.
  \label{def:des-std}
\end{definition}

\begin{remark}
  Note that, for the current case of standard key tableaux, it is enough to take $t_i = \min(\mathrm{row}(\tau^{(i)}_{1}),t_{i+1}-1)$. To see why, if $k+1,k$ are in $\tau^{(i)}$ with $k$ in row $r$ strictly below, then since $k$ is strictly right of $k+1$, there must be some larger entry, say $\ell$, left of $k$ in its row and in the column of $k+1$. In this case, $\ell$ must be in $\tau^{j}$ for some $j>i$ and we must have $t_j \leq r$. Therefore $t_i$ will not attain its value at $r$, the row of $k$. Despite this apparent simplification, we keep this more general definition for weak descent compositions as it is needed in \S~\ref{sec:LRR}.
\end{remark}
  
For example, weak descent compositions for the standard key tableaux in Figure~\ref{fig:key-tableau} are shown.

\begin{definition}
  For $D\in\QKT(a)$, the \emph{ascended tableau of $D$}, denoted by $\ascend(D)$, is obtained by re-labeling the cells of $D$ along rows from left to right, beginning at the top, with $n,n-1,\ldots,2,1$, and returning any cell originally labeled by $i$ back to row $i$.

  For $T\in\Key(a)$, the \emph{descended diagram of $T$}, denoted by $\descend(T)$, is the diagram obtained by pushing cells down minimally until the word obtained by reading entries right to left, from bottom to top is the identity, and then relabeling cells based on their original row index.
  \label{def:key-descend}
\end{definition}

For example, the ascended tableaux for the quasi-Yamanouchi Kohnert tableaux in Figure~\ref{fig:qYam_kohnert_tableaux} are given in Figure~\ref{fig:key-tableau}, respectively. Conversely, the descended diagrams for the standard key tableaux in Figure~\ref{fig:key-tableau} are shown in Figure~\ref{fig:qYam_kohnert_tableaux}, respectively. 

%\begin{figure}[ht]
%  \begin{displaymath}
%    \begin{array}{c@{\hskip 2em}c@{\hskip 2em}c@{\hskip 2em}c@{\hskip 2em}c}
%      \vline\tableau{5 & 4 \\ \\ 3 & 2 & 1 & \\ \\\hline } & 
%      \vline\tableau{5 \\ \\ 4 & 3  \\ & 2 & 1 & \\\hline} & 
%      \vline\tableau{5 \\ \\ 4 & 3 & 2 \\ & 1 & & \\\hline} & 
%      \vline\tableau{ \\ \\ 5 & 4  \\ 3 & & 2 & \\\hline & 1 &} & 
%      \vline\tableau{ \\ \\ 5 & 4 & 3 \\ 2 & 1 & & \\\hline} 
%    \end{array}
%  \end{displaymath}
%  \caption{\label{fig:key-descend}Descended diagrams for the standard key tableaux of shape $(0,3,0,2)$.}
%\end{figure}

\begin{theorem}
  The maps $\ascend$ and $\descend$ are inverse bijections between $\Key(a)$ and $\QKT(a)$ such that $\wt(D) = \des(\ascend(D))$ and $\des(T) = \wt(\descend(T))$.
  \label{thm:key-descend}
\end{theorem}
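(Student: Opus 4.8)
The plan is to analyze both maps one cell at a time, exploiting the fact that neither map moves a cell out of its column: $\ascend$ only raises a cell from its row to the row recorded by its entry (and relabels), while $\descend$ only lowers cells and relabels by their starting row. The single most useful observation is that the order in which $\ascend$ assigns the standard labels $n,n-1,\ldots,1$ (down rows from the top, left to right within a row) is the exact reversal of the reverse reading word (right to left, bottom to top). Consequently, a diagram whose cells carry distinct labels has reverse reading word equal to the identity if and only if reading its cells top-to-bottom and left-to-right recovers $n,n-1,\ldots,1$; this is precisely the labeling $\ascend$ produces before raising cells, so the cells of $D$ in their original positions, carrying the standard labels, already have identity reverse reading word. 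This is what will force $\descend$ to undo $\ascend$.

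First I would check that the two maps land in the claimed sets. For $\ascend(D)$ to lie in $\Key(a)$ I must verify that its rows strictly decrease and that it satisfies the column condition of Definition~\ref{def:key-tab}. Strict decrease along a row follows from condition (iii) of Definition~\ref{def:kohnert}: the copies of a fixed entry $i$ weakly descend from left to right and occupy the distinct columns $1,\ldots,a_i$, so reading top-to-bottom and left-to-right assigns them strictly decreasing standard labels, and these are exactly the entries filling row $i$ of $\ascend(D)$. For the column condition I would translate condition (iv): if two cells share a column with the smaller entry above, (iv) provides a witnessing copy up and to the right, and one checks that after standardizing and raising, this witness becomes the entry immediately right of the lower cell required by Definition~\ref{def:key-tab}. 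Symmetrically, for $\descend(T)\in\QKT(a)$ I would verify conditions (i)--(iv) and the quasi-Yamanouchi property of the pushed-and-relabeled diagram. I expect the translation between condition (iv) and the standard key column condition to be the main obstacle, since it is exactly where the two seemingly different local rules must be shown to coincide; the remaining Kohnert conditions and the quasi-Yamanouchi property follow more directly from the construction.

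Next I would prove the maps are mutually inverse. Since every cell keeps its column, it suffices to track rows and labels. Applying $\ascend$ to $D$ raises the cell with entry $i$ from its (weakly lower) row to row $i$ and gives it a standard label; by the reversal observation, the original positions of $D$ form a push-down configuration of $\ascend(D)$ with identity reverse reading word, and I would argue it is the \emph{minimal} (highest) such configuration, so $\descend$ returns each cell to its $D$-row. Relabeling by starting row then restores the entry $i$, since that cell started in row $i$ of $\ascend(D)$; hence $\descend(\ascend(D))=D$. The reverse composition $\ascend(\descend(T))=T$ is established the same way, using that $\descend$ lowers cells to the identity-reading-word positions and that re-standardizing recovers the entries of $T$. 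The one delicate point here is the well-definedness of pushing down \emph{minimally}, i.e.\ that the highest push-down configuration with identity reverse reading word is unique.

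Finally, for the statistics it is enough to prove $\des(T)=\wt(\descend(T))$, since the bijection then yields $\wt(D)=\des(\ascend(D))$ upon substituting $D=\descend(T)$. The key point is that each run $\tau^{(i)}$ of $T$ collapses to a single row under $\descend$: within a run there are no descents, so consecutive entries occupy strictly increasing columns, and as the run consists of consecutive integers it must land, after pushing to identity reverse reading word, as a horizontal strip in one row. That row is forced to be the highest the cells can reach, namely the minimum of their rows capped to lie strictly below the preceding run, which is exactly the value $t_i$ of Definition~\ref{def:des-std}. Thus row $t_i$ of $\descend(T)$ contains precisely the $|\tau^{(i)}|$ cells of the run and no others, giving $\wt(\descend(T))_{t_i}=|\tau^{(i)}|=\des(T)_{t_i}$; since the $t_i$ are distinct this yields $\wt(\descend(T))=\des(T)$ and completes the proof.
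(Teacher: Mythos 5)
Your proposal is correct and follows the same skeleton as the paper's proof: verify that each map lands in the claimed set by translating the Kohnert conditions (i)--(iv) of Definition~\ref{def:kohnert} into the conditions of Definition~\ref{def:key-tab} and back, establish that the maps are mutually inverse, and prove one of the two weight identities, deducing the other by substitution. The genuine differences are in the last two steps. For inverseness the paper simply declares the maps ``clearly inverse'' once the images are established, whereas you make this explicit via the reversal observation (the labeling order of $\ascend$ is the reversal of the reverse reading word), which in effect addresses the well-definedness of $\descend$ that the paper never discusses; you still owe the uniqueness of the cell-wise highest push-down configuration, which you correctly flag as the delicate point. For the statistics the paper argues on the $\ascend$ side: after relabeling, $i$ is a descent of $\ascend(D)$ if and only if $i$ is leftmost in its row of $D$, and the quasi-Yamanouchi condition then forces each $t_i$ to equal the corresponding row index of $D$, giving $\wt(D)=\des(\ascend(D))$ directly; you argue on the $\descend$ side, showing each run lands as a horizontal strip in row $t_i$. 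Both routes work, but note that your stated reason for the collapse (``the run consists of consecutive integers'') is not by itself sufficient: a priori a run could split across two rows while preserving the identity reverse reading word, with the smaller entries leftmost in a lower row and the larger entries rightmost in a higher row. The argument that completes your route is that within a run the smaller entries are read earlier, hence must sit weakly lower, so any split pushes them strictly below the larger entries and thus strictly below the single-row configuration at height $t_i$; minimality therefore forces the collapse. With that point repaired your approach is sound, and it has the advantage of making the minimality in the definition of $\descend$ concrete, whereas the paper's choice of direction avoids analyzing minimal configurations altogether.
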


\begin{proof}
  Consider first $\ascend(D)$ for $D$ a Kohnert tableau for $a$. Condition (i) of Definition~\ref{def:kohnert} ensures that $\ascend(D)$ is a labeling of the key diagram of $a$; condition (ii) ensures that cells move weakly up in passing from $D$ to $\ascend(D)$; condition (iii) ensures that rows of $\ascend(D)$ are weakly decreasing; and condition (iv) ensures that if some entry $i$ of $\ascend(D)$ lies above some entry $k$ with $i<k$ in the same column, then there is any entry $j$ immediately right of $k$ with $i<j$. In particular, $\ascend(D) \in \Key(a)$. 

  Next consider $\descend(T)$ for $T\in\Key(a)$. Re-labeling cells based on original row index ensures condition (i) of Definition~\ref{def:kohnert} holds for $\descend(T)$; cells moving down ensures condition (ii); entries weakly decreasing along rows ensures that cells to the right in the same row move weakly lower, giving condition (iii); and the column inversion condition for key tableaux precisely corresponds to condition (iv). Therefore $\descend(T)$ is a Kohnert tableau for $a$. Moreover, for every nonempty row $r$ of the key diagram for $a$, either the leftmost entry, say $i$, remains in row $r$ or it must be pushed minimally down, in which case sits in the row immediately below $i+1$ which, by the definition of descents, lies weakly to its right. Thus $\descend(T)$ is quasi-Yamanouchi.

  With images established, the maps are clearly inverse to one another, proving that both are indeed bijections. For $D\in\QKT(a)$, after we re-label cells, $i$ will be a descent of $\ascend(D)$ if and only if $i$ is the leftmost in its row. In particular, $\wt(D) = \des(\ascend(D))$.
\end{proof}

In particular, standard key tableaux give another characterization of key polynomials.

\begin{corollary}
  The key polynomial for a weak composition $a$ is given by
  \begin{equation}
    \key_a = \sum_{T \in \Key(a)} \fund_{\des(T)},
    \label{e:key-fund-des}
  \end{equation}
  where the sum may be taken over non-virtual standard key tableaux of shape $a$.
\end{corollary}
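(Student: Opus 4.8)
The plan is to derive this directly from Definition~\ref{def:key-slide} together with the bijection established in Theorem~\ref{thm:key-descend}, so that the entire argument amounts to a change of summation index. Starting from the defining expansion $\key_a = \sum_{D \in \QKT(a)} \fund_{\wt(D)}$, I would invoke the fact that $\ascend$ is a bijection from $\QKT(a)$ onto $\Key(a)$ with inverse $\descend$, so that every $T \in \Key(a)$ arises uniquely as $T = \ascend(D)$ for some $D \in \QKT(a)$. Substituting the weight identity $\wt(D) = \des(\ascend(D))$ supplied by the theorem, the summand indexed by $D$ becomes $\fund_{\des(\ascend(D))}$, and reindexing by $T = \ascend(D)$ produces exactly $\sum_{T \in \Key(a)} \fund_{\des(T)}$, which is the desired expansion \eqref{e:key-fund-des}.

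The only point requiring care is the passage between the ``all objects'' and ``non-virtual objects'' versions of each sum. Here I would appeal to the convention $\fund_{\varnothing} = 0$: a virtual quasi-Yamanouchi Kohnert tableau $D$ has $\wt(D) = \varnothing$ and hence contributes $\fund_{\varnothing} = 0$ to the defining sum, so Definition~\ref{def:key-slide} may be read over non-virtual $D$ without altering the value. Dually, under $\ascend$ a virtual $D$ pairs with a standard key tableau $T$ satisfying $\des(T) = \wt(\descend(T)) = \wt(D) = \varnothing$, so it again contributes zero on the right-hand side. Thus the restriction to non-virtual objects is harmless on both sides, and since the bijection respects it, the ``non-virtual'' clause of the statement is legitimate.

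Since the substance of the matter --- constructing the weight-preserving bijection and verifying the two identities $\wt(D) = \des(\ascend(D))$ and $\des(T) = \wt(\descend(T))$ --- is precisely the content of Theorem~\ref{thm:key-descend}, I do not anticipate any genuine obstacle: the corollary is a formal consequence obtained by reindexing a finite sum along an explicit bijection. The single subtlety worth double-checking is that the virtual conventions on the two sides align as described above, which is what makes the optional restriction to non-virtual tableaux valid.
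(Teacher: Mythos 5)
Your proposal is correct and matches the paper's intended argument exactly: the corollary is stated as an immediate consequence of Theorem~\ref{thm:key-descend}, obtained by reindexing the sum in Definition~\ref{def:key-slide} along the bijection $\ascend$ and substituting $\wt(D) = \des(\ascend(D))$. Your handling of the virtual objects via the convention $\fund_{\varnothing} = 0$ (noting that the bijection matches virtual diagrams with tableaux whose weak descent composition is $\varnothing$) is precisely the point that justifies the ``non-virtual'' clause, so nothing is missing.
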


Given a partition $\lambda$ and a positive integer $k$ that is at least the length of $\lambda$, let $a(\lambda,k)$ denote the weak composition of length $k$ with weakly increasing parts that sort to $\lambda$. Then we have the following.

\begin{corollary}
  For $\lambda$ a partition and $k$ a positive integer, we have
  \begin{equation}
    \key_{a(\lambda,k)} = s_{\lambda}(x_1,\ldots,x_k).
  \end{equation}
\end{corollary}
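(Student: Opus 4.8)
The plan is to compare, term by term, the fundamental expansions of the two sides. By \eqref{e:key-fund-des} we have $\key_{a(\lambda,k)} = \sum_{T\in\Key(a(\lambda,k))}\fund_{\des(T)}$, while restricting Definition~\ref{def:schur-F} to $k$ variables gives $s_\lambda(x_1,\ldots,x_k) = \sum_{U\in\SYT(\lambda)}F_{\Des(U)}(x_1,\ldots,x_k)$. The weak composition $a(\lambda,k)$ is weakly increasing, so its key diagram $\YD(a(\lambda,k))$ is just the Young diagram of $\lambda$ pushed into the top $\ell(\lambda)$ rows; it is this anti-dominance that should force the two sums to coincide. I would establish the identity through two independent ingredients.

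First I would prove that $\fund_{\des(T)}(x_1,\ldots,x_k) = F_{\flatten(\des(T))}(x_1,\ldots,x_k)$ for every $T\in\Key(a(\lambda,k))$. Comparing \eqref{e:F_n} and \eqref{e:fundamental-shift}, the right-hand side is the sum of all monomials $x^b$ (with $b$ of length $k$) whose flattening refines $\flatten(\des(T))$, while the left-hand side imposes the additional constraint $b\ge\des(T)$. The two therefore agree exactly when $\des(T)$ is the $\ge$-minimal weak composition of length $k$ occurring in $F_{\flatten(\des(T))}(x_1,\ldots,x_k)$, i.e.\ when its nonzero parts are pushed into the top rows $k-r+1,\ldots,k$, where $r$ is the number of runs of $T$. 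By Definition~\ref{def:des-std} this is the assertion that $t_i = k-r+i$ for all $i$, which in turn rests on the base claim that the largest entry $n$ occupies the top nonempty row: were $n$ leftmost in a lower row $\rho$, anti-dominance makes the row above at least as long, and reading the column-inversion condition of Definition~\ref{def:key-tab} across the cells of row $\rho$ propagates rightward until the rightmost cell of $\rho$ has nothing to its right to satisfy it, a contradiction. The remaining equalities $t_i = t_{i+1}-1$ follow from the same analysis applied run by run.

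Second I would construct a bijection $\Key(a(\lambda,k))\to\SYT(\lambda)$ matching the (oppositely oriented) descent conventions of Definition~\ref{def:run-std} and of standard Young tableaux, so that $\flatten(\des(T)) = \Des(U)$; combined with the first ingredient this yields $\key_{a(\lambda,k)} = \sum_T F_{\flatten(\des(T))} = \sum_U F_{\Des(U)} = s_\lambda(x_1,\ldots,x_k)$. I expect this bijection, equivalently the equality of the two multisets of descent compositions, to be the main obstacle: although the underlying diagrams literally coincide, the map is not the naive reflect-and-complement (that reverses descents), so one must exhibit the correct correspondence and verify it is well defined in both directions against the conditions of Definition~\ref{def:key-tab} and of standard Young tableaux. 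As a cleaner-looking alternative that nonetheless hinges on an analogous correspondence, one may instead invoke the already-available identity $\schubert_{v(\lambda,k)} = s_\lambda(x_1,\ldots,x_k)$ of \cite{LS82}: since $\schubert_{v(\lambda,k)} = \sum_{\rho\in\R(v(\lambda,k))}\fund_{\des(\rho)}$ by \eqref{e:schubert-slide}, it then suffices to give a $\des$-preserving bijection $\R(v(\lambda,k))\leftrightarrow\Key(a(\lambda,k))$, which bypasses the first ingredient entirely but trades the bijection with $\SYT(\lambda)$ for one between reduced expressions and standard key tableaux.
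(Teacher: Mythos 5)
Your strategy---anti-dominance forces each $\des(T)$ to be top-justified, so that $\fund_{\des(T)}$ coincides with a fundamental quasisymmetric polynomial in $k$ variables, after which a descent-preserving bijection with $\SYT(\lambda)$ finishes---is the natural route from \eqref{e:key-fund-des}, and your propagation argument that the largest entry must occupy the top row is correct. But there is a genuine gap: you ignore virtual standard key tableaux, and these do occur for shapes $a(\lambda,k)$. Take $\lambda=(2,2)$ and $k=2$, so $a(\lambda,k)=(2,2)$, and let $T\in\Key(2,2)$ have top row $4,2$ and bottom row $3,1$; this satisfies Definition~\ref{def:key-tab}. Its run decomposition is $(4\,|\,3\,2\,|\,1)$, and Definition~\ref{def:des-std} gives $t_3=2$, $t_2=1$, $t_1=0$, hence $\des(T)=\varnothing$ and $\fund_{\des(T)}=0$. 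For this $T$ your first ingredient, $\fund_{\des(T)}=F_{\flatten(\des(T))}(x_1,\ldots,x_k)$, is not even well formed, and the assertion that $t_i=k-r+i$ with all $t_i>0$ is impossible, since the number of runs $r=3$ exceeds $k=2$. Likewise the bijection demanded by your second ingredient cannot exist as stated: no non-virtual element of $\Key(2,2)$ can be matched to the element of $\SYT(2,2)$ with descent composition $(1,2,1)$. The corollary survives only because both offending contributions vanish, $\fund_{\varnothing}=0$ by convention and $F_{(1,2,1)}(x_1,x_2)=0$ because a composition with more than $k$ parts has no refinement of length at most $k$. A correct proof must therefore split $\Key(a(\lambda,k))$ into virtual and non-virtual elements, show the virtual ones are exactly those with more than $k$ runs, pair them with the tableaux in $\SYT(\lambda)$ having more than $k$ runs, and run your term-by-term argument only on the complement.

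Two further soft spots. First, ``the remaining equalities $t_i=t_{i+1}-1$ follow from the same analysis'' is not quite right: for the largest entry $m$ of run $\tau^{(i)}$, unlike for the global maximum $n$, the cell directly above $m$ could a priori hold a \emph{larger} entry, which kills the propagation before it starts. You must first prove that every entry of a run $\tau^{(j)}$ with $j\ge i+1$ lies in a row at least $t_{i+1}$ (using $t_j\le\row(e)$ for each entry $e$ of $\tau^{(j)}$ together with $t_{i+1}<t_{i+2}<\cdots$); only this forces the entry above $m$ to be smaller, and then your rightward propagation applies. Second, the bijection you defer is genuinely the crux, and it is supplied later in the paper as the map $\Phi$ of Theorem~\ref{thm:dual-SKT} (let cells fall to $\sort(a)$, sort columns, replace $i$ by $n-i+1$); note, however, that $\Phi$ reverses descent compositions, since $i\in\Des(T)$ if and only if $n-i\in\Des(\Phi(T))$, so you would additionally need that the multiset of descent compositions of $\SYT(\lambda)$ is closed under reversal, or else absorb the reversal into your matching. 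As written, with the virtual case unaddressed and the bijection unconstructed, the proposal is an outline of a repairable argument rather than a proof.
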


Therefore the key polynomials also contain the Schur polynomials as a special case. We argue that the parallel here is much deeper than with Schubert polynomials. 

We say that a standard key tableau is \emph{yamanouchi} if its reverse row reading word is the identity. We have the following key tableau analog of Proposition~\ref{prop:super-SYT}.

\begin{proposition}
  For a weak composition $a$, there exists a unique yamanouchi key tableau $Y$ of shape $a$. Moreover, for $T\in\Key(a)$, we have $\des(T) = a$ if and only if $T=Y$.
  \label{prop:super-Key}
\end{proposition}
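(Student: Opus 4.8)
The plan is to mirror the proof of Proposition~\ref{prop:super-SYT}: produce the candidate $Y$ explicitly, verify that it is a valid standard key tableau and the unique yamanouchi one, and then establish the two implications $\des(Y)=a$ and $\des(T)=a\Rightarrow T=Y$. Throughout I would lean on the bijection $\ascend,\descend$ of Theorem~\ref{thm:key-descend}, which hands us two of these three steps almost for free.

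For the construction, let $D_0$ denote the key diagram $\YD(a)$ with its natural filling (row $i$ carrying $a_i$ copies of $i$); one checks directly that $D_0\in\QKT(a)$, so Theorem~\ref{thm:key-descend} gives $Y:=\ascend(D_0)\in\Key(a)$. Explicitly $Y$ fills the nonempty rows with consecutive blocks of values, the lowest nonempty row receiving the smallest block and each block decreasing along its row; this is exactly the filling forced by requiring the reverse row reading word to be the identity, which shows $Y$ is yamanouchi and, being forced, is the unique such tableau. Finally $\descend(Y)=D_0$ with $\wt(D_0)=a$, so the same theorem yields $\des(Y)=\wt(\descend(Y))=a$ with no further work, giving the ``only if'' direction.

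For the converse, suppose $T\in\Key(a)$ has $\des(T)=a$. By Theorem~\ref{thm:key-descend} the tableau $\descend(T)\in\QKT(a)$ satisfies $\wt(\descend(T))=\des(T)=a$, so it suffices to show that $D_0$ is the \emph{only} Kohnert tableau of shape $a$ of weight $a$; then $\descend(T)=D_0=\descend(Y)$ and injectivity of $\descend$ forces $T=Y$. I would prove this uniqueness from the top down. Let $r$ be the highest nonempty row. Since the weight is $a$, row $r$ has exactly $a_r$ cells, and by condition (ii) of Definition~\ref{def:kohnert} every entry in it is at least $r$; as no value exceeding $r$ occurs, the only eligible entries are the $a_r$ copies of $r$, which therefore fill the row, and condition (i) places them left-justified in columns $1,\ldots,a_r$. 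Removing this row and repeating, the same reasoning fills each successive nonempty row with its own value in left-justified position, so $D=D_0$.

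The main obstacle is exactly this last uniqueness claim, that weight $a$ pins the Kohnert tableau down to $D_0$, since everything else is delivered by Theorem~\ref{thm:key-descend}. I expect the top-down argument to settle it cleanly; the point needing care is the bookkeeping of eligible entries at each stage, namely that the only entries available for the current top row are the copies of its own value (condition (ii) having excluded everything smaller, and the rows already processed having consumed everything larger), so that these exactly fill it and none are diverted to a lower row. Should one wish to avoid the Kohnert model entirely, a direct argument paralleling Proposition~\ref{prop:super-SYT} is also possible: the equality $\des(T)=a$ fixes the run lengths $|\tau^{(i)}|$ and the rows $t_i$, and one shows by downward induction that each run $\tau^{(i)}$ must occupy its row outright. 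This route, however, has to grapple with the column-inversion condition of Definition~\ref{def:key-tab} when two nonempty rows are adjacent, precisely the awkwardness that the bijection sidesteps.
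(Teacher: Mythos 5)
Your proposal is correct and takes essentially the paper's approach: the crux (the converse direction) is handled identically, by observing that $\descend(T)$ is a Kohnert tableau of shape $a$ with weight $\des(T)=a$ and that the canonically labeled key diagram $D_0$ is the \emph{only} such object, forcing $\descend(T)=D_0=\descend(Y)$ and hence $T=Y$. The only differences are cosmetic: the paper cites this uniqueness as the upper uni-triangularity of $\key_a$ evident from Kohnert's expansion, whereas you supply a correct top-down induction proving it from the axioms of Definition~\ref{def:kohnert}, and you obtain $\des(Y)=a$ via Theorem~\ref{thm:key-descend} rather than by computing the descents of $Y$ directly.
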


\begin{proof}
  Clearly $Y$ is unique, if it exists, and it can be constructed by placing entries $1,\ldots,a_1$ left to right in row $1$, $a_1+1,\ldots,a_1+a_2$ left to right in row $2$, and so on. Then $i$ will be a descent of $Y$ precisely for the partial sums $i>0$ in $\{a_1,a_1+a_2,\ldots\}$, giving $\des(Y)=a$. Conversely, if $\Des(T)=a$, then the upper uni-triangularity of key polynomials with respect to monomials evident from Kohnert's expansion shows that $\descend(T)$ is the key diagram for $a$, in which case $T=Y$.
\end{proof}

Standard key tableaux provide the natural analog for standard Young tableaux in our generalization of dual equivalence.

Comparing key tableaux for $a$ with those for $0^m \times a$, prepending $0$'s simply prepends $0$'s to the weak descent composition. Note that the number of terms in the fundamental slide expansion of $\key_{0^m \times (3,2)}$ remains the same when $m$ is at least two. This stability phenomenon is explained by the fact that Schur functions are the \emph{stable limits} of key polynomials. This is implicit in work of Lascoux and Sch{\"u}tzenberger and is made explicit in \cite{AS-2}.

\begin{proposition}
  For a weak composition $a$, we have
  \begin{equation}
    \lim_{m \rightarrow \infty} \key_{0^m \times a} = s_{\sort(a)}(X).
  \end{equation}
  \label{prop:key-limit}
\end{proposition}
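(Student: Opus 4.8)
The plan is to compute the stable limit directly from the standard key tableau expansion \eqref{e:key-fund-des} and then to identify the resulting quasisymmetric function with a Schur function. The first observation is that prepending $m$ zeros to $a$ merely translates the key diagram upward: the key diagram of $0^m\times a$ is that of $a$ with every cell raised $m$ rows, and every requirement in Definition~\ref{def:key-tab} (weakly decreasing rows together with the column inversion condition) refers only to the relative positions of cells. Hence raising cells gives a bijection between $\Key(0^m\times a)$ and $\Key(a)$, and \eqref{e:key-fund-des} reads
\[
  \key_{0^m\times a} = \sum_{T\in\Key(a)} \fund_{\des(0^m\times T)},
\]
where $0^m\times T$ is the raised tableau. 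Since the weak descent composition of Definition~\ref{def:des-std} is assembled from the row indices $\row(\tau^{(i)}_j)$, raising every cell by one row increases each $t_i$ by one, so once $0^m\times T$ is non-virtual, adding a further zero row just prepends a zero to $\des(0^m\times T)$. Writing $\beta_T=(|\tau^{(1)}|,\ldots,|\tau^{(k)}|)$ for the tuple of run lengths coming from the run decomposition (Definition~\ref{def:run-std}), we get $\flatten(\des(0^m\times T))=\beta_T$ independently of $m$, because raising cells preserves columns and hence preserves the descent set that determines the run decomposition.

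Next I would pass to the limit. As $\Key(a)$ is finite, there is a single threshold past which every $0^m\times T$ is non-virtual, and the coefficientwise limit commutes with the finite sum. Applying Proposition~\ref{prop:fund-limit} to each term yields $\lim_m \fund_{\des(0^m\times T)}=F_{\beta_T}(X)$, so that
\[
  \lim_{m\to\infty}\key_{0^m\times a} = \sum_{T\in\Key(a)} F_{\beta_T}(X).
\]
This analytic part is routine; the combinatorial heart is to show $\sum_{T\in\Key(a)}F_{\beta_T}(X)=s_{\sort(a)}(X)$.

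Comparing with Definition~\ref{def:schur-F}, the required identity is exactly the statement that there is a bijection $\Key(a)\to\SYT(\sort(a))$ carrying the run-length composition $\beta_T$ to the descent composition $\Des$. This is where I expect the real difficulty to lie. The matching is \emph{not} effected by naively sorting the (strictly decreasing) rows of $T$ by length and reversing them, since that need not produce column-strict columns; the correspondence must genuinely redistribute entries among rows in the manner of a rectification. I would therefore construct it by a jeu-de-taquin-style straightening of $T$ onto the shape $\sort(a)$ and verify that each elementary move preserves the descent set $\{i : i+1 \text{ lies weakly right of } i\}$ governing $\beta_T$. Reconciling the two descent conventions is the crux: for standard key tableaux a descent records $i+1$ weakly \emph{right} of $i$ (Definition~\ref{def:run-std}), whereas for standard Young tableaux a descent records $i+1$ weakly \emph{left} of $i$, and one must show these complementary conventions are matched step by step under the straightening.

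As an alternative that avoids a global rectification, I would instead prove that $\sum_{T\in\Key(a)}F_{\beta_T}(X)$ depends only on $\sort(a)$, by producing a $\beta$-preserving bijection $\Key(a)\to\Key(s_i\cdot a)$ for each adjacent transposition $s_i$ of the parts of $a$. Iterating these reduces to the case $a=a(\sort(a),k)$, for which the identity $\key_{a(\sort(a),k)}=s_{\sort(a)}(x_1,\ldots,x_k)$ is already available; since $0^m\times a(\sort(a),k)=a(\sort(a),k+m)$, letting $m\to\infty$ gives $s_{\sort(a)}(X)$ and finishes the argument. Either route isolates the same obstacle, namely a descent-preserving correspondence between standard key tableaux and standard Young tableaux of the sorted shape, with the local adjacent-transposition version likely the more tractable to verify directly.
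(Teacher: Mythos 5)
Your reduction of the proposition to a purely combinatorial identity is correct and cleanly executed: the raising bijection $\Key(0^m\times a)\simeq\Key(a)$, the observation that $\flatten(\des(0^m\times T))=\Des(T)$ once the raised tableau is non-virtual, and the termwise application of Proposition~\ref{prop:fund-limit} to the finite sum \eqref{e:key-fund-des} together give $\lim_{m}\key_{0^m\times a}=\sum_{T\in\Key(a)}F_{\Des(T)}(X)$. But the proof stops there. The identity $\sum_{T\in\Key(a)}F_{\Des(T)}(X)=s_{\sort(a)}(X)$, which you yourself call the combinatorial heart, is never established: you offer two candidate strategies (a jeu-de-taquin-style straightening, or a chain of $\Des$-preserving bijections $\Key(a)\to\Key(s_i\cdot a)$ reducing to the weakly increasing case) and explicitly concede that ``either route isolates the same obstacle.'' Identifying where the difficulty lies is not the same as resolving it, so as written this is a genuine gap, and it is the entire content of the statement beyond the routine stabilization.

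For comparison: the paper itself states this proposition as a known result (implicit in Lascoux--Sch\"utzenberger, made explicit in \cite{AS-2}) rather than proving it, but the missing identity is exactly Theorem~\ref{thm:dual-SKT} later in the paper, and its proof is far simpler than either of your proposed routes. One defines $\Phi:\Key(a)\to\SYT(\sort(a))$ by letting the cells of $T$ fall within their columns --- which produces precisely the Young diagram of $\sort(a)$, since the two diagrams have the same column heights --- then sorting each column and replacing each entry $i$ by $n-i+1$. This map satisfies $i\in\Des(T)$ if and only if $n-i\in\Des(\Phi(T))$, so it reverses descent compositions, and the resulting sum is the image of $s_{\sort(a)}$ under reversal of descent compositions, which is $s_{\sort(a)}$ itself by the symmetry of Schur functions. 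Note this is close to, but not the same as, the naive row-sorting you rightly dismissed: dropping cells within columns and then sorting columns does land in $\SYT(\sort(a))$, and the descent-convention mismatch (weakly right versus weakly left) that you flagged as the crux is absorbed by the complementation $i\mapsto n-i+1$ together with the invariance of symmetric functions under this reversal. Your adjacent-transposition route could also close the gap, but nothing in your proposal constructs the required bijections.
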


Furthermore, Assaf and Searles \cite{AS-2} proved that flattening the compositions in the fundamental slide expansion of $\key_{0^m \times a}$ precisely gives the fundamental quasisymmetric expansion of $s_{\sort(a)}$. For example, flatten $\key_{0^2 \times (3,2)}$ and compare with $s_{(3,2)}$.

Lifting the Schur positivity of Stanley symmetric functions, Schubert polynomials are known to expand nonnegatively in the key basis \cite{LS90}. For example, we have
\begin{displaymath}
  \schubert_{42153} = \key_{(3,1,0,1)} + \key_{(3,2,0,0)}.
\end{displaymath}
As with Schur functions and Stanley symmetric functions, we give an independent and elementary proof of this by lifting dual equivalence to polynomials.

%%%%%%%%%%%%%%%%%%%%%%%%%%%%%%%%%%%%%%%%%%%%%%%%%%%%%%%%%%%%%%%%
\subsection{Weak dual equivalence}
%%%%%%%%%%%%%%%%%%%%%%%%%%%%%%%%%%%%%%%%%%%%%%%%%%%%%%%%%%%%%%%%
\label{sec:keys}

Given a set of combinatorial objects $\mathcal{A}$ endowed with a notion of weak descents, one can form the fundamental slide generating polynomial for $\mathcal{A}$ by
\begin{displaymath}
  \sum_{T \in \mathcal{A}} \fund_{\des(T)} .
\end{displaymath}
Two examples of this are Schubert polynomials generated by reduced expressions \eqref{e:schubert-slide} and key polynomials generated by standard key tableaux \eqref{e:key-fund-des}. We generalize the notion of dual equivalence to polynomials defined in this way as follows.

\begin{definition}
  Let $\mathcal{A}$ be a finite set, and let $\des$ be a map from $\mathcal{A}$ to weak compositions of $n$. A \emph{weak dual equivalence for $(\mathcal{A},\des)$} is a family of involutions $\{\psi_i\}_{1<i<n}$ on $\mathcal{A}$ such that
  \renewcommand{\theenumi}{\roman{enumi}}
  \begin{enumerate}
  \item For all $i-h \leq 3$ and all $T \in \mathcal{A}$, there exists a weak composition $a$ of $i-h+3$ such that
    \[ \sum_{U \in [T]_{(h,i)}} \fund_{\des_{(h-1,i+1)}(U)} = \key_{a}, \]
    where $[T]_{(h,i)}$ is the equivalence class generated by $\psi_h,\ldots,\psi_i$, and $\des_{(h,i)}(T)$ is the weak composition of $i-h+1$ obtained by deleting the first $h-1$ and last $n-i$ nonzero parts from $\des(T)$.
    
  \item For all $|i-j| \geq 3$ and all $T \in\mathcal{A}$, we have
    \begin{displaymath}
      \psi_{j} \psi_{i}(T) = \psi_{i} \psi_{j}(T).
    \end{displaymath}

  \end{enumerate}

  \label{def:deg-weak}
\end{definition}

For example, if $\des(T) = (0,3,2,0,3,1)$, then $\des_{(3,8)}(T) = (0,1,2,0,3,0)$.

As a first example of weak dual equivalence, we construct a weak dual equivalence for standard key tableaux. Define the column reading order of a standard key tableau to begin at the lowest cell of the leftmost column, read entries in the column bottom to top, then continue with the next column to the right. For example, the column reading order for the leftmost tableau in Figure~\ref{fig:key-tableau} is $35241$.

\begin{definition}
  Given $T \in \Key(a)$ and $1 < i < |a|$, define $d_i(T)$ as follows. Let $b,c,d$ be the cells with entries $i-1,i,i+1$ taken in column reading order. Then
  \begin{equation}
    d_i (T) = \left\{ \begin{array}{rl}
      \braid_{i}  (T) & \mbox{if $b,d$ are in the same row and $c$ is not} , \\
      \swap_{i-1}(T) & \mbox{else if $c$ has entry $i+1$} , \\
      \swap_{i}  (T) & \mbox{else if $c$ has entry $i-1$} , \\
      T & \mbox{otherwise},
    \end{array} \right.
  \end{equation}
  where $\braid_{j}$ cycles $j-1,j,j+1$ so that $j$ shares a row with $j \pm 1$ and $\swap_j$ interchanges $j$ and $j+1$.
  \label{def:key-deg}
\end{definition}

\begin{figure}[ht]
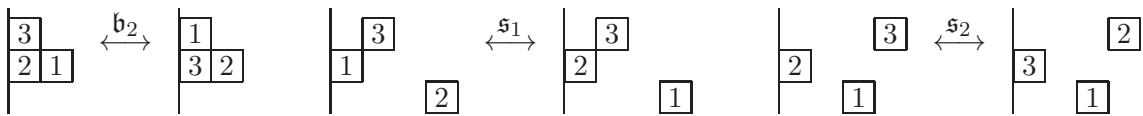

  \begin{displaymath}
    \begin{array}{ccc@{\hskip 3em}ccc@{\hskip 3em}ccc}
      \vline\tableau{ 3 & \\ 2 & 1} &
      \stackrel{\displaystyle\braid_{2}} \longleftrightarrow &
      \vline\tableau{ 1 & \\ 3 & 2} &
      \vline\tableau{ & 3 & & \\ 1 \\ & & & 2} &
      \stackrel{\displaystyle\swap_{1}} \longleftrightarrow &
      \vline\tableau{ & 3 & & \\ 2 \\ & & & 1} &
      \vline\tableau{ & & & 3 \\ 2 \\ & & 1 & } &
      \stackrel{\displaystyle\swap_{2}} \longleftrightarrow &
      \vline\tableau{ & & & 2 \\ 3 \\ & & 1 & } 
    \end{array}
  \end{displaymath}
  \caption{\label{fig:dual-SKT}The dual equivalence map $d_i$ on standard key tableaux.}
\end{figure}

See Figure~\ref{fig:dual-SKT} for a graphical representation of $d_i$, and see Figure~\ref{fig:key-deg} for examples.

\begin{lemma}
  The maps $\{d_i\}$ are well-defined involutions on $\Key(a)$. 
  \label{lem:dual-SKT}  
\end{lemma}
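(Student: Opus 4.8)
The plan is to treat the statement as a local verification, since $d_i$ alters $T$ only by rearranging the three entries $i-1,i,i+1$ among their cells, fixing the underlying key diagram and every other entry. First I would record the basic reduction: any entry of $T$ that is at most $i-2$ is smaller than each of $i-1,i,i+1$, and any entry at least $i+2$ is larger than each of them, so the outcome of a comparison between such an ``outside'' entry and one of the three moving cells is unaffected by the move. Consequently both defining conditions of Definition~\ref{def:key-tab}---weakly decreasing rows and the column-inversion condition---can only fail at an instance in which the comparisons that change outcome lie between two of the three cells holding $i-1,i,i+1$ (together with the existential witness immediately to the right, which may itself be one of these cells). Thus well-definedness and the involution property each reduce to a finite check on the relative configuration---which cells share a row, which share a column, and their relative heights---of the cells of $i-1,i,i+1$.

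For well-definedness I would first check that the four cases of Definition~\ref{def:key-deg} are mutually exclusive and exhaustive: the braid is flagged purely by the row-pattern of $b,c,d$ (the ends sharing a row, the middle not), and otherwise the move is routed by the value in the middle cell $c$, which is one of $i-1,i,i+1$. A short argument rules out the dangerous swaps: if $i$ and $i+1$ lay in a common row then, reading left to right, that row would show them strictly decreasing, and either the third entry $i-1$ also lies in this row---forcing all three across one row in the order $i+1,i,i-1$, whence $c=i$ and the move fixes $T$---or $i-1$ lies elsewhere, in which case the two row-sharing cells are the reading-order ends and the braid, not $\swap_i$, is triggered; the symmetric statement holds for $\swap_{i-1}$ and the pair $i-1,i$. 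Hence a swap is never applied to two entries sharing a row, so it cannot violate the row condition, and one checks directly that replacing $i$ by $i+1$ (or $i+1$ by $i$) keeps each affected row weakly decreasing. For the braid I would show that when the ends $b,d$ share a row and the middle $c$ does not, there is a unique cyclic relabeling of $\{i-1,i,i+1\}$ among these three cells yielding a valid standard key tableau---the one toggling which of $i\pm 1$ the entry $i$ shares a row with---and then verify both conditions for it.

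For the involution property, the fixed case is immediate. For a swap, the set of three cells and, crucially, the middle cell $c$ together with its value are unchanged by interchanging $i$ with $i+1$ (resp.\ $i-1$ with $i$), so the same case is re-selected on $d_i(T)$ and the swap is undone. For the braid, I would observe that the defining $3$-cycle toggles the row that $i$ shares with a neighbor; after one application the braid condition still holds but now selects the inverse $3$-cycle, returning $T$. Combining these gives $d_i\circ d_i=\mathrm{id}$.

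I expect the main obstacle to be the column-inversion condition, on two fronts: showing that the braid's target configuration actually exists as a valid standard key tableau, and checking that no move creates a new column-inversion $\mathrm{val}(A)<\mathrm{val}(B)$ among the three cells without supplying the required larger entry immediately right of the lower cell. Tracking this existential witness across the finitely many relative positions of the three cells---common row, common column, corner, or general position, and above versus below---is where the care lies. As in the proof of Theorem~\ref{thm:deg-red}, it may be cleanest to reduce to tableaux on a bounded alphabet around $i$ and verify the remaining configurations by a direct, possibly computer-assisted, enumeration.
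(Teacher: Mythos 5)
Your overall strategy---reduce by locality to the three cells of $i-1,i,i+1$ and analyze their relative configuration---is the same as the paper's, but the pivotal step of your swap analysis is false as stated. You claim that if $i$ and $i+1$ share a row and $i-1$ lies elsewhere, then the two row-sharing cells are the reading-order ends and the braid fires. This fails, for example, for
\begin{displaymath}
  T \ = \ \vline\tableau{3 & 2 \\ 1 \\\hline} \ \in \ \Key(1,2), \qquad i = 2 .
\end{displaymath}
Here $2$ and $3$ share the top row, but the column reading order is $1,3,2$, so $b,c,d$ are the cells of $1,3,2$ respectively; $b$ and $d$ lie in different rows, the braid does not apply, and since $c$ carries $i+1$ the map $d_2$ applies $\swap_{1}$, exchanging $1$ and $2$. (A symmetric counterexample for your claim about the pair $i-1,i$ is the element of $\Key(2,2)$ with rows $4,3$ and $2,1$, where $\swap_{2}$ fires.) The conclusion you want---that a swap never exchanges two entries sharing a row---is true, but it follows from a dichotomy on the \emph{cells} $b,d$, not on value pairs: whichever swap fires, it exchanges the entries of the ends $b$ and $d$, and if $b,d$ shared a row then either $c$ lies in that row as well (forcing $i+1,i,i-1$ adjacent in one row, whence $c$ has value $i$ and $d_i$ fixes $T$) or else the braid case, which is tested first, would have applied. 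This cell-based dichotomy is essentially how the paper argues.

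The second issue is that you defer exactly the part the paper proves directly: preservation of the column-inversion condition and validity of the braid's image. The paper's argument is short and structural. For the braid, the key-tableau column condition forces the configuration to be precisely the one in Figure~\ref{fig:dual-SKT}: the middle cell $c$ can never carry the value $i$, and $c$ must sit above the row-sharing pair, so legality of the image is read off from that single picture. For the swaps, the same condition shows that $b$ and $d$ lie in different rows \emph{and} different columns, after which every comparison in the rows and columns of $b,d$ (including the existential witnesses) is unchanged. Your fallback of a finite, possibly computer-assisted enumeration is more delicate than you suggest, because realizability of a local configuration depends on cells outside the triple: key diagrams are left-justified, so, for instance, the configuration with $i+1,i$ adjacent in a row and $i-1$ below $i$ in its column is impossible precisely because the cell forced to exist immediately left of $i-1$ would need a value strictly between $i-1$ and $i+1$ and distinct from $i$. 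Any enumeration must carry such ambient data (witness cells and forced cells), which is what the paper's two-case argument avoids.
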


\begin{proof}
  Let $T\in\Key(a)$ and, with notation as in Definition~\ref{def:key-deg}, suppose $b,d$ are in the same row and $c$ is not. We use the key tableaux condition that if $i<k$ are in a column with $i$ above $k$, then there is a $j>i$ immediately right of $k$ to argue that we have the case depicted in the left of Figure~\ref{fig:dual-SKT}. First we claim that $c$ does not have value $i$. If it did, then $b$ must have value $i+1$ and $d$ value $i-1$, in which case they must be adjacent in their row. Since $i$ lies between in column reading order, it must lie above $i+1$, a contraction of the key tableaux condition since $i<i+1$ but $i>i-1$, or below $i-1$, a contradiction again since anything to the right of $i$ must be smaller than $i-1$. Therefore $c$ is either $i-1$, in which case it cannot be below $i$ since the entry to its left will be below and larger than $i+1$, or $c$ is $i+1$, in which case it cannot sit below $i-1$ since any entry to its right will be below and smaller than $i-1$. Thus we have $c$ above $b$ and $d_i(T) = \braid_i(T)$ acts as shown in Figure~\ref{fig:dual-SKT}.

  Next suppose $b$ and $d$ lie in different rows. The same key tableaux condition ensures that if $c$ does not have entry $i$, then $b$ and $d$ must also lie in different columns. In this case, all entries in the same row or column as $b$ or $d$ compare the same with $i$ and $i\pm 1$, so the key tableaux conditions are preserved by whichever of $\swap_{i-1}$ or $\swap_i$ applies. 
\end{proof}

\begin{definition}
  For a standard key tableau $T$ with run decomposition $(\tau^{(k)}|\ldots|\tau^{(1)})$, the \emph{descent composition of $T$}, denoted by $\Des(T)$, is the strong composition $\Des(T) = (|\tau^{(1)}|,\ldots,|\tau^{(k)}|)$. 
  \label{def:key-Des}
\end{definition}

Again, notice the reversal of lengths. Note also that $\Des(T) = \flatten(\des(T))$.

It is clear that inserting or deleting rows with no cells does not change the allowable fillings of the standard key tableaux. That is, there is an obvious $\Des$-preserving bijection between $\Key(a)$ and $\Key(b)$ whenever $\flatten(a) = \flatten(b)$. The $\Des$-preserving bijection between $\Key(a)$ and $\Key(b)$ whenever $\sort(a) = \sort(b)$ is less obvious. It follows as a corollary to the following.

\begin{theorem}
  The maps $\{d_i\}$ give a dual equivalence for $(\Key(a),\Des)$ consisting of a single equivalence class. In particular, we have
  \begin{equation}
    s_{\lambda} = \sum_{T \in \Key(a)} F_{\Des(T)},
  \end{equation}
  for any partition $\lambda$ and weak composition $a$ such that $\sort(a) = \lambda$.
  \label{thm:dual-SKT}
\end{theorem}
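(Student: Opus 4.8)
The plan is to verify the two conditions of Definition~\ref{def:deg} for the involutions $\{d_i\}$ of Definition~\ref{def:key-deg} acting on $(\Key(a),\Des)$, and then to identify the single Schur function they produce. Well-definedness is already supplied by Lemma~\ref{lem:dual-SKT}, so the content lies in the two axioms. The structural fact driving everything is \emph{locality}: each $d_i$ rearranges only the entries $i-1,i,i+1$ among the three (fixed) cells they occupy, leaving every other entry in place, and which of the braid, swap, or identity cases applies depends only on the rows and column reading order of those three cells. Condition~(ii) then follows immediately: when $|i-j|\geq 3$ the triples $\{i-1,i,i+1\}$ and $\{j-1,j,j+1\}$, together with the cells carrying them, are disjoint, so $d_i$ and $d_j$ rearrange entries within disjoint cell sets and each is governed only by its own triple; hence they commute.

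The heart of the proof is condition~(i). I would fix $h\leq i$ with $i-h\leq 3$ and consider a class $[T]_{(h,i)}$ generated by $d_h,\ldots,d_i$. By locality these maps move only the \emph{window} entries $\{h-1,\ldots,i+1\}$, which throughout the class occupy a fixed set of $i-h+3$ cells while every other entry stays put; moreover $\Des_{(h-1,i+1)}$ records only the descents among the window entries. Consequently the class, and its restricted descent data, depend only on the relative arrangement of these at most six entries against the fixed surrounding frame, and there are only finitely many such local configurations. I would then check that in each configuration the restricted generating function $\sum_{U\in[T]_{(h,i)}}F_{\Des_{(h-1,i+1)}(U)}$ is a single Schur function: for $i-h=0$ by hand (a fixed point gives $s_{(3)}$ or $s_{(1,1,1)}$, while a nontrivial two-element class pairs the restricted descent compositions $(2,1)$ and $(1,2)$, giving $s_{(2,1)}$), and for $i-h=1,2,3$ by the analogous finite check. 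The conceptual reason the output is always a single Schur function is that the swap cases of $d_i$ coincide exactly with Haiman's dual equivalence involutions on standard Young tableaux read in column order, with the braid case handling the lone configuration in which a bare transposition of values would leave $\Key(a)$; thus the restricted classes are $\Des$-isomorphic to those of standard Young tableaux, for which the statement is \cite{Ass15}. I expect this verification, and in particular pinning down precisely when the braid rather than a swap is forced, to be the main obstacle.

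Granting (i) and (ii), the family $\{d_i\}$ is a dual equivalence for $(\Key(a),\Des)$, so by Theorem~\ref{thm:positivity} the set $\Key(a)$ decomposes into dual equivalence classes each contributing one Schur function, giving $\sum_{T\in\Key(a)}F_{\Des(T)}=\sum_c s_{\mu_c}$ with nonnegative integer multiplicities. To pin down the classes I would compute this generating function independently: the slide expansion \eqref{e:key-fund-des}, applied to $0^m\times a$, reads $\key_{0^m\times a}=\sum_{T\in\Key(a)}\fund_{0^m\times\des(T)}$, since prepending zeros to $a$ only prepends zeros to each weak descent composition, and letting $m\to\infty$ via Propositions~\ref{prop:key-limit} and~\ref{prop:fund-limit} together with $\Des(T)=\flatten(\des(T))$ yields $\sum_{T\in\Key(a)}F_{\Des(T)}=s_{\sort(a)}$. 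Comparing the two expressions and using linear independence of Schur functions (there can be no cancellation among the nonnegative multiplicities) forces a single class with $\mu_c=\sort(a)$. This simultaneously establishes that $\{d_i\}$ consists of one equivalence class and the claimed identity $s_\lambda=\sum_{T\in\Key(a)}F_{\Des(T)}$ for $\lambda=\sort(a)$.
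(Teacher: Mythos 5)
Your overall architecture is genuinely different from the paper's: the paper never verifies the dual equivalence axioms locally for $\Key(a)$, but instead constructs an explicit global bijection $\Phi:\Key(a)\rightarrow\SYT(\sort(a))$ (let cells fall to $\sort(a)$, sort columns, complement entries $i\mapsto n-i+1$), shows $\Phi(d_i(T))=d_{n-i+1}(\Phi(T))$ and that descents are complemented, and then imports the single-class dual equivalence on $\SYT$ together with the symmetry of Schur functions. Your plan --- local verification of the axioms, then identification of the generating function by stabilization and linear independence of Schur functions --- could in principle work, and your final linear-independence step (a sum of Schur functions, one per class, equal to a single Schur function forces a single class) is sound.

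However, there is a genuine gap in your stabilization step. The identity $\key_{0^m\times a}=\sum_{T\in\Key(a)}\fund_{0^m\times\des(T)}$ is false whenever $\Key(a)$ contains virtual tableaux, and this is exactly the delicate case. Prepending zeros to the shape does \emph{not} merely prepend zeros to each weak descent composition: it raises every cell by $m$ rows, which converts tableaux with $\des(T)=\varnothing$ into non-virtual tableaux with honest weak descent compositions. Concretely, for $a=(0,3,0,2)$ the fourth tableau of Figure~\ref{fig:key-tableau} has $\des(T)=\varnothing$ but $\Des(T)=(1,2,2)$; under your identity its contribution to $\key_{0^m\times a}$ is $\fund_{\varnothing}=0$ for every $m$, so your limit computation would yield
\begin{equation*}
  s_{(3,2)} \ = \ F_{(3,2)}+F_{(2,2,1)}+F_{(1,3,1)}+F_{(2,3)},
\end{equation*}
which is false --- the term $F_{(1,2,2)}$ coming from the virtual tableau is missing. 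This is precisely the phenomenon the paper isolates with its notion of $\fund$-stability. The fix is not difficult but must be said: use the canonical $\Des$-preserving bijection $\Key(a)\cong\Key(0^m\times a)$, observe that for $m$ sufficiently large \emph{every} element of $\Key(0^m\times a)$ is non-virtual and its weak descent composition flattens to $\Des(T)$ (and gains only leading zeros as $m$ grows further), and then apply Propositions~\ref{prop:key-limit} and~\ref{prop:fund-limit} to the non-virtual expansion of $\key_{0^m\times a}$. A secondary, lesser issue: your justification of axiom (i) via ``the restricted classes are $\Des$-isomorphic to those of standard Young tableaux'' glosses over the fact that key-tableau descents are reversed relative to $\SYT$ descents; the paper handles this with the complementation $i\mapsto n-i+1$ and the invariance of $s_\lambda$ under reversing descent compositions, and your finite local check would need the same care.
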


\begin{proof}
  Consider the map $\Phi$ on $\Key(a)$ defined by letting the cells of $T$ fall to shape $\sort(a)$, then sorting the columns to descend upward, and replacing $i$ with $n-i+1$. After letting entries fall and sorting columns, the rows will necessarily be decreasing left to right, and so $\Phi(T)\in\SYT(\sort(a))$. Note that $i+1$ lies strictly right of $i$ in $T$ if and only if $n-i$ lies strictly right of $n-i+1$ in $\Phi(T)$. In particular, $i\in\Des(T)$ if and only if $n-i\in\Des(\Phi(T))$.

  Moreover, we claim that $\Phi(d_i(T)) = d_{n-i+1}(\Phi(T))$. To see this, note that, by the ever useful key tableaux condition, if $i$ and $i+1$ appear in the same column of $T$, $i+1$ must be above $i$ since anything to the right of $i+1$ is smaller than $i$. Therefore the column reading word for $T$ restricted to $i-1,i,i+1$ maps to that for $\Phi(T)$ restricted to $n-i,n-i+1,n-i+2$. Then $\swap_{i-1}$ and $\swap_i$ correspond under $\Phi$ to exchanging the larger two and smaller two entries, respectively, and, upon sorting columns, $\braid_i$ corresponds to swapping the smaller two, thus establishing the claim. 

  The theorem now follows from the symmetry of Schur functions, and the corollary from the super-standard characterization of tableaux.
\end{proof}

\begin{figure}[ht]
  \begin{displaymath}
    \begin{array}{ccccccccc}
      \vline\tableau{5 & 4 \\ \\ 3 & 2 & 1 \\ \\\hline} &
      \raisebox{-\cellsize}{$\stackrel{\displaystyle\stackrel{d_3}{\Longleftrightarrow}}{\scriptstyle d_4}$}  &
      \vline\tableau{5 & 3 \\ \\ 4 & 2 & 1 \\ \\\hline} &
      \raisebox{-0.5\cellsize}{$\stackrel{d_2}{\longleftrightarrow}$} &
      \vline\tableau{5 & 1 \\ \\ 4 & 3 & 2 \\ \\\hline} &
      \raisebox{-0.5\cellsize}{$\stackrel{d_4}{\longleftrightarrow}$} &
      \vline\tableau{3 & 1 \\ \\ 5 & 4 & 2 \\ \\\hline} &
      \raisebox{-\cellsize}{$\stackrel{\displaystyle\stackrel{d_2}{\Longleftrightarrow}}{\scriptstyle d_3}$}  &
      \vline\tableau{2 & 1 \\ \\ 5 & 4 & 3 \\ \\\hline}
    \end{array}
  \end{displaymath}
  \caption{\label{fig:key-deg}Dual equivalence for $\Key(0,3,0,2)$.}
\end{figure}

\begin{theorem}
  Given a weak composition $a$, the involutions $d_i$ give a weak dual equivalence for $(\Key(a),\des)$ consisting of a single equivalence class. 
  \label{thm:deg-key}
\end{theorem}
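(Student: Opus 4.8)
The plan is to verify the two axioms of a weak dual equivalence (Definition~\ref{def:deg-weak}) for the family $\{d_i\}$, which by Lemma~\ref{lem:dual-SKT} already consists of well-defined involutions on $\Key(a)$. Two of the required pieces are immediate from the ordinary dual equivalence of Theorem~\ref{thm:dual-SKT}. For the commutation axiom~(ii), observe from Definition~\ref{def:key-deg} that $d_i$ moves only the cells carrying $i-1,i,i+1$ and that the rule selecting $\braid_i$, $\swap_{i-1}$, or $\swap_i$ inspects only those three cells; hence for $|i-j|\geq 3$ the maps $d_i$ and $d_j$ act on disjoint triples of entries and commute. For the single-class claim, the involutions here are literally those of Theorem~\ref{thm:dual-SKT}, so the decomposition of $\Key(a)$ into $\{d_i\}$-classes is the same one shown there to consist of a single class.

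The substance of the theorem is axiom~(i). Since it only concerns windows with $i-h\leq 3$, it is a local statement about the at most six consecutive entries $L=\{h-1,\ldots,i+1\}$. The crucial observation is that $\braid$ and $\swap$ permute entries among cells but never alter the underlying set of occupied cells, so every $U\in[T]_{(h,i)}$ agrees with $T$ outside $L$ while the entries of $L$ range over a single fixed set $C$ of $i-h+3$ cells. I would pass to a smaller standard key tableau by deleting from each $U$ the cells carrying entries $<h-1$ or $>i+1$, left-justifying the surviving cells within their rows so that the shape becomes a key diagram $b$, and relabeling the entries of $L$ in increasing order by $1,\ldots,i-h+3$. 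Writing $U|_C$ for the result, the plan is to prove that $U|_C\in\Key(b)$ for a shape $b$ independent of $U$, that $U\mapsto U|_C$ is a bijection from $[T]_{(h,i)}$ onto $\Key(b)$ intertwining $d_h,\ldots,d_i$ with $d_2,\ldots,d_{i-h+2}$, and --- since left-justification preserves row indices while the truncation defining $\des_{(h-1,i+1)}$ records runs in their original rows --- that $\des_{(h-1,i+1)}(U)=\des(U|_C)$. Granting these, the restricted class is a single $\{d_j\}$-class carried bijectively onto $\Key(b)$, so \eqref{e:key-fund-des} applied to $\Key(b)$ yields
\[
  \sum_{U\in[T]_{(h,i)}}\fund_{\des_{(h-1,i+1)}(U)}
  \;=\;\sum_{V\in\Key(b)}\fund_{\des(V)}\;=\;\key_b,
\]
which is exactly axiom~(i) with $b$ a weak composition of $i-h+3$.

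The main obstacle is the window-restriction correspondence itself, above all that deleting the inactive entries and left-justifying truly returns a standard key tableau: one must check that the column-inversion condition of Definition~\ref{def:key-tab} survives the removal of cells and the resulting shift of columns, and that the map is onto $\Key(b)$, so that $[T]_{(h,i)}$ is identified with $\Key(b)$ as a $\{d_j\}$-structure carrying $\des_{(h-1,i+1)}$ onto $\des$. Each ingredient is local: the tableau conditions are imposed row-by-row and column-by-column, and $d_j$ together with the truncated $\des_{(h-1,i+1)}$ read only the relative rows and columns of the active entries. Since left-justification preserves every row index, the identity $\des_{(h-1,i+1)}(U)=\des(U|_C)$ holds regardless of how many empty rows separate the active rows, so the single correspondence handles all shapes $b$ uniformly and the conclusion is then immediate from \eqref{e:key-fund-des}. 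This parallels, and refines to the key basis, the local analysis already carried out for $(\Key(a),\Des)$ in Theorem~\ref{thm:dual-SKT} and for reduced words in Theorem~\ref{thm:deg-red}, so for the bounded windows $i-h\leq 3$ the residual verification can be done by hand for $i-h=0$ and by the same finite check thereafter.
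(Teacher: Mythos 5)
Your treatment of the commutation axiom and of the single-class claim is fine and matches the paper, but your reduction of axiom~(i) rests on the identity $\des_{(h-1,i+1)}(U)=\des(U|_C)$, and that identity is false. By definition, $\des_{(h-1,i+1)}(U)$ is obtained by deleting entries from the \emph{global} composition $\des(U)$ while keeping each surviving block at the row index the global computation assigned it; those indices $t_j=\min(\row(\tau^{(j)}_\ell),\,t_{j+1}-1)$ depend on entries outside the window in two ways: a global run block may straddle the window boundary, so an out-of-window entry in the block drags its index down through the minimum, and the cap $t_{j+1}-1$ propagates downward from blocks lying entirely above the window. Recomputing $\des$ on your restricted tableau $U|_C$ erases both effects. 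Concretely, take $a=(0,3,0,2)$ and let $T$ be the second tableau of Figure~\ref{fig:key-tableau} (row $4$ filled by $5\,3$, row $2$ by $4\,2\,1$), with window $h=i=2$, i.e.\ entries $\{1,2,3\}$. Globally the run decomposition is $5\,|\,4\,3\,|\,2\,1$; the block $\{4,3\}$ gets index $2$ because the out-of-window entry $4$ sits in row $2$, and the block $\{2,1\}$ is then capped at index $1$, so $\des_{(1,3)}(T)=(2,1,0,0)$. Your $T|_C$ has $3$ alone in row $4$ and $2\,1$ in row $2$, giving $\des(T|_C)=(0,2,0,1)\neq(2,1,0,0)$. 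Over the class $\{T,\braid_2(T)\}$ the two procedures produce genuinely different polynomials: the sum required by axiom~(i) is $\fund_{(2,1,0,0)}+\fund_{(1,2,0,0)}=\key_{(1,2,0,0)}$, whereas your procedure yields $\fund_{(0,2,0,1)}+\fund_{(1,2,0,0)}=\key_{(0,2,0,1)}$. So even if your window-restriction bijection onto $\Key(b)$ were fully established, it would verify a statement about the wrong generating polynomial, not axiom~(i).

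This dependence on out-of-window data is exactly what the paper's proof confronts head on: it analyzes $\des_{(h-1,i+1)}$ directly, and in the braid case, for instance, shows the class contributes $\fund_{(0^{m},2,0^{n},1)}+\fund_{(0^{m-1},1,2)}=\key_{(0^{m},2,0^{n},1)}$, where the exponent $n$ is governed by whether the index of the block containing $i+1$ is forced down by entries or blocks outside the window. The theorem holds because each such sum is a key polynomial whose \emph{shape varies} with that external data, not because restriction commutes with $\des$ --- the same subtlety flagged in the remark after Definition~\ref{def:des-std}. To repair your argument you would need to replace the clean restriction identity with a statement that tracks these forced indices, at which point you are essentially redoing the paper's case analysis.
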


\begin{proof}
  The action of $d_i$ on $T\in\Key(a)$ is completely determined by the positions $i+1,i,i-1$, and the relative positions of cells other than these remains unchanged under $d_i$. Therefore, if $|i-j|\geq 3$, then $\{i-1,i,i+1\}$ and $\{j-1,j,j+1\}$ are disjoint, the maps $d_i$ and $d_j$ commute. It remains only to consider restricted equivalence classes under $d_h,\ldots,d_i$ for $i-h \leq 3$.

  Consider the case $i-h=0$. From the proof of Lemma~\ref{lem:dual-SKT}, we have that $d_i(T)=T$ if and only if $c=i$, in which case either both or neither of $i-1,i$ is a descent of $T$, so the restricted weak descent composition flattens to either $(1,1,1)$ or $(3)$. In either case, the corresponding key polynomial is a single fundamental slide polynomial, and so the equivalence class corresponds to a single key polynomial. If $d_i(T)=\braid_i(T)$, then we may assume $T$ has $c=i+1$. Then the restricted run decomposition of $T$ is $i\!+\!1 | i i\!-\!1$, and that of $\braid_i(T)$ is $i\!+\!1 i | i\!-\!1$. Therefore $\des_{(i-1,i+1)}(T) = (0^m,2,0^n,1)$, where $n=0$ if and only if either $i+1$ lies in the row immediately above $i$ and $i-1$ or if $t_j$ of the block containing $i+1$ is forced to be $t_{j+1}-1$. Either way, we have $\des_{(i-1,i+1)}(\braid_i(T)) = (0^{m-1},1,2)$, and so the equivalence class corresponds to the polynomial $\fund_{(0^{m},2,0^{n},1)} + \fund_{(0^{m-1},1,2)} = \key_{(0^{m},2,0^{n},1)}$.  If $d_i(T)=\swap_{i-1}(T)$, and so, again, $c=i+1$. We may assume $T$ has $i-1$ left of $i$. Then the restricted run decomposition of $T$ is $i\!+\!1 i | i\!-\!1$, and that of $\swap_{i-1}(T)$ is $i\!+\!1 | i i\!-\!1$. Moreover, in this case, for both $T$ and $\swap_{i-1}(T)$, both blocks of the run decomposition must be forced to have $t_j = t_{j+1}-1$ since there is an entry larger than $i$ left of $i$ and below $i-1$. Therefore $\des_{(i-1,i+1)}(T) = (0^m,1,2)$ and $\des_{(i-1,i+1)}(\swap_{i-1}(T)) = (0^{m},2,1)$, and so the equivalence class corresponds to the polynomial $\fund_{(0^{m},1,2)} + \fund_{(0^{m},2,1)} = \key_{(0^{m},1,2)}$. The argument for $d_i(T)=\swap_{i}(T)$ is completely analogous.

  Again, one can either carry out similar analyses for the cases $i-h=1,2,3$, or, to avoid tedium, since the action of $d_i$ is determined by relative positions of these cells based on their rows and columns, there are finitely many configurations to check by computer.
\end{proof}

Under certain stability assumptions, the converse of Theorem~\ref{thm:deg-key} also holds.

\begin{definition}
  The key polynomial $\key_a$ is \emph{$\fund$-stable} if both $\key_a$ and $\key_{0^m \times a}$ have the same number of terms in their $\fund$-expansions for any $m>0$.
\end{definition}

To make this condition easier to establish, we have the following proposition proved in \cite{AS-2}.

\begin{proposition}
  The following conditions are equivalent:
  \begin{enumerate}
  \item $\Key(a)$ contains no virtual elements;
  \item the number of terms in the $\fund$-expansion of $\key_{a}$ is the size of $\SYT(\sort(a))$;
  \item both $\key_a$ and $\key_{0^m \times a}$ have the same number of terms in their $\fund$-expansions for some $m>0$;
  \item both $\key_a$ and $\key_{0^m \times a}$ have the same number of terms in their $\fund$-expansions for any $m>0$.
  \end{enumerate}
  In particular, $\key_a$ is $\fund$-stable if and only if any one of these conditions is met.
\end{proposition}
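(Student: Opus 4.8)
The plan is to identify the number of terms in the fundamental slide expansion as a single statistic in $m$ that is monotone and has no interior plateau. Write $N(m)$ for the number of terms in the $\fund$-expansion of $\key_{0^m\times a}$, so that the equalities asserted in $(3)$ and $(4)$ read $N(0)=N(m)$. First I would make $N(m)$ transparent using two inputs. By \eqref{e:key-fund-des} the expansion runs over non-virtual standard key tableaux, so $N(m)$ is at most the number of non-virtual elements of $\Key(0^m\times a)$; equality needs that this expansion is multiplicity-free, i.e. that $\des$ is injective on non-virtual standard key tableaux. This is the essential lemma: it is part of \cite{AS-2}, and is consistent with $x^b$ being the unique dominance-minimal monomial of $\fund_b$. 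Second, Theorem~\ref{thm:dual-SKT} gives a bijection $\Key(0^m\times a)\to\SYT(\sort(a))$, and since $\sort(0^m\times a)=\sort(a)$ the total count $|\SYT(\sort(a))|$ does not depend on $m$. Combining, $N(m)=|\SYT(\sort(a))|-V(m)$, where $V(m)$ is the number of virtual elements of $\Key(0^m\times a)$; this already gives $(1)\Leftrightarrow(2)$, since $N(0)=|\SYT(\sort(a))|$ exactly when $V(0)=0$.

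Next I would establish monotonicity. Prepending a zero shifts every cell of every key diagram up one row, preserving all relative positions and hence the run decomposition, so each run-start row index increases by one and the quantity $v_T:=\min_j\bigl(\row(\tau^{(j)}_1)-(j-1)\bigr)$ increases by exactly one; as $T$ is virtual precisely when $v_T\le 0$, a non-virtual tableau stays non-virtual, $V(m)$ is non-increasing, and $V(m)=0$ for all large $m$ by Proposition~\ref{prop:key-limit}. Thus $N$ is non-decreasing and stabilizes at $|\SYT(\sort(a))|$. Now $(2)\Rightarrow(4)$ is the sandwich $N(0)\le N(m)\le|\SYT(\sort(a))|=N(0)$, $(4)\Rightarrow(3)$ is trivial, and $(4)\Rightarrow(2)$ also follows directly by taking $m$ large, where $N(m)=|\SYT(\sort(a))|$.

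The one substantial implication is $(3)\Rightarrow(2)$, and this is where I expect the real work. It suffices to show $N$ has no interior plateau, i.e. that $N(j)=N(j+1)$ forces $N(j+1)=|\SYT(\sort(a))|$; then $N(0)=N(m)$ with $m>0$ forces $N$ constant on $[0,m]$, hence already maximal at $0$, which is $(2)$. By the shift $v_T(m)=v_T(0)+m$, this is equivalent to the statement that whenever $\Key(a)$ has a virtual element there is one with $v_T=0$ exactly, so that prepending a single zero strictly decreases the virtual count; phrased over all tableaux, the set $\{v_T:T\in\Key(a)\}$ should meet every integer between its minimum and $0$. The natural tool is the single-equivalence-class structure of Theorem~\ref{thm:deg-key}: the graph on $\Key(a)$ generated by the $d_i$ is connected, so an intermediate-value argument would give a contiguous block of values once one controls how $v_T$ varies along each edge.

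The delicate point, and the main obstacle, is that a braid move $\braid_i$ can transport an entry across several rows, so $v_T$ need not change by a single unit along every edge, and the intermediate-value step cannot be applied naively. I would therefore aim to produce a borderline tableau directly: starting from a virtual tableau whose lowest run realizes the minimal $v_T$, repeatedly apply the $d_i$ that raise that offending run one row at a time until its contribution to $v_T$ first reaches $0$, verifying at each step that such a unit-step move exists and does not create a lower obstruction elsewhere. Establishing that a monotone unit-step path to a $v_T=0$ tableau always exists is the crux of the argument, and I would reduce the local verifications to the same finite, computer-checkable analysis of the action of $d_i$ on small windows that is used in the proof of Theorem~\ref{thm:deg-key}.
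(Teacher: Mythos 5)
First, a point of orientation: the paper does not prove this proposition itself --- it is explicitly quoted from \cite{AS-2} ("the following proposition proved in \cite{AS-2}") --- so your attempt can only be measured against what a complete proof requires, not against an argument in the text. Much of your reduction is sound. Writing $N(m)$ for the number of terms of $\key_{0^m\times a}$, using $|\Key(a)|=|\SYT(\sort(a))|$ from Theorem~\ref{thm:dual-SKT}, and observing that prepending a zero preserves run decompositions while shifting every row index up by one, so that the statistic $v_T=\min_j\bigl(\row(\tau^{(j)}_1)-(j-1)\bigr)$ satisfies non-virtuality $\Leftrightarrow v_T\geq 1$ and shifts to $v_T+m$, you correctly obtain $N(m)=|\SYT(\sort(a))|-V(m)$ with $V$ non-increasing and eventually zero. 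This gives $(1)\Leftrightarrow(2)$, $(2)\Rightarrow(4)$, $(4)\Rightarrow(3)$, and $(4)\Rightarrow(2)$ cleanly. (A caveat: identifying $N(m)$ with the number of non-virtual tableaux uses injectivity of $\des$ on non-virtual elements of $\Key(0^m\times a)$, which you invoke from \cite{AS-2} without proof; either prove that lemma or read "number of terms" as counted with multiplicity, which makes it unnecessary.)

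The genuine gap is exactly where you locate it, and you do not close it: the implication $(3)\Rightarrow(2)$, equivalently the claim that whenever $\Key(a)$ contains a virtual tableau it contains one with $v_T=0$ exactly. Everything else in the proposition is routine bookkeeping; this claim is its entire content, and in your proposal it remains a conjecture with a proposed strategy. Moreover the strategy as described faces two concrete obstacles. First, you cannot choose how $d_i$ acts: the move available at a given tableau is dictated by the positions of $i-1,i,i+1$, and, as you yourself observe, a braid can transport an entry across several rows; nothing you say rules out that at a minimizing virtual tableau every available move jumps $v_T$ from a negative value directly past $0$, so the existence of a "monotone unit-step path" is precisely what needs proof and is not supplied. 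Second, the fallback to a finite computer check is not available here: the local verifications in the proof of Theorem~\ref{thm:deg-key} concern windows of at most six consecutive entries and only their relative positions, whereas $v_T$ is a global statistic --- a cascading minimum over all runs involving absolute row indices of the entire tableau --- so "does not create a lower obstruction elsewhere" is not a condition on a bounded window and does not reduce to finitely many configurations. As it stands, your argument proves $(1)\Leftrightarrow(2)\Leftrightarrow(4)\Rightarrow(3)$ but not the return implication from $(3)$, which is the one that makes the stability criterion usable.
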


\begin{definition}
  A weak dual equivalence for $(\mathcal{A},\des)$ is \emph{stable} if the restricted dual equivalence classes of degrees up to $6$ are $\fund$-stable key polynomials.
  \label{def:deg-weak-stable}
\end{definition}

While Definition~\ref{def:deg-weak-stable} looks like a local condition, the following result shows that it is global.

\begin{theorem}
  Let $\mathcal{A}$ be a set of combinatorial objects for which $\des$ is never $\varnothing$ (i.e. $\mathcal{A}$ has no virtual elements). If $\{\psi_i\}$ is a stable weak dual equivalence for $(\mathcal{A},\des)$, and $U \in \mathcal{A}$, then
  \begin{equation}
    \sum_{T \in [U]} \fund_{\des(T)} \ = \ \key_{a}
  \end{equation}
  for some key-stable weak composition $a$. In particular, the fundamental slide generating polynomial for $\mathcal{A}$ is key positive.
  \label{thm:positivity-key}
\end{theorem}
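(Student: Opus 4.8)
The plan is to reduce to the already-established symmetric case of Theorem~\ref{thm:positivity} by passing to stable limits, and then to lift the resulting combinatorial isomorphism from the quasisymmetric level back to the polynomial level using a distinguished anchor.

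First I would pass to the flattened descent map $\Des := \flatten \circ \des$ and claim that $\{\psi_i\}$ is an ordinary dual equivalence for $(\mathcal{A},\Des)$ in the sense of Definition~\ref{def:deg}. Axiom (ii) is identical to axiom (ii) of Definition~\ref{def:deg-weak}. For axiom (i), fix $i-h\le 3$ and $T\in\mathcal{A}$; weak axiom (i) gives a weak composition $a$ of $i-h+3$ with $\sum_{U\in[T]_{(h,i)}}\fund_{\des_{(h-1,i+1)}(U)}=\key_a$, and the stability hypothesis (Definition~\ref{def:deg-weak-stable}) forces $\key_a$ to be $\fund$-stable since $i-h+3\le 6$. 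This is exactly the data needed to prepend zeros: $\fund$-stability guarantees $\sum_{U\in[T]_{(h,i)}}\fund_{0^m\times\des_{(h-1,i+1)}(U)}=\key_{0^m\times a}$ for every $m$, whereupon letting $m\to\infty$ and invoking Propositions~\ref{prop:fund-limit} and \ref{prop:key-limit} yields $\sum_{U}F_{\Des_{(h-1,i+1)}(U)}=s_{\sort(a)}$, a Schur function indexed by a partition of $i-h+3$. Thus $(\mathcal{A},\Des)$ meets Definition~\ref{def:deg}, and Theorem~\ref{thm:positivity} applies: every class satisfies $\sum_{T\in[U]}F_{\Des(T)}=s_\lambda$ for a partition $\lambda$, and by Definition~\ref{def:rectification} there is a color-preserving, $\Des$-preserving rectification $\Phi_U\colon[U]\to\SYT(\lambda)$ intertwining $\psi_i$ with Haiman's $d_i$.

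Next I would fix the shape. By Proposition~\ref{prop:super-SYT} there is a unique super-standard $Y\in\SYT(\lambda)$ with $\Des(Y)=\lambda$; set $T_0:=\Phi_U^{-1}(Y)$ and $a:=\des(T_0)$, so that $\flatten(a)=\Des(T_0)=\Des(Y)=\lambda$ and hence $\sort(a)=\lambda$. Theorem~\ref{thm:dual-SKT} shows $(\Key(a),\Des)$ is itself a single dual equivalence class, so by Definition~\ref{def:rectification} it too admits a color-preserving, $\Des$-preserving rectification $\Phi_K\colon\Key(a)\to\SYT(\lambda)$ intertwining the key-tableau involutions $d_i$ with Haiman's $d_i$. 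Since both rectifications preserve $\Des$ and $Y$ is the unique element of $\SYT(\lambda)$ with descent composition $\lambda$, the yamanouchi key tableau $Y_a$ (which has $\des(Y_a)=a$ by Proposition~\ref{prop:super-Key}, whence $\Des(Y_a)=\lambda$) satisfies $\Phi_K(Y_a)=Y$. Composing, $\Theta:=\Phi_K^{-1}\circ\Phi_U\colon[U]\to\Key(a)$ is a bijection that preserves $\Des$, intertwines $\psi_i$ with $d_i$, and sends the anchor $T_0$ to $Y_a$.

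It then remains to upgrade $\Theta$ from $\Des$-preserving to $\des$-preserving, and this is where the real work lies. The two maps agree at the anchor, where $\des(T_0)=a=\des(Y_a)$; the idea is to propagate this agreement along the edges of the connected equivalence class. Across a color-$i$ edge the weak descent composition changes only in the three relevant runs, and the degree-zero and low-degree restricted pieces supplied by weak axiom (i)—which are $\fund$-stable by hypothesis—pin down not merely the flattened change but the positions of the nonzero entries of $\des$. The crux is to show that this local rule for the change in $\des$ depends only on the color $i$ and the current value of $\des$, and is identical to the rule governing $d_i$ on $\Key(a)$; granting this, induction from the common anchor gives $\des(\Theta(T))=\des(T)$ for all $T$. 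With $\Theta$ thus $\des$-preserving I conclude $\sum_{T\in[U]}\fund_{\des(T)}=\sum_{S\in\Key(a)}\fund_{\des(S)}=\key_a$, and since $\des$ is never $\varnothing$ on $\mathcal{A}$ the isomorphic set $\Key(a)$ has no virtual elements, so $\key_a$ is $\fund$-stable by the proposition characterizing $\fund$-stability. The main obstacle is precisely this zero-placement rigidity: flattening discards exactly the data one must reconstruct, and ruling out configurations that share a flattened descent composition but differ in their zero placements is what forces the careful use of $\fund$-stability at every low-degree restricted class.
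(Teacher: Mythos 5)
Your opening reduction---stability of the degree-at-most-six restricted classes turns $\{\psi_i\}$ into an ordinary dual equivalence for $(\mathcal{A},\flatten\circ\des)$, whence Theorem~\ref{thm:positivity} gives a $\Des$-preserving bijection $\Phi_U\colon[U]\to\SYT(\lambda)$---is correct and is exactly how the paper's proof begins. The genuine gap is your anchoring step. Setting $a:=\des(\Phi_U^{-1}(Y))$ forces $\flatten(a)=\Des(Y)=\lambda$, i.e.\ your candidate $a$ always has its nonzero parts in weakly decreasing order; but the weak composition in the theorem's conclusion need not have this property, so your construction can never reach it. Concretely, take $\mathcal{A}=\Key((1,2))$ with the involutions of Theorem~\ref{thm:deg-key}: this is a stable weak dual equivalence with no virtual elements, whose two elements are $T_A$ (entry $1$ in row $1$; entries $3,2$ in row $2$) with $\des(T_A)=\Des(T_A)=(1,2)$, and $T_B$ (entry $2$ in row $1$; entries $3,1$ in row $2$) with $\des(T_B)=\Des(T_B)=(2,1)$. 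The class generating polynomial is $\fund_{(1,2)}+\fund_{(2,1)}=\key_{(1,2)}$, so the theorem holds with $a=(1,2)$, whose nonzero parts are \emph{increasing}. Your recipe instead takes $\lambda=(2,1)$, anchors at $T_0=T_B$ (the element with $\Des=\lambda$), and outputs $a=\des(T_B)=(2,1)$; but $\key_{(2,1)}=\fund_{(2,1)}=x_1^2x_2$ is not the class polynomial. The underlying error is the identification $\Phi_K(Y_a)=Y$: the yamanouchi key tableau of shape $a$ has $\Des(Y_a)=\flatten(a)$, which equals $\lambda$ only when $\flatten(a)=\sort(a)$; precisely in the cases where key positivity says something beyond Schur positivity (zeros interlaced so that $\flatten(a)\neq\sort(a)$), the yamanouchi element does \emph{not} rectify to the super-standard tableau. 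Note also that with your $a=(2,1)$ the target $\Key((2,1))$ contains a virtual element, contradicting your closing inference that it has none.

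Beyond the anchor, your third step is a statement of the problem rather than a proof: the ``crux'' claim---that the change of $\des$ across a $\psi_i$-edge depends only on $i$ and the current value of $\des$, and agrees with the rule on $\Key(a)$---is left unproved, and with your choice of $a$ it is false (in the example, across the unique $d_2$-edge, $\des$ changes from $(2,1)$ to $(1,2)$ in $\mathcal{A}$ but from $(2,1)$ to $\varnothing$ in $\Key((2,1))$). The zero-placement rigidity you correctly identify as the heart of the matter is handled in the paper by a different mechanism entirely: an induction on $n=|\lambda|$ in which $\mathcal{A}$ is split into classes under $\psi_3,\ldots,\psi_{n-1}$, the inductive hypothesis supplies a unique composition $\hat{a}^{(i)}$ of $n-1$ for each class, a cell is added back in a row $k_i$ determined by that class, and the resulting compositions $a^{(i)}$ are shown to agree across classes using the commutations $\psi_2\psi_j=\psi_j\psi_2$ for $j\geq 5$ together with the degree-at-most-six axiom. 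Any repair of your approach needs a device of this kind to pin down $a$; the super-standard anchor cannot do it.
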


\begin{proof}
  We may assume $\mathcal{A}$ has a unique equivalence class under $\{\psi_i\}$. Since each $\key_a$ appearing as the generating polynomial for a restricted equivalence class is stable, there is a $\Des$-preserving bijection $\Key(a) \rightarrow \SYT(\sort(a))$. Therefore $\psi$ induces a dual equivalence on $(\mathcal{A},\Des)$. In particular, since there is a unique equivalence class, we have a $\Des$-preserving bijection $\Phi:\mathcal{A}\stackrel{\sim}{\rightarrow}\SYT(\lambda)$ for some partition $\lambda$. Given any weak composition $a$ for which $\sort(a)=\lambda$, by Theorem~\ref{thm:dual-SKT}, we may extend $\Phi$ to a $\Des$-preserving bijection $\Psi_{a}:\mathcal{A}\stackrel{\sim}{\rightarrow}\Key(a)$. We will show by induction on the size of $\lambda$ that there exists a (unique) weak composition $a$ with $\sort(a)=\lambda$ such that $\Psi_a$ is $\des$-preserving.

  If $\lambda$ is a single row, then $\mathcal{A}$ has a single element, say $A$, and we may take $a = \des(A)$. Assume, then, that $\lambda$ has at least two rows, as we proceed by induction on $|\lambda|$. For $|\lambda| \leq 6$, this follows immediately from the definition, thus establishing the base case. Assume $|\lambda| = n > 6$, and assume the result for strictly smaller partitions. 

  Let $m = \#\{\lambda_i\}$ be the number of removable corners of $\lambda$. Break $\mathcal{A}$ into equivalence classes under $\psi_3,\ldots,\psi_{n-1}$, say $\mathcal{A}^{(1)},\ldots,\mathcal{A}^{(m)}$. By induction, for each $i$ there is a bijection $\Psi^{(i)}:\mathcal{A}^{(i)}\stackrel{\sim}{\rightarrow}\Key(\hat{a}^{(i)})$ for some unique weak composition $\hat{a}^{(i)}$ of $n-1$ such that $\des_{(2,n)}(A) = \des(\Psi^{(i)}(A))$. For fixed $i$, let $m_i\geq 0$ be the length of the row of $\sort(\hat{a}^{(i)})$ to which a cell is added to obtain $\lambda$ (since we necessarily have $\sort(\hat{a}^{(i)}) \subset \lambda$). Let $k_i$ be the lowest part of $\hat{a}^{(i)}$ that equals $m_i$ if $m_i>0$, or take $k_i$ to be the largest index that occurs as the smallest nonzero entry among $\des(A)$ for $A\in\mathcal{A}^{(i)}$. Set $a^{(i)}$ to be the weak composition of $n$ obtained by adding $1$ to $\hat{a}^{(i)}_{k_i}$. Then we can lift $\Psi^{(i)}$ to a $\des$-preserving injection $\Psi^{(i)}:\mathcal{A}^{(i)}\stackrel{\sim}{\rightarrow}\Key(a^{(i)})$ where the image is those key tableaux with $1$ in row $k_i$. We claim that $a^{(i)} = a^{(j)} = a$, in which case the injections $\Psi^{(i)}$ combine to give the desired $\des$-preserving bijection $\Psi:\mathcal{A}\stackrel{\sim}{\rightarrow}\Key(a)$.

  Using the injections $\Psi^{(i)}$, we may identify each $A \in \mathcal{A}^{(i)}$ with an element of $\Key(a^{(i)})$. We recall some basic properties inherited from dual equivalence. For any $A \in \mathcal{A}$, we may use $\psi_5,\ldots,\psi_{n-1}$ to move $4,\ldots,n$ into any positions which they can occupy with $1,2,3$ in some fixed positions. Moreover, $\psi_j \psi_2 = \psi_2 \psi_j$ for any $j\geq 5$ and $\des_{(4,n)}(\psi_2(A)) = \des_{(4,n)}(A)$. Given $i,j$, there exists $B\in\mathcal{A}^{(i)}$ and $C\in\mathcal{A}^{(j)}$ such that $C = \psi_2(B)$. Combining this, we must a $\des_{(4,n)}$-preseving bijection between the class of $B$ generated by $\psi_5,\ldots,\psi_{n-1}$. In particular, the shapes of $a^{(i)}$ and $a^{(j)}$ must agree after deleting the fixed positions for $1,2,3$ from $B$ and $C$, respectively. Therefore the result follows from the fact that the weak compositions under $\psi_2,\psi_3,\psi_4$ must give a key polynomial.
\end{proof}

As demonstrated in the proof of Theorem~\ref{thm:positivity-key}, a stable weak dual equivalence $\{\psi_i\}$ for $(\mathcal{A},\des)$ induces a $\des$-preserving map $\Psi$ from $\mathcal{A}$ to $\Key$ such that, for any dual equivalence class $\mathcal{C}$ for $\mathcal{A}$ under $\{\psi_i\}$, the restriction of $\Psi$ to $\mathcal{C}$ gives a bijection with $\Key(a)$ for some (unique) weak composition $a$.

\begin{definition}
  Given a stable weak dual equivalence $\{\psi_i\}$ for $(\mathcal{A},\des)$, we call the induced map $\Psi:\mathcal{A}\rightarrow\Key$ the \emph{weak rectification map for $\mathcal{A}$ with respect to $\{\psi_i\}$}. For $A \in\mathcal{A}$, we say that $A$ \emph{weakly rectifies} to $\Psi(A)$.
  \label{def:rectification-weak}
\end{definition}
  
While the stability condition seems restrictive, provided the set $\mathcal{A}$ behaves nicely under stabilization, we can apply it more generally. As a first application of this new framework, we note that the involutions $\mathfrak{d}_i$ on reduced expressions defined in Definition~\ref{def:dual-stanley} give a weak dual equivalence. For example, the equivalence class in Figure~\ref{fig:weak-deg-red} has generating polynomial
\begin{displaymath}
  \fund_{(0,0,3,1,0,1)} + \fund_{(0,2,2,0,0,1)} + \fund_{(0,1,3,0,0,1)} + \fund_{(2,1,2,0,0,0)} + \fund_{(1,2,2,0,0,0)} + \fund_{(1,1,3,0,0,0)} = \key_{(0,0,3,1,0,1)} .
\end{displaymath}

\begin{figure}[ht]
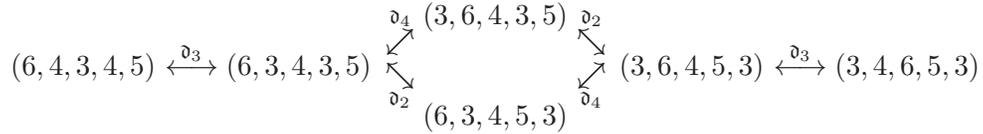

  \begin{displaymath}
    (6,4,3,4,5) \stackrel{\mathfrak{d}_3}{\longleftrightarrow} (6,3,4,3,5)
    \begin{array}{c}
      \stackrel{\mathfrak{d}_4}{\mathrel{\text{\ooalign{$\swarrow$\cr$\nearrow$}}}}
      \raisebox{2ex}{$(3,6,4,3,5)$}
      \stackrel{\mathfrak{d}_2}{\mathrel{\text{\ooalign{$\searrow$\cr$\nwarrow$}}}} \\
      \stackrel{\displaystyle\mathrel{\text{\ooalign{$\searrow$\cr$\nwarrow$}}}}{_{\mathfrak{d}_2}}
      \raisebox{-2ex}{$(6,3,4,5,3)$}
      \stackrel{\displaystyle\mathrel{\text{\ooalign{$\swarrow$\cr$\nearrow$}}}}{_{\mathfrak{d}_4}}
    \end{array}
    (3,6,4,5,3) \stackrel{\mathfrak{d}_3}{\longleftrightarrow} (3,4,6,5,3)
  \end{displaymath}
  \caption{\label{fig:weak-deg-red}A stable weak dual equivalence class for $R(1^2\times 42153)$.}
\end{figure}

\begin{lemma}
  Let $w$ be a permutation for which no element of $\R(w)$ is virtual. Then the maps $\mathfrak{d}_i$ on reduced expressions give a stable weak dual equivalence for $(\R(w),\des)$.
  \label{lem:red-weak}
\end{lemma}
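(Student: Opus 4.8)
The plan is to verify the two axioms of Definition~\ref{def:deg-weak} together with the stability condition of Definition~\ref{def:deg-weak-stable}, reusing as much of the proof of Theorem~\ref{thm:deg-red} as possible. First note that the hypothesis that no element of $\R(w)$ is virtual guarantees $\des$ is never $\varnothing$ on $\R(w)$; thus $\R(w)$ has no virtual elements and the framework of Theorem~\ref{thm:positivity-key} is applicable.

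I would dispatch axiom~(ii) immediately. As already recorded in the proof of Theorem~\ref{thm:deg-red}, each $\mathfrak{d}_i$ alters $\rho$ only in positions $i-1,i,i+1$, and the choice among $\braid$, $\swap$, and the identity depends only on the three entries there. Hence for $|i-j|\ge 3$ the supports $\{i-1,i,i+1\}$ and $\{j-1,j,j+1\}$ are disjoint and $\mathfrak{d}_i\mathfrak{d}_j=\mathfrak{d}_j\mathfrak{d}_i$, exactly as in the symmetric setting.

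The substance is axiom~(i) for $i-h\le 3$, which I would establish by transporting the analogous statement for standard key tableaux. By Theorem~\ref{thm:deg-red} the maps $\{\mathfrak{d}_i\}$ already form a dual equivalence for $(\R(w),\Des)$, so each restricted class $[\rho]_{(h,i)}$ is $\Des$-isomorphic to $\SYT(\lambda)$ for a unique partition $\lambda$ of $i-h+3$; in particular its cardinality is $|\SYT(\lambda)|$. Meanwhile Theorem~\ref{thm:deg-key} shows that the rules $d_i$ on $\Key(a)$ form a single weak dual equivalence class whose $\fund$-generating polynomial is $\key_a$. The Remark following Definition~\ref{def:dual-stanley} identifies $\mathfrak{d}_i$ on a reduced word with distinct entries with Haiman's $d_i$, and the analogous local dictionary relates the braid/swap rules on a window of a reduced word to those of Definition~\ref{def:key-deg}; using this I would produce a $\des$-preserving bijection from the restricted class onto $\Key(a)$ for the weak composition $a$ recording the restricted weak descent data, yielding $\sum_{U\in[\rho]_{(h,i)}}\fund_{\des_{(h-1,i+1)}(U)}=\key_a$. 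As in Theorem~\ref{thm:deg-red}, the finitely many window configurations of degree at most six could instead simply be checked by computer.

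The hard part will be that, unlike $\Des$, the weak descent composition is \emph{not} a purely local invariant: Definition~\ref{def:des-red} computes the row indices through the recursion $r_i=\min(\rho^{(i)}_1,r_{i+1}-1)$, which threads through the whole run decomposition, so I must control how the runs inside a window attach to the larger runs above them. This is precisely where the non-virtual hypothesis does the work: it forces every $r_i>0$, so the windows behave like non-virtual standard key tableaux and the $|\SYT(\lambda)|$ elements of the restricted class contribute distinct, non-$\varnothing$ terms. Consequently each restricted polynomial is a key polynomial $\key_a$ whose number of $\fund$-terms equals $|\SYT(\sort(a))|=|\SYT(\lambda)|$, so $\Key(a)$ is non-virtual and $\key_a$ is $\fund$-stable by the proposition characterizing $\fund$-stable key polynomials. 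This supplies the stability demanded by Definition~\ref{def:deg-weak-stable} and completes the verification.
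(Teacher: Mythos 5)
Your handling of axiom (ii) is fine, and you correctly locate the crux: $\des$ is not a local invariant of the window. But your resolution of that crux has a genuine gap. Every tool you invoke to ``transport'' the structure --- the bijection $\theta$ from the Remark after Definition~\ref{def:dual-stanley}, the $\Des$-isomorphism of restricted classes with $\SYT(\lambda)$ from Theorem~\ref{thm:deg-red}, and Theorem~\ref{thm:deg-key} --- operates at the level of standardization: it remembers only the relative order of the letters, which is exactly the information that $\Des$ sees. The weak descent composition $\des$ depends on the actual letter values, since these become the row indices $r_i$ in Definition~\ref{def:des-red}. So a $\des$-preserving bijection onto some $\Key(a)$ cannot be ``produced'' from these facts; constructing one (equivalently, identifying $a$ and verifying the slide expansion) is precisely the content of axiom (i), and your proposal never does it. Non-virtuality does not fill the hole: knowing all $r_i>0$ only guarantees that each element of a restricted class contributes a genuine term $\fund_b$ with $b\neq\varnothing$. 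A sum of $|\SYT(\lambda)|$ distinct non-virtual slide polynomials whose flattenings assemble into $s_\lambda$ is not automatically a key polynomial, because the row indices must satisfy rigid compatibilities. (Compare Figure~\ref{fig:bad-key-product}: a dual equivalence class that is $\Des$-isomorphic to $\SYT(2,2,1)$ whose slide generating polynomial is not key positive. Your hypothesis excludes its virtual element, but nothing in your argument explains why excluding $\varnothing$ terms forces the surviving row data to fit together into a key polynomial.)

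What the paper actually does --- and what any correct proof must do --- is compute the weak descent compositions explicitly in terms of the letter values, reusing the case analysis from Theorem~\ref{thm:deg-red}. For $i-h=0$: trivial classes give $\key_a=\fund_a$ with $a$ flattening to $(3)$ or $(1,1,1)$; for the pair $acb\leftrightarrow cab$ with $a<b<c$ (the $\swap_{i-1}$ case) the weak descent compositions are $(0^{a-2},1,2)$ and $(0^{a-1},2,0^{c-a-1},1)$, and one verifies the slide-polynomial identity $\fund_{(0^{a-2},1,2)}+\fund_{(0^{a-1},2,0^{c-a-1},1)}=\key_{(0^{a-1},2,0^{c-a-1},1)}$, with analogous identities for the $\swap_{i}$ and $\braid_i$ cases. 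These identities, in which the answer depends on the values $a$ and $c-a$ and not merely on relative order, are the substance of the lemma; nothing in your proposal derives them (they also deliver stability for free, since each computed $\key_a$ visibly has $|\SYT(\sort(a))|$ terms). Relatedly, your computer-check fallback is not justified as stated: for $\Des$ a bounded alphabet suffices because only relative order and adjacency matter, but for $\des$ the answer involves the unbounded gaps of zeros above, so one must first argue how the answer varies with the actual letter values --- which is what the explicit computation supplies --- before a finite check can close the cases $i-h=1,2,3$.
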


\begin{proof}
  By Theorem~\ref{thm:deg-red}, the maps $\{\mathfrak{d}_i\}$ are involutions on $\R(w)$, and $\mathfrak{d}_i$ and $\mathfrak{d}_j$ commute for $|i-j|\geq 3$. Therefore we need only consider restricted dual equivalence classes under $\mathfrak{d}_h,\ldots,\mathfrak{d}_i$ for $i-h \leq 3$. We begin with the analysis of descent compositions in the proof of Theorem~\ref{thm:deg-red} and consider the consequences for considering weak descent compositions instead.

  For $i-h=0$, $\mathfrak{d}_i$ acts trivially if and only if $\rho_{i-1}\rho_{i}\rho_{i+1}$ is weakly increasing or weakly decreasing. Since, by the reduced condition, consecutive letters may not be equal, the sequence must be strict, and so the descent composition is $(3)$ or $(1,1,1)$. Since $\key_a = \fund_a$ for $a$ any weak composition that flattens to $(n)$ or $(1^n)$, in either case the class is a single key polynomial. 

  If $\mathfrak{d}_i$ acts nontrivially, then it pairs $acb$ with $cab$ (if $\mathfrak{d}_i = \swap_{i-1}$) or $bac$ with $bca$ (if $\mathfrak{d}_i = \swap_{i}$) or $aba$ with $bab$ (if $\mathfrak{d}_i = \braid_{i}$), where $a<b<c$. Taking the first case, the run decompositions are $(ac|b)$ and $(c|ab)$, giving weak descent compositions are $(0^{a-2},1,2)$ and $(0^{a-1},2,0^{c-a-1},1)$, and this gives $\key_{(0^{a-1},2,0^{c-a-1},1)}$. Similarly, in the second case we obtain $\fund_{(0^{a-1},2,0^{b-a-1},1)} + \fund_{(0^{a-1},1,0^{b-a-1},2)} = \key_{(0^{a-1},1,0^{b-a-1},2)}$. Finally, in the third case, we have $\fund_{(0^{a-2},1,2)} + \fund_{(0^{a-1},2,1)} = \key_{(0^{a-1},2,1)}$.

  As before, a similar by hand analysis can handle the cases $i-h=1,2,3$, though it is more efficient (and certainly less error-prone) to program the involutions on a computer and check them for all reduced words on $\{1,\ldots,i-h+3\}$.
\end{proof}

In particular, we have a simple, combinatorial proof of the key positivity of Schubert polynomials. Note that, as we show below, this holds for \emph{any} permutation $w$, not only for those with no virtual reduced expressions. For example, for reduced expressions for $1^2 \times 42153$, the reduced expressions in Figure~\ref{fig:weak-deg-red} rectify to the standard key tableaux in Figure~\ref{fig:key-deg-rect}, respectively, after stabilizing by prepending $0^2$. We may equally well use the induced bijection between the corresponding reduced expressions for $42153$ and $\Key(3,1,0,1)$, where the virtual terms coincide.

\begin{figure}[ht]
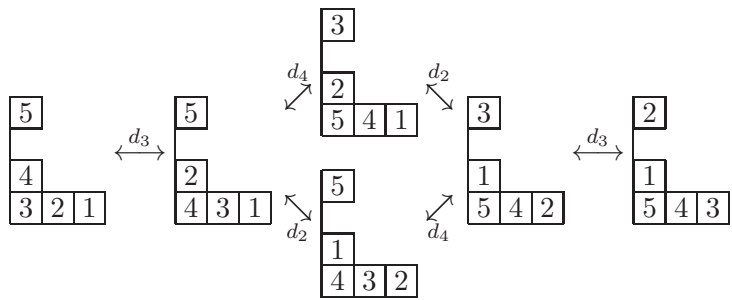

  \begin{displaymath}
    \raisebox{1\cellsize}{$\vline\tableau{5 \\ \\ 4 \\ 3 & 2 & 1}$}
    \stackrel{d_3}{\longleftrightarrow}
    \raisebox{1\cellsize}{$\vline\tableau{5 \\ \\ 2 \\ 4 & 3 & 1}$}
    \begin{array}{c}
      \stackrel{d_4}{\mathrel{\text{\ooalign{$\swarrow$\cr$\nearrow$}}}}
      \raisebox{2\cellsize}{$\vline\tableau{3 \\ \\ 2 \\ 5 & 4 & 1}$}
      \stackrel{d_2}{\mathrel{\text{\ooalign{$\searrow$\cr$\nwarrow$}}}} \\ \\
      \stackrel{\displaystyle\mathrel{\text{\ooalign{$\searrow$\cr$\nwarrow$}}}}{_{d_2}}
      \raisebox{1\cellsize}{$\vline\tableau{5 \\ \\ 1 \\ 4 & 3 & 2}$}
      \stackrel{\displaystyle\mathrel{\text{\ooalign{$\swarrow$\cr$\nearrow$}}}}{_{d_4}}
    \end{array}
    \raisebox{1\cellsize}{$\vline\tableau{3 \\ \\ 1 \\ 5 & 4 & 2}$}
    \stackrel{d_3}{\longleftrightarrow}
    \raisebox{1\cellsize}{$\vline\tableau{2 \\ \\ 1 \\ 5 & 4 & 3}$}
  \end{displaymath}
  \caption{\label{fig:key-deg-rect}The weak dual equivalence class for $\Key(3,1,0,1)$.}
\end{figure}

\begin{definition}
  Given a permutation $w$, say that a reduced expression $\rho\in\R(w)$ is \emph{yamanouchi} if it weakly rectifies to a yamanouchi key tableau.
  \label{def:yam-red}
\end{definition}
  
For example, the yamanouchi reduced expressions for $42153$ are shown in Figure~\ref{fig:yam-red}.

\begin{figure}[ht]
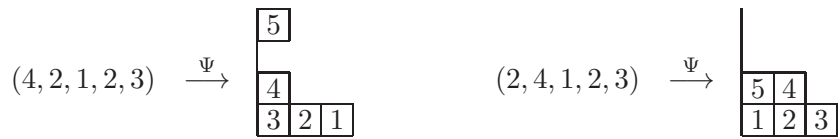

  \begin{displaymath}
    \begin{array}{cccc@{\hskip 4em}ccc}
      (4,2,1,2,3) & \stackrel{\Psi}{\longrightarrow} & \raisebox{1.5\cellsize}{$\vline\tableau{5 \\ \\ 4 \\ 3 & 2 & 1}$} & &
      (2,4,1,2,3) & \stackrel{\Psi}{\longrightarrow} & \raisebox{1.5\cellsize}{$\vline\tableau{\\ \\ 5 & 4 \\ 1 & 2 & 3}$}
    \end{array}
  \end{displaymath}
  \caption{\label{fig:yam-red}The yamanouchi elements of $\R(42153)$ and their weak rectifications.}
\end{figure}

\begin{theorem}
  For $w$ a permutation, we have
  \begin{equation}
    \schubert_w = \sum_{\substack{\rho\in\R(w) \\ \rho \ \mathrm{yamanouchi}}} \key_{\des(\rho)} = \sum_{a} c_{w,a} \key_{a},
  \end{equation}
  where $c_{w,a}$ is the number of yamanouchi reduced expressions for $w$ with weak descent composition $a$. In particular, the Schubert polynomial $\schubert_w$ is key positive.
  \label{thm:sch-key}
\end{theorem}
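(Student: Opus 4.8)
The goal is to prove that $\schubert_w$ is key positive, with the key expansion governed by yamanouchi reduced expressions. The plan is to assemble this from the machinery already developed: the weak dual equivalence $\{\mathfrak{d}_i\}$ on reduced expressions, the positivity theorem, and the yamanouchi characterization that plays the role of Proposition~\ref{prop:super-Key}. The overall strategy mirrors exactly how Corollary~\ref{cor:stanley-pos} was obtained from Theorem~\ref{thm:deg-red}, Theorem~\ref{thm:positivity}, and Proposition~\ref{prop:super-SYT} in the symmetric setting; here I would run the analogous argument one level down, replacing Schur positivity by key positivity throughout.

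First I would reduce to the stable case. Lemma~\ref{lem:red-weak} establishes that $\{\mathfrak{d}_i\}$ is a stable weak dual equivalence for $(\R(w),\des)$ precisely when $w$ has no virtual reduced expressions, and Theorem~\ref{thm:positivity-key} then guarantees that each weak dual equivalence class contributes a single key polynomial $\key_a$ to the fundamental slide generating polynomial $\sum_{\rho\in\R(w)}\fund_{\des(\rho)} = \schubert_w$. For a general permutation $w$, the idea is to stabilize: pass to $1^m \times w$ for $m$ large, where Proposition~\ref{prop:schub-limit} (combined with the fact that $\R(1^m\times w)$ has no virtual elements once $m$ is large enough, since every index is shifted up by $m$) lets us apply the stable theory, and then transport the resulting key expansion back down to $w$ via the bijection $\R(w)\leftrightarrow\R(1^m\times w)$ that shifts indices and prepends $0^m$ to weak descent compositions, noting that virtual terms match up on both sides. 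This is the maneuver already flagged in the text immediately before Figure~\ref{fig:key-deg-rect}.

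Second, having located one $\key_a$ per class, I would identify the index $a$ as the weak descent composition of the \emph{yamanouchi} representative of that class. By Definition~\ref{def:yam-red}, $\rho$ is yamanouchi exactly when its weak rectification $\Psi(\rho)$ is the yamanouchi key tableau, and Proposition~\ref{prop:super-Key} says this unique tableau is the one with $\des(\Psi(\rho)) = a$, equivalently the unique $T\in\Key(a)$ with $\des(T)=a$. Since the weak rectification map $\Psi$ (Definition~\ref{def:rectification-weak}) is $\des$-preserving and restricts to a bijection from each class onto some $\Key(a)$, each class has exactly one yamanouchi element and its weak descent composition is the indexing composition $a$. Summing over classes then yields $\schubert_w = \sum_{\rho\ \mathrm{yamanouchi}}\key_{\des(\rho)}$, and collecting terms by the value of $\des(\rho)$ gives the coefficient form $\sum_a c_{w,a}\key_a$ with $c_{w,a}\in\mathbb{N}$, establishing key positivity.

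I expect the main obstacle to be the stabilization step for general $w$, rather than the bookkeeping of the yamanouchi characterization. Concretely, I must check that the bijection $\R(w)\leftrightarrow\R(1^m\times w)$ intertwines the involutions $\mathfrak{d}_i$ with $\mathfrak{d}_{i+m}$ and sends $\des(\rho)$ to $0^m\times\des(\rho)$ compatibly, so that weak dual equivalence classes correspond and the extracted key compositions agree after stripping the leading zeros; and I must confirm that prepending zeros to $a$ does not disturb key positivity, which is legitimate since $\key_{0^m\times a}$ simply prepends zeros to $a$ as noted in the discussion preceding Proposition~\ref{prop:key-limit}. The delicate point is that for small $w$ some reduced expressions are virtual, so the equality $\schubert_w = \sum_{\rho}\fund_{\des(\rho)}$ must be read with $\fund_{\varnothing}=0$, and I must verify that the virtual elements on the $w$ side correspond exactly to those on the $1^m\times w$ side under the shift, so that the non-virtual classes—and hence the genuine key summands—are in perfect correspondence. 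Once this correspondence is pinned down, the statement follows formally.
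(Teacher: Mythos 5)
Your proposal is correct and follows essentially the same route as the paper: the stable case is exactly Lemma~\ref{lem:red-weak} combined with Theorem~\ref{thm:positivity-key}, and the general case is handled by stabilizing to $1^{\eta(w)}\times w$ and pulling the key expansion back along the index-shifting bijection, with the yamanouchi identification of the coefficients coming from the $\des$-preserving rectification together with Proposition~\ref{prop:super-Key}. The one imprecision is your claim that ``virtual elements on the $w$ side correspond exactly to those on the $1^m\times w$ side,'' which cannot be read literally since the stable side has no virtual elements; the correct formulation, and the one the paper uses, is that the \emph{non-virtual} elements of $\R(w)$ correspond exactly to those reduced expressions upstairs whose weak descent compositions (equivalently, whose images under $\Psi$) have the form $0^{\eta(w)}\times a$, so that stripping the leading zeros recovers the key expansion of $\schubert_w$ class by class.
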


\begin{proof}
  For $\schubert_w$ an $\fund$-stable polynomial, the result follows from Lemma~\ref{lem:red-weak} and Theorem~\ref{thm:positivity-key}. For $w$ not $\fund$-stable, by \cite{AS17} there exists a (specific) nonnegative integer $\eta(w)$ for which $1^{\eta(w)}\times w$ is $\fund$-stable. Let $\Psi$ be the induced $\des$-preserving map from $\R(1^{\eta(w)}\times w)$ to $\Key$. For any nonvirtual elements of $R(w)$, the corresponding reduced expressions for $\R(1^{\eta(w)}\times w)$ are precisely those that map to some standard key tableau with weak descent composition $0^{\eta(w)}\times a$. Given any weak dual equivalence class, we may pull back both the reduced expressions in $\R(1^{\eta(w)}\times w)$ that are nonvirtual in $\R(w)$ and those standard key tableaux that have at least $\eta(w)$ leading $0$'s. This gives a $\des$-preserving bijection between nonvirtual elements, so they must have the same generating polynomial. Hence $\schubert_w$ is also key positive with the same leading terms.
\end{proof}

%%%%%%%%%%%%%%%%%%%%%%%%%%%%%%%%%%%%%%%%%%%%%%%%%%%%%%%%%%%%%%%%
%
\section{Littlewood--Richardson rules}
%
%%%%%%%%%%%%%%%%%%%%%%%%%%%%%%%%%%%%%%%%%%%%%%%%%%%%%%%%%%%%%%%%
\label{sec:LRR}

%%%%%%%%%%%%%%%%%%%%%%%%%%%%%%%%%%%%%%%%%%%%%%%%%%%%%%%%%%%%%%%%
\subsection{Shuffle products and Schur products}
%%%%%%%%%%%%%%%%%%%%%%%%%%%%%%%%%%%%%%%%%%%%%%%%%%%%%%%%%%%%%%%%
\label{sec:LRR-schur}

Gessel used the shuffle product of Eilenberg and Mac Lane \cite{EM53} to give a Littlewood--Richardson rule for fundamental quasisymmetric functions \cite{Ges84}. The \emph{shuffle product} of words $A$ and $B$, denoted by $A \shuffle B$, is the set of all ways of riffle shuffling the terms of $A$, in order, with the terms of $B$, in order. 

\begin{definition}
  The \emph{shuffle product} of strong compositions $\alpha$ and $\beta$ is the formal series
  \begin{equation}
    \alpha\shuffle\beta = \sum_{C \in A \shuffle B} \Des(C),
  \end{equation}
  where $A,B$ are words in disjoint alphabets with $\Des(A)=\alpha$ and $\Des(B)=\beta$.
\label{def:shuffle}
\end{definition}

For example, to compute $(2,3) \shuffle (2,1)$, we may take descent compositions for all $\binom{8}{5}$ shuffles of $22111 \shuffle 443$, though we could equally well take shuffles of $33133 \shuffle 682$.

\begin{theorem}[\cite{Ges84}]
  For strong compositions $\alpha,\beta$, we have
  \begin{equation}
    F_{\alpha} F_{\beta} = \sum_{\gamma} [\gamma \mid \alpha \shuffle \beta] F_{\gamma},
    \label{e:shuffle}
  \end{equation}
  where $[\gamma \mid \alpha \shuffle \beta]$ denotes the multiplicity of $\gamma$ in the shuffle product $\alpha \shuffle \beta$.
  \label{thm:shuffle}
\end{theorem}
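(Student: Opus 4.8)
The plan is to pass from the refinement description of $F_\alpha$ to its ``weakly increasing sequence'' (equivalently, $P$-partition) form, and then to realize the product as a generating function over shuffles. First I would record the equivalent monomial form: writing $n=|\alpha|$ and $S(\alpha)=\{\alpha_1,\alpha_1+\alpha_2,\ldots,\alpha_1+\cdots+\alpha_{k-1}\}$ for the set of proper partial sums, one has $F_\alpha=\sum x_{i_1}\cdots x_{i_n}$, where the sum is over sequences $1\le i_1\le\cdots\le i_n$ subject to $i_j<i_{j+1}$ whenever $j\in S(\alpha)$. This is just a repackaging of \eqref{e:F_n}: a weakly increasing sequence is the same datum as a weak composition $b$ with $x^b=x_{i_1}\cdots x_{i_n}$, the strict ascents of the sequence are exactly the part-boundaries of $\flatten(b)$, and demanding a strict ascent at every index in $S(\alpha)$ is precisely the requirement that $\flatten(b)$ refine $\alpha$. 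Call a sequence meeting this constraint \emph{$\alpha$-compatible}.

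Next I would fix a total order on the disjoint union of two alphabets together with words $A,B$ satisfying $\Des(A)=\alpha$ and $\Des(B)=\beta$, and expand $F_\alpha F_\beta=\sum_{(f,g)}x^f x^g$ over $\alpha$-compatible $f$ (indexed by the positions of $A$) and $\beta$-compatible $g$ (indexed by the positions of $B$). The heart of the argument is the merge bijection $(f,g)\mapsto(C,h)$: interleave the two sequences into a single weakly increasing sequence $h$ with $x^h=x^f x^g$, recording in a shuffle $C\in A\shuffle B$ the alphabet-origin of each entry, where equal values are broken in all admissible ways. Conversely, splitting a $\Des(C)$-compatible $h$ along $C$ recovers the pair $(f,g)$. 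I would then verify that $h$ is $\Des(C)$-compatible exactly when $f$ and $g$ are $\alpha$- and $\beta$-compatible, so that the correspondence is content-preserving and descent-faithful.

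Summing over the fibers of this bijection then gives
\[ F_\alpha F_\beta=\sum_{C\in A\shuffle B}\ \sum_{h\ \Des(C)\text{-compatible}} x^h=\sum_{C\in A\shuffle B}F_{\Des(C)}=\sum_{\gamma}[\gamma\mid\alpha\shuffle\beta]\,F_\gamma, \]
where the middle equality is the compatible-sequence form applied to $C$ and the last collects shuffles by descent composition. Conceptually this is the fundamental theorem of $P$-partitions applied to the disjoint union of two labeled chains realizing $\alpha$ and $\beta$, whose linear extensions are exactly the shuffles of $A$ and $B$; that viewpoint also makes transparent the most delicate point, namely that the multiset $\{\Des(C)\}$, and hence the product, is independent of the chosen words $A,B$ and of how the two alphabets are interleaved in the ambient order, matching Definition~\ref{def:shuffle}. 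I expect the main obstacle to be precisely this descent bookkeeping across the two alphabets: one must check that cross-alphabet comparisons and tie-breaking are organized so that each shuffle $C$ is produced exactly once and that a descent of $C$ never lands inside a constant block of $h$, which is what aligns the compatibility conditions on the two sides with no over- or under-counting.
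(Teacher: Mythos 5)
The paper does not actually prove Theorem~\ref{thm:shuffle}: it is imported from Gessel \cite{Ges84} as a black box (its only role is to yield the expansion of $s_\mu s_\nu$ over $\SYT(\mu\otimes\nu)$ and, by analogy, the slide-polynomial product formula of Theorem~\ref{thm:slide}). So there is no in-paper argument to compare against; judged on its own, your proposal is correct and is essentially Gessel's original proof, i.e.\ the fundamental theorem of $P$-partitions applied to the disjoint union of two labeled chains, whose linear extensions are precisely the shuffles of $A$ and $B$. Your repackaging of \eqref{e:F_n} as a sum over $\alpha$-compatible weakly increasing sequences is the right translation of the paper's refinement condition, and the merge/split correspondence is the right engine.

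One phrase needs tightening, and it is exactly the point you flagged as delicate: in the forward direction, ties are not broken ``in all admissible ways'' but in the \emph{unique} admissible way. If a value repeats in the merged sequence $h$, compatibility of $h$ with $\Des(C)$ forbids any descent of $C$ inside that constant block, so the letters of $C$ occurring there must weakly increase; since the $A$-letters and the $B$-letters appearing in the block each already form weakly increasing runs (this is where $\alpha$- and $\beta$-compatibility of $f$ and $g$ enter) and the two alphabets are disjoint, there is exactly one interleaving of them that is weakly increasing. This uniqueness is what makes $(f,g)\mapsto(C,h)$ a bijection rather than a multivalued map; were a single pair $(f,g)$ compatible with two distinct shuffles, the right-hand side of \eqref{e:shuffle} would overcount the monomial $x^fx^g$. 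With that stated, the converse check you describe (a descent of $A$ cannot land inside a constant block of $h$) goes through, and the identity follows. As a byproduct, since the $F_\gamma$ are linearly independent, the identity shows that the multiset $\{\Des(C): C\in A\shuffle B\}$ is independent of the choice of words $A,B$, which is the well-definedness of Definition~\ref{def:shuffle} that the paper leaves implicit.
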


For example, $F_{(3,2,3)}$ appears with multiplicity $2$ in the product $F_{(2,3)} F_{(2,1)}$, corresponding to the shuffles $22414113$ and $24423111$ of $22111 \shuffle 443$.

Given partitions $\mu,\nu$, define the diagram $\mu\otimes\nu$ to be the concatentation of Young diagrams for $\mu$ and $\nu$. A \emph{standard Young tableau of shape $\mu\otimes \nu$} is a bijective filling of $\mu\otimes\nu$ with entries $\{1,2,\ldots,n\}$ such that each shape satisfies the Young tableaux conditions. Extend descent compositions to product shapes using the column words with entries of $\mu$ read before entries of $\nu$. For example, the standard Young tableau of shape $(3,2)\otimes(2,1)$ on the right side of Figure~\ref{fig:shuffle} has descent composition $(3,2,3)$.

\begin{figure}[ht]
  \begin{displaymath}
    \begin{array}{ccccccc}
      \tableau{3 & 4 \\ 1 & 2 & 5} & \times & \tableau{3 \\ 1 & 2} & \times & AABABAAB &
      \longleftrightarrow & \tableau{4 & 6 \\ 1 & 2 & 7} \otimes \tableau{8 \\ 3 & 5}
    \end{array}
  \end{displaymath}
  \caption{\label{fig:shuffle}An example of the bijection from $\SYT(3,2) \times \SYT(2,1) \times A^{5} \shuffle B^{3}$ to $\SYT((3,2)\otimes(2,1))$.}
\end{figure}

The following expansion is an immediate consequence of Theorem~\ref{thm:shuffle}.

\begin{proposition}
  For partitions $\mu,\nu$, we have
  \begin{equation}
    s_{\mu} s_{\nu} = \sum_{T \in \SYT(\mu\otimes\nu)} F_{\Des(T)}.
  \end{equation}
\end{proposition}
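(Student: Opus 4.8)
The plan is to expand each Schur function via its combinatorial definition (Definition~\ref{def:schur-F}), multiply, apply Gessel's shuffle product (Theorem~\ref{thm:shuffle}) term by term, and then recognize the resulting sum as a sum over $\SYT(\mu\otimes\nu)$ using the bijection indicated in Figure~\ref{fig:shuffle}. Concretely, I would first write
\[ s_{\mu} s_{\nu} = \Bigl(\sum_{S\in\SYT(\mu)} F_{\Des(S)}\Bigr)\Bigl(\sum_{U\in\SYT(\nu)} F_{\Des(U)}\Bigr) = \sum_{S\in\SYT(\mu)}\sum_{U\in\SYT(\nu)} F_{\Des(S)}\,F_{\Des(U)}. \]
Applying Theorem~\ref{thm:shuffle} to each product and unwinding Definition~\ref{def:shuffle}, each term $F_{\Des(S)}F_{\Des(U)}$ equals $\sum_{C} F_{\Des(C)}$, where $C$ ranges over all riffle shuffles of a fixed pair of words $A,B$ in disjoint alphabets with $\Des(A)=\Des(S)$ and $\Des(B)=\Des(U)$. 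Thus $s_{\mu}s_{\nu} = \sum_{(S,U,C)} F_{\Des(C)}$, the sum being over all triples consisting of $S\in\SYT(\mu)$, $U\in\SYT(\nu)$, and a shuffle $C$ of $|\mu|$ letters of $A$ with $|\nu|$ letters of $B$.

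It then remains to exhibit a descent-preserving bijection between these triples and $\SYT(\mu\otimes\nu)$, which is exactly the map drawn in Figure~\ref{fig:shuffle}. The shuffle word $C$ records, for each of $1,\ldots,n$, whether that value belongs to the $\mu$-block or the $\nu$-block; placing the $A$-positions (in increasing order) into the cells of $\mu$ according to $S$, and the $B$-positions into the cells of $\nu$ according to $U$, produces a filling $T$ in which both blocks satisfy the Young tableaux conditions, i.e. $T\in\SYT(\mu\otimes\nu)$. The inverse simply restricts $T$ to each block and standardizes, recovering $(S,U)$, while the splitting of $\{1,\ldots,n\}$ into $\mu$-entries and $\nu$-entries recovers $C$.

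The crux, and the step I expect to require the most care, is verifying $\Des(T)=\Des(C)$ under the stated convention that descent compositions of product shapes are read from the column word of $\mu$ followed by that of $\nu$. When $i$ and $i+1$ lie in the same block the descent status is inherited directly from $S$ or $U$ and matches the descents of $A$ or $B$; the delicate case is when $i$ and $i+1$ straddle the two blocks, where one must check that the column reading convention (all of $\mu$ read before all of $\nu$) reproduces precisely the descent contributed by the crossing letter of $C$, compatibly with the chosen ordering of the two disjoint alphabets. Once this descent-preservation is confirmed, summing $F_{\Des(T)}$ over all $T\in\SYT(\mu\otimes\nu)$ yields the claimed identity, so the proposition is indeed an immediate consequence of Theorem~\ref{thm:shuffle} together with Definition~\ref{def:schur-F}.
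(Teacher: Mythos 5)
Your proposal is correct and takes essentially the same route as the paper: the paper simply declares this proposition ``an immediate consequence of Theorem~\ref{thm:shuffle},'' with Figure~\ref{fig:shuffle} supplying exactly the descent-preserving correspondence between triples $(S,U,C)$ and $\SYT(\mu\otimes\nu)$ that you describe. Your write-up is in fact more detailed than the paper's, since you isolate and flag the one genuine verification (that $\Des(T)=\Des(C)$, including the case where $i$ and $i+1$ straddle the two blocks) that the paper leaves implicit.
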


%\begin{proof}
%  Given $(T,U)\in\SYT(\mu) \times \SYT(\nu)$ and $C\in A^{|\mu|} \shuffle B^{|\nu|}$, we may construct an element of $\SYT(\mu\otimes\nu)$ by placing the indices $\{i \mid C_i=A\}$ into $\mu$ according to the order specified by $T$, and placing the indices $\{i \mid C_i=B\}$ into $\nu$ according to the order specified by $U$. This process is clearly reversible, establishing a bijection
%  \begin{displaymath}
%    \SYT(\mu) \times \SYT(\nu) \times \left(A^{|\mu|} \shuffle B^{|\nu|}\right)
%    \stackrel{\sim}{\longrightarrow} \SYT(\mu\otimes\nu).
%  \end{displaymath}
%  Applying Theorem~\ref{thm:shuffle}, to the fundamental expansion of a product of Schur functions gives
%  \begin{displaymath}
%    s_{\mu} s_{\nu} = \sum_{(T,U)\in\SYT(\mu) \times \SYT(\nu)} F_{\Des(T)} F_{\Des(U)} = \sum_{\substack{(T,U)\in\SYT(\mu) \times \SYT(\nu) \\ C \in \mathrm{col}(T) \shuffle (|\mu|+\mathrm{col}(U))}} F_{\Des(C)},
%  \end{displaymath}
%  where $\mathrm{col}(T)$ is the column reading word of $T$ and $m+B$ is the word obtained by adding $m$ to all letters of $B$. Applying the bijection gives the result.
%\end{proof}
  
We may extend the dual equivalence operators $d_i$ to products of tableaux using the column reading word to obtain the classical Littlewood-Richardson rule for Schur functions.

\begin{theorem}[\cite{Ass15}]
  The maps $\{d_i\}$ give a dual equivalence for $(\SYT(\mu\otimes\nu),\Des)$. In particular, we have
  \begin{equation}
    s_{\mu} s_{\nu} = \sum_{\lambda} c_{\mu,\nu}^{\lambda} s_{\lambda},
    \label{e:LRR-schur}
  \end{equation}
  where $c_{\mu,\nu}^{\lambda}$ is the number of super-standard tableaux of shape $\mu\otimes\nu$ that rectify to $\lambda$.
  \label{thm:LRR-schur}
\end{theorem}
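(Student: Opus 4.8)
The plan is to reduce the claim to the dual equivalence on permutations already established in \cite{Ass15} (and recalled in the Remark following Definition~\ref{def:dual-stanley}), using the column reading word as an embedding. Set $n = |\mu| + |\nu|$ and let $\omega\colon\SYT(\mu\otimes\nu)\to S_n$ send a standard filling $T$ to its column reading word, where $S_n$ denotes the permutations of $\{1,\ldots,n\}$ written as words. By construction $\Des(T)$ is the descent composition of the word $\omega(T)$, and the operators $d_i$ are defined on $\SYT(\mu\otimes\nu)$ precisely so that $\omega(d_i(T)) = d_i(\omega(T))$, where on the right $d_i$ is Haiman's elementary dual equivalence involution on words. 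Since $\{d_i\}$ is a dual equivalence for $(S_n,\Des)$, it will suffice to show that $\omega$ is an injection onto a subset closed under every $d_i$; the two axioms of Definition~\ref{def:deg} then transfer.

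First I would observe that $\omega$ is injective, since the shape together with a reading word recovers the filling, and that it preserves $\Des$ and intertwines each $d_i$ by the very definition of $d_i$ on product shapes. The one substantive point, which I expect to be the main obstacle, is closure of the image: I must check that $d_i(T)$ is again a valid filling of $\mu\otimes\nu$, i.e.\ that each component remains a standard Young tableau. When the swapped values $k,k+1$ lie in the same component this is Haiman's single-shape invariance; the genuinely new case is a swap across the two components, which I would handle directly from the row- and column-increasing conditions, noting that in the component losing $k$ every relevant neighbor exceeding $k$ in fact exceeds $k+1$ (as $k+1$ is absent from that component), and symmetrically for the component losing $k+1$. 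With closure in hand, each restricted class $[T]_{(h,i)}$ maps bijectively onto the full restricted class $[\omega(T)]_{(h,i)}$ in $S_n$, preserving $\Des_{(h-1,i+1)}$, and the latter has generating function a single Schur function; commutation for $|i-j|\geq 3$ is inherited through the injection. Hence $\{d_i\}$ is a dual equivalence for $(\SYT(\mu\otimes\nu),\Des)$.

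For the Littlewood--Richardson expansion, I would invoke Theorem~\ref{thm:positivity} to get $\sum_{T\in[U]} F_{\Des(T)} = s_\lambda$ for each equivalence class, and then sum over classes using the identity $s_\mu s_\nu = \sum_{T\in\SYT(\mu\otimes\nu)} F_{\Des(T)}$ recorded above. This yields $s_\mu s_\nu = \sum_\lambda c_{\mu,\nu}^\lambda s_\lambda$, with $c_{\mu,\nu}^\lambda$ the number of classes whose generating function is $s_\lambda$. Since each class is $\Des$-isomorphic to $\SYT(\lambda)$, Proposition~\ref{prop:super-SYT} provides a unique tableau in it whose descent composition is the partition $\lambda$; this is exactly the super-standard tableau that rectifies to $\lambda$. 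Counting these identifies $c_{\mu,\nu}^\lambda$ with the number of super-standard tableaux of shape $\mu\otimes\nu$ that rectify to $\lambda$, completing the argument.
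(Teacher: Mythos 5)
The paper does not actually prove this theorem---it is imported from \cite{Ass15}, with the surrounding text only indicating that the $d_i$ are extended to product shapes via the column reading word---and your argument is a correct reconstruction along exactly those lines: embed $\SYT(\mu\otimes\nu)$ into words by the column reading word, verify the image is closed under Haiman's involutions (the cross-component swap being the only genuinely new case, which you handle correctly), then transfer the two axioms and conclude via Theorem~\ref{thm:positivity} and Proposition~\ref{prop:super-SYT}. One point worth making explicit in the same-component case: because the $\mu$-entries and $\nu$-entries occupy contiguous blocks of the reading word, the witness $i\mp 1$ triggering a swap of $i$ with $i\pm 1$ inside one component must itself lie in that component, so the move induced by the full word really is Haiman's move on that component alone.
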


\begin{remark}
  Dual equivalence can rediscover the \emph{jeu de taquin} algorithm of Sch{\"u}tzenberger \cite{Sch77} that gives an explicit rectification process. Indeed, Haiman originally called his involutions dual equivalence since they are precisely dual to \emph{jeu de taquin}. That is, dual equivalence moves commute with \emph{jeu de taquin} moves and as such can be used to give a simple proof that \emph{jeu de taquin} is well-defined and provides the explicit map from skew shapes to straight shapes needed to prove Theorem~\ref{thm:LRR-schur}.
  \label{rmk:jdt}
\end{remark}

Adjoint to products of shapes, whenever $\mu \subseteq \lambda$ we may define the \emph{skew shape} $\lambda/\mu$ to be the set-theoretic difference between the two shapes. Define the corresponding \emph{skew Schur function} by
\begin{equation}
  s_{\lambda/\mu} = \sum_{T \in \SYT(\lambda/\mu)} F_{\Des(\lambda)}.
\end{equation}

Once again, the dual equivalence operators on tableaux apply, giving the following.

\begin{theorem}[\cite{Ass15}]
  The maps $\{d_i\}$ give a dual equivalence for $(\SYT(\lambda/\mu),\Des)$. In particular,
  \begin{equation}
    s_{\lambda/\mu} = \sum_{\nu} c_{\mu,\nu}^{\lambda} s_{\nu},
    \label{e:skew-schur}
  \end{equation}
  where $c_{\mu,\nu}^{\lambda}$ is the number of super-standard tableaux of shape $\lambda/\mu$ that rectify to $\nu$.
  \label{thm:skew-schur}
\end{theorem}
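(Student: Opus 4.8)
The plan is to establish that the involutions $\{d_i\}$, now acting on $\SYT(\lambda/\mu)$ through the column reading word, constitute a dual equivalence for $(\SYT(\lambda/\mu),\Des)$ in the sense of Definition~\ref{def:deg}, and then to read off the expansion \eqref{e:skew-schur} from Theorem~\ref{thm:positivity}. The guiding observation, exactly as in the product case of Theorem~\ref{thm:LRR-schur}, is that the rule defining $d_i$ is purely \emph{local}: it inspects only the relative order of the three values $i-1,i,i+1$ in the column reading word, and it alters at most one of these three entries. Neither the reading word nor Haiman's swap-versus-fix rule references whether the ambient diagram is straight, skew, or a disjoint product, so the machinery transports verbatim.

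First I would confirm that each $d_i$ is a well-defined involution on $\SYT(\lambda/\mu)$. Since the column reading word of a skew tableau is defined just as for a straight tableau, Haiman's rule applies without change; that the prescribed swap preserves the skew-tableau conditions and squares to the identity is a local statement, already contained in \cite{Hai92}, and is insensitive to the cells removed to form $\mu$. Axiom (ii) is then immediate: when $|i-j|\geq 3$ the value-windows $\{i-1,i,i+1\}$ and $\{j-1,j,j+1\}$ are disjoint, so $d_i$ and $d_j$ move disjoint cells and commute.

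The substance, and the step I expect to require the most care, is axiom (i). For a restricted class generated by $d_h,\ldots,d_i$ with $i-h\leq 3$, only a window of at most six consecutive values is mobile, and the class together with the relevant restricted descent data is determined entirely by the relative positions of these values in the reading word. Because $d_i$ reacts only to this local reading-word data, the finite catalogue of configurations to be checked coincides with the one already verified for straight shapes (and rechecked by computer), so each restricted class has generating function a single Schur function; the potential worry that the gaps created by deleting $\mu$ produce genuinely new local configurations is dispelled precisely by this reading-word locality. With both axioms in hand, Theorem~\ref{thm:positivity} shows that each $d$-equivalence class of $\SYT(\lambda/\mu)$ contributes a single $s_\nu$, and Proposition~\ref{prop:super-SYT} pins down $\nu$ as the shape of the unique super-standard tableau in that class. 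Summing over classes gives $s_{\lambda/\mu}=\sum_\nu c^\lambda_{\mu,\nu}\, s_\nu$ with $c^\lambda_{\mu,\nu}$ the number of super-standard tableaux of shape $\lambda/\mu$ rectifying to $\nu$; that these agree with the coefficients of Theorem~\ref{thm:LRR-schur} follows from the classical adjointness of $s_\mu s_\nu$ and $s_{\lambda/\mu}$, which in this framework is witnessed by the jeu de taquin compatibility recorded in Remark~\ref{rmk:jdt}.
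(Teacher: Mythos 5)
First, a point of reference: the paper does not actually prove this theorem --- it is quoted from \cite{Ass15}, as the attribution indicates, and no argument appears in the text. Your proposal is therefore a reconstruction of the cited proof rather than a parallel to anything written here. That said, your outline is exactly the template the paper uses for its own analogous results (Theorem~\ref{thm:deg-red}, Theorem~\ref{thm:deg-key}, Lemma~\ref{lem:red-weak}, Theorem~\ref{thm:skew-key-limit}): well-definedness of the involutions (correctly credited to \cite{Hai92}, where the $d_i$ are defined on skew tableaux in the first place), commutation for $|i-j|\geq 3$, a finite check of small restricted classes for axiom (i), and then Theorem~\ref{thm:positivity} together with Proposition~\ref{prop:super-SYT} to write each class as a single $s_\nu$ counted by its unique super-standard representative. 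Those parts of your argument are correct, including the closing observation that identifying these coefficients with those of Theorem~\ref{thm:LRR-schur} is a separate adjointness fact, witnessed by the \emph{jeu de taquin} compatibility of Remark~\ref{rmk:jdt}, and is not needed for the statement itself.

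The soft spot is your justification of axiom (i). You claim the catalogue of local configurations for skew shapes ``coincides with the one already verified for straight shapes'' and attribute this to reading-word locality, but locality alone does not deliver that claim. Locality --- which, note, itself relies on a geometric fact special to skew shapes, namely that $i+1$ lies strictly above $i$ if and only if it lies weakly left of $i$, so that descents remain readable from the column word; this fails for arbitrary diagrams and must be checked for skew shapes --- only says that the restricted class and its restricted descent data depend on nothing but the relative reading order of the window values. It does not say that every pattern realized by a skew shape is already realized by a straight shape, which is what you need in order to invoke the straight-shape verification. The missing observation is an embedding: in any straight standard Young tableau, the cells holding a window of consecutive values $h-1,\dots,i+1$ themselves form a skew shape, and conversely every standard skew tableau on at most six cells arises as such a window --- fill the inner shape $\tau$ of its skew shape $\sigma/\tau$ with $1,\dots,|\tau|$ in any standard way and place the window values on top, producing a straight tableau of shape $\sigma$ whose window occupies the same cells in the same reading order, hence has the same class structure and restricted descent data. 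With that sentence added, the straight-shape check of \cite{Ass15} is literally a check over all small skew configurations, and your proof is complete.
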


\begin{figure}[ht]
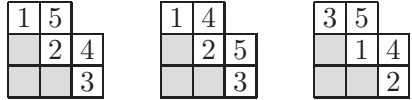

  \begin{displaymath}
    \begin{array}{c@{\hskip 2em}c@{\hskip 2em}c@{\hskip 2em}c@{\hskip 2em}c}
      \tableau{1 & 5 \\ \graybox & 2 & 4 \\ \graybox & \graybox & 3 } &
      \tableau{1 & 4 \\ \graybox & 2 & 5 \\ \graybox & \graybox & 3 } &
      \tableau{3 & 5 \\ \graybox & 1 & 4 \\ \graybox & \graybox & 2 } 
    \end{array}
  \end{displaymath}
  \caption{\label{fig:SYT-skew}The super-standard skew tableaux of shape $(3,3,2)/(2,1)$.}
\end{figure}

For example, from Figure~\ref{fig:SYT-skew}, we compute
\begin{displaymath}
  s_{(3,3,2)/(2,1)}(X) = s_{(3,1,1)}(X) + s_{(3,2)}(X) + s_{(2,2,1)}(X).
\end{displaymath}

Since skewing is adjoint to multiplication, the repetition of notation in Theorems~\ref{thm:LRR-schur} and \ref{thm:skew-schur} is intentional. That is, $c_{\mu,\nu}^{\lambda}$ is well-defined by either and agrees for both.

%%%%%%%%%%%%%%%%%%%%%%%%%%%%%%%%%%%%%%%%%%%%%%%%%%%%%%%%%%%%%%%%
\subsection{Skew key polynomials}
%%%%%%%%%%%%%%%%%%%%%%%%%%%%%%%%%%%%%%%%%%%%%%%%%%%%%%%%%%%%%%%%
\label{sec:LRR-skew}

Given weak compositions $a,d$, we say that $a \subseteq d$ if $a_i \leq d_i$ for all $i$. Equivalently, the key diagram for $a$ is a subset of the key diagram for $d$. This allows us to form the \emph{skew key diagram} $d/a$ as the set-theoretic difference between the two key diagrams. We extend the notion of standard key tableaux to skew shapes, but now we allow a new type of column inversion.

\begin{definition}
  For weak compositions $a \subseteq d$, a \emph{standard skew key tableau of shape $d/a$} is a bijective filling of the skew key diagram $d/a$ with entries $\{1,2,\ldots,n\}$ such that rows weakly decrease and if some entry $i$ is above and in the same column as an entry $k$ with $i<k$, then either there is an entry immediately right of $k$, say $j$, and $i<j$ or there is a skewed cell immediately left of $k$ and an entry $j<k$ immediately left of $i$. 
  \label{def:skew-SKT}
\end{definition}

Denote the set of skew standard key tableau of shape $d/a$ by $\Key(d/a)$. As with the case of products, we use Definition~\ref{def:des-std} to define weak descent compositions for standard skew key tableaux. For example, see Figure~\ref{fig:key-skew-2}. 

%\begin{figure}[ht]
%  \begin{displaymath}
%    \begin{array}{c@{\hskip 2em}c@{\hskip 2em}c@{\hskip 2em}c@{\hskip 2em}c@{\hskip 2em}c}
%      \tableau{ 5 & 4 & 3 \\ \graybox & 1 \\ \graybox & \graybox & 2} &
%      \tableau{ 5 & 3 & 2 \\ \graybox & 1 \\ \graybox & \graybox & 4} &
%      \tableau{ 4 & 3 & 2 \\ \graybox & 1 \\ \graybox & \graybox & 5} &
%      \tableau{ 5 & 3 & 1 \\ \graybox & 2 \\ \graybox & \graybox & 4} &
%      \tableau{ 4 & 3 & 1 \\ \graybox & 2 \\ \graybox & \graybox & 5} &
%      \tableau{ 3 & 2 & 1 \\ \graybox & 4 \\ \graybox & \graybox & 5} \\ \\
%      \tableau{ 5 & 4 & 3 \\ \graybox & 2 \\ \graybox & \graybox & 1} &
%      \tableau{ 5 & 4 & 2 \\ \graybox & 3 \\ \graybox & \graybox & 1} &
%      \tableau{ 4 & 3 & 2 \\ \graybox & 5 \\ \graybox & \graybox & 1} &
%      \tableau{ 4 & 2 & 1 \\ \graybox & 5 \\ \graybox & \graybox & 3} &
%      \tableau{ 3 & 2 & 1 \\ \graybox & 5 \\ \graybox & \graybox & 4} &
%    \end{array}
%  \end{displaymath}
%  \caption{\label{fig:key-skew-2}The skew key tableaux of shape $(3,2,3)/(2,1,0)$.}
%\end{figure}

\begin{figure}[ht]
  \begin{displaymath}
    \begin{array}{ccccc@{\hskip 4\cellsize}cccc}
      \vline\tableau{3 & 1 \\ 2 \\ \graybox & 4 \\ } &
      \raisebox{-0.5\cellsize}{$\stackrel{d_2}{\longleftrightarrow}$}  &
      \vline\tableau{3 & 2 \\ 1 \\ \graybox & 4 \\ } &
      \raisebox{-0.5\cellsize}{$\stackrel{d_3}{\longleftrightarrow}$}  &
      \vline\tableau{4 & 3 \\ 1 \\ \graybox & 2 \\ } & &
      \vline\tableau{4 & 3 \\ 2 \\ \graybox & 1 \\ } &
      \raisebox{-\cellsize}{$\stackrel{\displaystyle\stackrel{d_2}{\Longleftrightarrow}}{\scriptstyle d_3}$}  &
      \vline\tableau{4 & 2 \\ 3 \\ \graybox & 1 \\ } \\ \\
      \varnothing & &
      \varnothing & &
      (1,1,0,2) & &
      (0,2,0,2) & &
      (0,1,2,1) 
    \end{array}
  \end{displaymath}
  \caption{\label{fig:key-skew-2}The skew key tableaux of shape $(0,2,1,2)/(0,1,0,0)$.}
\end{figure}

\begin{definition}
  For weak compositions $a \subseteq d$, the \emph{skew key polynomial of shape $d/a$} is
  \begin{equation}
    \key_{d/a} = \sum_{T \in \Key(d/a)} \fund_{\des(T)}.
  \end{equation}
  \label{def:skew-key}
\end{definition}

Observe that $\key_{d/a}$ is not necessarily nonnegative in the key basis. For example,
\begin{displaymath}
  \key_{(0,2,1,2)/(0,1,0,0)} = \key_{(0, 2, 0, 2)}+\key_{(1, 1, 0, 2)}+\key_{(0, 1, 2, 1)}-\key_{(0, 2, 1, 1)}-\key_{(1, 2, 0, 1)}-\key_{(1, 1, 2, 0)}+\key_{(1, 2, 1, 0)}.
\end{displaymath}
Furthermore, it is not the case that the skew key polynomial for $d/a$ stabilizes to a skew Schur function for $\sort(d)/\sort(a)$; for example $\key_{(3,2,3)/(0,1,2)}$ stabilizes to $s_{(3,3,2)/(2,1)} - s_{(2,2,1)}$. We do, however, get an \emph{injective} map from $\Key(d/a)$ to $\SYT(\sort(d)/\sort(a))$. To prove this, we extend the dual equivalence operators $d_i$ from Definition~\ref{def:key-deg} to skew key tableaux.

\begin{theorem}
  The maps $\{d_i\}$ give a dual equivalence for $(\Key(d/a),\Des)$. In particular,
  \begin{equation}
    \lim_{m\rightarrow\infty} \key_{(0^m\times d)/(0^m\times a)} = \sum_{\nu} c_{a,\nu}^{d} s_{\nu},
    \label{e:skew-key-limit}
  \end{equation}
  where $c_{\mu,\nu}^{\lambda}$ is the number of super-standard skew tableaux of shape $d/a$ that rectify to $\nu$.
  \label{thm:skew-key-limit}
\end{theorem}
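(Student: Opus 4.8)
The plan is to verify that $\{d_i\}$ satisfies the two axioms of Definition~\ref{def:deg} for the pair $(\Key(d/a),\Des)$, and then to read the stable limit off the resulting class decomposition. Before anything else I would extend Lemma~\ref{lem:dual-SKT} to the skew setting, checking that each $d_i$ of Definition~\ref{def:key-deg} is a well-defined involution on $\Key(d/a)$; the analysis is the same as in the straight case, except that the second clause of Definition~\ref{def:skew-SKT} must now be tracked, so I would confirm that the braid and swap moves preserve not only the ordinary column-inversion condition but also the new ``skewed cell on the left'' inversion. Axiom (ii), commutativity of $d_i$ and $d_j$ for $|i-j|\geq 3$, is immediate from locality exactly as in the proof of Theorem~\ref{thm:deg-key}: $d_i$ alters only the cells carrying $i-1,i,i+1$, and its choice of move depends only on their relative positions, so disjoint windows act independently.

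\textbf{The local axiom via transfer.} The substance is axiom (i), and here I would transfer it from the skew Schur case rather than re-examine every local configuration. Following the proof of Theorem~\ref{thm:dual-SKT}, I would define $\Phi\colon\Key(d/a)\to\SYT(\sort(d)/\sort(a))$ by letting the cells of $T$ fall so that the outer shape becomes $\sort(d)$ and the inner shape $\sort(a)$, sorting each column to descend upward, and replacing each entry $i$ by $n-i+1$. The three things to establish are that $\Phi(T)$ is a genuine standard Young tableau of the stated skew shape, that $\Phi$ is injective, and that it is $\Des$-complementary ($i\in\Des(T)$ iff $n-i\in\Des(\Phi(T))$) and intertwines the two families, $\Phi(d_i(T))=d_{n-i+1}(\Phi(T))$. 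Granting the intertwining, every skew-Young dual equivalence move pulls back along $\Phi$ to a move on $\Key(d/a)$, so for $i-h\leq 3$ the map $\Phi$ restricts to a bijection between the restricted class $[T]_{(h,i)}$ and the restricted class $[\Phi(T)]_{(n-i+1,\,n-h+1)}$ in $\SYT(\sort(d)/\sort(a))$; the latter generates a Schur function by Theorem~\ref{thm:skew-schur}, and complementing descents carries Schur functions to Schur functions by the symmetry of Schur functions used at the end of the proof of Theorem~\ref{thm:dual-SKT}. This yields axiom (i), hence the dual equivalence. As a byproduct, the image of $\Phi$ is a union of full dual equivalence classes, which is precisely the asserted injection of $\Key(d/a)$ into $\SYT(\sort(d)/\sort(a))$.

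\textbf{The stable limit.} For the limiting identity I would use the weight-respecting bijection $\Key((0^m\times d)/(0^m\times a))\cong\Key(d/a)$ obtained by inserting $m$ empty rows at the bottom, under which the weak descent composition of Definition~\ref{def:des-std} simply acquires $m$ leading zeros once $m$ is large. By Proposition~\ref{prop:fund-limit} and the identity $\Des(T)=\flatten(\des(T))$ recorded after Definition~\ref{def:key-Des}, the $\fund_{\des}$-terms pass in the limit to $F_{\Des}$-terms, giving $\lim_m \key_{(0^m\times d)/(0^m\times a)}=\sum_{T\in\Key(d/a)}F_{\Des(T)}$. Splitting this sum over the dual equivalence classes and applying Theorem~\ref{thm:positivity} expresses it as a nonnegative sum of Schur functions; the super-standard characterization of Proposition~\ref{prop:super-SYT}, transported through $\Phi$, identifies the partition indexing each class as the descent composition of its unique super-standard skew key tableau, yielding $\sum_\nu c^{d}_{a,\nu}s_\nu$ with $c^{d}_{a,\nu}$ counting super-standard skew key tableaux that rectify to $\nu$.

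\textbf{Expected obstacle.} I expect the genuinely delicate point to be the interaction of the new inversion clause of Definition~\ref{def:skew-SKT} with $\Phi$. In the straight case the key-tableau condition rigidly controls the column reading word (so that, for instance, $i+1$ must sit above $i$ when they share a column), and this control is exactly what makes both the intertwining and the injectivity of $\Phi$ work; the skewed-cell inversion permits new relative configurations of $i-1,i,i+1$, so I would need to re-verify that the column reading order of these three entries still maps correctly under falling-and-sorting, and that no two distinct skew key tableaux fall to the same standard Young tableau. Should that direct verification prove stubborn, the fallback is the one used in Theorems~\ref{thm:deg-red} and \ref{thm:deg-key}: since the moves and the inversion conditions are local, reduce axiom (i) to a finite check over all relative configurations of the window $\{h-1,\dots,i+1\}$ and confirm it by hand or by computer.
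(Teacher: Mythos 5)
Your proposal follows essentially the same route as the paper's proof: the paper likewise first checks that the $d_i$ are well defined on skew shapes (observing that the swap cases are automatic and that the problematic braid configuration with the skewed-cell inversion is a ``rotation'' of the straight braid case), then transfers axiom (i) through exactly your fall-sort-complement map $\Phi$ into the skew tableaux of shape $\sort(d)/\sort(a)$, using the convention that skew cells are extremal in the ordering so that $\Phi(d_i(T))=d_{n-i+1}(\Phi(T))$. If anything, your plan is more careful than the paper on the points it elides: the paper asserts $\Phi$ is a bijection onto the sorted skew shape (in general it is only an injection, as the example $\key_{(3,2,3)/(0,1,2)}$ stabilizing to $s_{(3,3,2)/(2,1)}-s_{(2,2,1)}$ shows), and it compresses your entire stable-limit and super-standard identification step into ``the result follows.''
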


\begin{proof}
  It does not follow from Lemma~\ref{lem:dual-SKT} that $d_i$ is well-defined on skew diagrams. Certainly if $d_i$ acts by swapping $i$ and $i\pm 1$, the two entries are not in the same row or column, so the map is well-defined in this case. However, for $\braid_i$, we must be more careful since now we can have two of $i,i\pm 1$ in the same row with $i\mp 1$ in a lower row if it lies in the right column with a skewed cell to its left (for example, see the second and third tableaux from the left in Figure~\ref{fig:key-skew-2}). This case is effectively a rotation of the braid case for straight diagrams, so this action is also well-defined.

  Consider the bijection $\Phi:\Key(a)\stackrel{\sim}{\rightarrow}\Key(\sort(a))$ from the proof of Theorem~\ref{thm:dual-SKT} defined by letting the cells of $T$ fall to shape $\sort(a)$, then sorting the columns to descend upward, and replacing $i$ with $n-i+1$. If we have skew cells regarded as smaller than any others, then this maps extends to the skew case and still commutes with the maps $d_i$ in the sense that $\Phi(d_i(T)) = d_{n-i+1}(\Phi(T))$, hence it is a bijection $\Phi:\Key(d/a)\stackrel{\sim}{\rightarrow}\Key(\sort(d)/\sort(a))$ and the result follows.
\end{proof}

As with the case of products, these maps do not, in general, give a weak dual equivalence as evidenced by Figure~\ref{fig:key-skew-2}. However, as with products, they do in the case when $a$ is nondecreasing.

\begin{theorem}
    For $\lambda$ a partition of length $n$, let $a_{\lambda}$ be the weakly increasing weak composition of length $n$ that sorts to $\lambda$. For $d$ any weak composition of length $n$ with $a \subseteq d$, the maps $\{d_i\}$ give a weak dual equivalence for $(\Key(d/a_{\lambda}),\des)$. In particular,
  \begin{equation}
    \key_{d/a_{\lambda}} = \sum_{b} \hat{c}_{a_{\lambda},b}^{d} \key_{b},
    \label{e:skew-key}
  \end{equation}
  where $\hat{c}_{a_{\lambda},b}^{d}$ is the number yamanouchi skew key tableaux of shape $d/a_{\lambda}$ that rectify to $b$.
  \label{thm:skew-key}
\end{theorem}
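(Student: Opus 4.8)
The plan is to verify that the maps $\{d_i\}$ constitute a \emph{stable} weak dual equivalence for $(\Key(d/a_\lambda),\des)$ and then invoke Theorem~\ref{thm:positivity-key}, exactly as in the straight-shape case (Theorem~\ref{thm:deg-key}). First I would confirm well-definedness: by the analysis in the proof of Theorem~\ref{thm:skew-key-limit}, each $d_i$ is already a well-defined involution on the larger set $\Key(d/a)$ for arbitrary $a$, so in particular on $\Key(d/a_\lambda)$. The commutation $\psi_j\psi_i = \psi_i\psi_j$ for $|i-j|\geq 3$ is immediate, since $d_i$ only alters the relative positions of $i-1,i,i+1$, whose index windows are disjoint from those of $j-1,j,j+1$. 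Thus the entire content reduces, as in Definition~\ref{def:deg-weak}(i), to checking that each restricted equivalence class under $d_h,\ldots,d_i$ with $i-h\leq 3$ has fundamental-slide generating polynomial equal to a single ($\fund$-stable) key polynomial.

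The crucial point—and the place where the hypothesis that $a_\lambda$ is weakly increasing is used—is that this local check succeeds only when the skewed cells do not introduce the extra ``rotated braid'' column-inversions that appear in the general skew case (the second and third tableaux of Figure~\ref{fig:key-skew-2} exhibit exactly the pathology that must be excluded). When $a_\lambda$ is weakly increasing, the removed cells in each row are left-justified and the row lengths of the skew shape $d/a_\lambda$ in a window of three consecutive rows cannot produce a configuration in which $i\mp 1$ sits strictly below two cells of the same row occupying the rightmost column with a skew cell to the left. I would make this precise by showing that under the weakly-increasing hypothesis the $d_i$-action on a restricted class reproduces verbatim the three cases computed in the proof of Theorem~\ref{thm:deg-key}: the braid case yielding $\fund_{(0^m,2,0^n,1)}+\fund_{(0^{m-1},1,2)}=\key_{(0^m,2,0^n,1)}$, and the two swap cases yielding $\key_{(0^m,1,2)}$ and its analogue. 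Each of these is a single $\fund$-stable key polynomial, so condition (i) of Definition~\ref{def:deg-weak} holds and stability (Definition~\ref{def:deg-weak-stable}) is met.

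Having established that $\{d_i\}$ is a stable weak dual equivalence, I would apply Theorem~\ref{thm:positivity-key}: each equivalence class $[U]$ contributes $\sum_{T\in[U]}\fund_{\des(T)}=\key_b$ for a unique $\fund$-stable $b$, and $b$ is detected by the yamanouchi representative of the class via Proposition~\ref{prop:super-Key}. Summing over classes gives the asserted expansion $\key_{d/a_\lambda}=\sum_b \hat c_{a_\lambda,b}^d\,\key_b$ with $\hat c_{a_\lambda,b}^d$ counting yamanouchi skew key tableaux of shape $d/a_\lambda$ rectifying to $b$.

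The main obstacle I anticipate is precisely the case analysis ruling out the anomalous braid move on skew shapes: one must argue carefully that the weakly-increasing condition on $a_\lambda$ forces the skew cells in any three-row window to sit in positions that cannot trigger the ``skewed cell immediately left of $k$'' branch of Definition~\ref{def:skew-SKT}, so that the local generating polynomials coincide with the straight-shape computations of Theorem~\ref{thm:deg-key} rather than failing to assemble into a single key polynomial. As in the earlier proofs, once the finitely many window configurations are identified, they may be verified either by hand or, to avoid tedium and error, by exhausting them on a computer.
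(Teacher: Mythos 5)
Your preliminary steps are fine (well-definedness of the $d_i$ on skew shapes via Theorem~\ref{thm:skew-key-limit}, commutation for $|i-j|\geq 3$), and your observation that the weakly increasing hypothesis kills the second branch of Definition~\ref{def:skew-SKT} --- so no ``rotated braid'' configurations occur in $\Key(d/a_{\lambda})$ --- is correct. But you have misidentified the crux, and this leaves a genuine gap. Excluding the rotated braid does not imply that the local generating polynomials coincide with the straight-shape computations of Theorem~\ref{thm:deg-key}. The real obstruction to weak dual equivalence on skew shapes is the weak descent composition itself: by Definition~\ref{def:des-std} (whose full generality, as the paper remarks, is needed precisely in this section), $t_i$ is the minimum of $t_{i+1}-1$ and the rows of \emph{all} entries of the run block $\tau^{(i)}$, not just of its largest entry. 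An entry deep inside a run block lying in a strictly lower row can drag the block's index down and destroy key positivity of a class. This pathology is independent of rotated braids: in Figure~\ref{fig:bad-key-product} both factors are straight shapes, there are no skewed cells at all, the $d_i$ act exactly as on straight shapes, and yet the class fails to generate a key polynomial, precisely because the $3$ in a low row forces its run block's index down. Your proposal never explains why this cannot happen in $\Key(d/a_{\lambda})$, and nothing in the braid analysis addresses it.

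Closing the gap requires the argument that is in fact the entire content of the paper's proof: if $i$ lies strictly below and strictly right of $i+1$ (same run block, $i$ lower), then because the skewed cells are left-justified and weakly increase upward, the cell in the row of $i$ and the column of $i+1$ exists in $d/a_{\lambda}$, is not skewed, and carries an entry $k>i+1$ belonging to a different run block; the chain of constraints $t_{j}\leq t_{j+1}-1$ descending from the block of $k$ already forces the index of the block containing $i+1$ and $i$ strictly below $\row(i)$, so the minimum over all entries of the block agrees with the value computed from the largest entry alone. Hence $\des$ on $\Key(d/a_{\lambda})$ is computed exactly as on straight shapes, and the theorem reduces to Theorem~\ref{thm:deg-key}. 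Separately, your plan to finish by invoking Theorem~\ref{thm:positivity-key} is not directly licensed: that theorem assumes $\mathcal{A}$ has no virtual elements, while $\Key(d/a_{\lambda})$ may contain virtual tableaux (compare Figure~\ref{fig:key-tableau}); the paper avoids this by reducing to Theorem~\ref{thm:deg-key}, where virtual elements contribute $\fund_{\varnothing}=0$ and each class is $\des$-isomorphic to some $\Key(b)$, and otherwise you would need a stabilization argument as in Theorem~\ref{thm:sch-key}.
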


\begin{proof}
  When $a_{\lambda}$ is a partition shape that sits at the top row of $d$, then if $i$ lies strictly below and strictly right of $i+1$, then there must be a cell (not skewed) containing an entry, say $k$, below $i+1$ in the same column and left of $i$ in the same row, and necessarily $k>i+1$ so $k$ lies in a different block of the run decomposition than $i+1$ and $i$. Therefore in constructing the weak descent composition, the run block containing $k$ will be indexed by some row weakly below that of $i$, and so the run block for $i+1$ will be indexed by some row strictly below $i$. Thus we may equivalently define the weak descent compositions by taking the largest entry, instead of the lowest entry, and now the definition coincides with that for a non-skewed key tableaux. Therefore the result follows from Theorem~\ref{thm:deg-key}.
\end{proof}

\begin{figure}[ht]
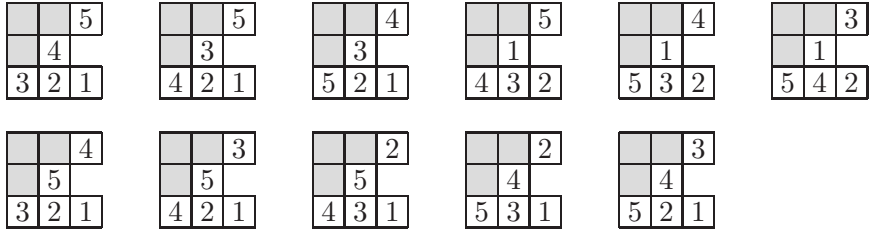

  \begin{displaymath}
    \begin{array}{c@{\hskip 2em}c@{\hskip 2em}c@{\hskip 2em}c@{\hskip 2em}c@{\hskip 2em}c}
      \tableau{ \graybox & \graybox & 5 \\ \graybox & 4 \\ 3 & 2 & 1} &
      \tableau{ \graybox & \graybox & 5 \\ \graybox & 3 \\ 4 & 2 & 1} &
      \tableau{ \graybox & \graybox & 4 \\ \graybox & 3 \\ 5 & 2 & 1} &
      \tableau{ \graybox & \graybox & 5 \\ \graybox & 1 \\ 4 & 3 & 2} &
      \tableau{ \graybox & \graybox & 4 \\ \graybox & 1 \\ 5 & 3 & 2} &
      \tableau{ \graybox & \graybox & 3 \\ \graybox & 1 \\ 5 & 4 & 2} \\ \\
      \tableau{ \graybox & \graybox & 4 \\ \graybox & 5 \\ 3 & 2 & 1} &
      \tableau{ \graybox & \graybox & 3 \\ \graybox & 5 \\ 4 & 2 & 1} &
      \tableau{ \graybox & \graybox & 2 \\ \graybox & 5 \\ 4 & 3 & 1} &
      \tableau{ \graybox & \graybox & 2 \\ \graybox & 4 \\ 5 & 3 & 1} &
      \tableau{ \graybox & \graybox & 3 \\ \graybox & 4 \\ 5 & 2 & 1} &
    \end{array}
  \end{displaymath}
  \caption{\label{fig:key-skew}The skew key tableaux of shape $(3,2,3)/(0,1,2)$.}
\end{figure}

For example, each row of standard skew key tableaux in Figure~\ref{fig:key-skew} is a dual equivalence class, and from the weak descent compositions we compute
\begin{displaymath}
  \key_{(3,2,3)/(0,1,2)} = \key_{(3,1,1)} + \key_{(3,2,0)}.
\end{displaymath}
In contrast, for the corresponding skew Schur function we have
\begin{displaymath}
  s_{(3,3,2)/(2,1)} = s_{(3,1,1)} + s_{(3,2)} + s_{(2,2,1)}.
\end{displaymath}
Note that for $b$ any weak composition with $\sort(b)=(2,2,1)$ and any $m\geq 0$, the term $\key_{0^m\times(3,2,3)}$ does not appear in the key expansion of the product $\key_b \key_{0^{m}\times (0,1,2)}$. 

\begin{remark}
  We can describe the rectification rule more directly by first applying the injection $\Key(d/a) \rightarrow \SYT(\sort(d)/\sort(a))$, then rectifying according to \emph{jeu de taquin}, then applying the bijection $\SYT(\mu) \stackrel{\sim}{\rightarrow} \Key(c)$. In so doing, one sees that the rectification map can be described more directly by a process similar to \emph{jeu de taquin}, though now entries may cycle when a cell slides left, so the explicit algorithm becomes more involved.
  \label{rmk:jdt-key}
\end{remark}

%%%%%%%%%%%%%%%%%%%%%%%%%%%%%%%%%%%%%%%%%%%%%%%%%%%%%%%%%%%%%%%%
\subsection{Products of key polynomials}
%%%%%%%%%%%%%%%%%%%%%%%%%%%%%%%%%%%%%%%%%%%%%%%%%%%%%%%%%%%%%%%%
\label{sec:LRR-key}

Assaf and Searles generalized the shuffle product to weak compositions to give a Littlewood--Richardson rule for fundamental slide polynomials \cite{AS17}. 

%Given a word $C$ with run decomposition $(C^{(k)} | \cdots | C^{(1)})$, set $c_k = C^{(k)}_1$ and, for $i<k$, set $c_i = \min(C^{(i)}_1,c_{i+1}-1)$. Define the \emph{weak descent composition of $C$}, denoted by $\des(C)$, by $\des(C)_{c_i} = |C^{(i)}|$ and all other parts are zero if all $c_i>0$ and $\des(C) = \varnothing$ otherwise.

\begin{definition}
  The \emph{slide product} of weak compositions $a$ and $b$ of length $n$ is the formal series
  \begin{equation}
    a \shuffle b = \sum_{C \in A \shuffle B} \des(C),
  \end{equation}
  where $A = (2n-1)^{a_1} \cdots (3)^{a_{n-1}} (1)^{a_n}$ and $B = (2n)^{b_1} \cdots (4)^{b_{n-1}} (2)^{b_n}$, and for $C\in A \shuffle B$ with run decomposition $(C^{(k)} | \cdots | C^{(1)})$, we set $c_i = \min(\lceil C^{(i)}_1/2 \rceil,c_{i+1}-1)$ and define $\des(C)$ by $\des(C)_{c_i} = |C^{(i)}|$ and all other parts are zero if all $c_i>0$ and $\des(C) = \varnothing$ otherwise.
\label{def:slide}
\end{definition}

For example, to compute $(2,0,3) \shuffle (0,2,1)$, we may take weak descent compositions for the non-virtual shuffles of $55111 \shuffle 664$. For instance, $\des(56645111) = (3,2,3)$ whereas $\des(55616114) = \varnothing$ since $r_1=0$. Note that bumping the latter example, we have $\des(77838336) = (3,2,0,3)$.

\begin{theorem}[\cite{AS17}]
  For weak compositions $a,b$, we have
  \begin{equation}
    \fund_a \fund_b = \sum_{c} [c \mid a \shuffle b] \fund_c,
    \label{e:slide}
  \end{equation}
  where $[c \mid a \shuffle b]$ denotes the multiplicity of $c$ in the slide product $a \shuffle b$.
  \label{thm:slide}
\end{theorem}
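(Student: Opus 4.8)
The plan is to prove the equivalent identity
\[ \fund_a \fund_b = \sum_{C \in A \shuffle B} \fund_{\des(C)}, \]
since by Definition~\ref{def:slide} the coefficient $[c \mid a \shuffle b]$ is exactly the number of shuffles $C$ of $A$ and $B$ with $\des(C) = c$, and the virtual shuffles (those with $\des(C) = \varnothing$) contribute $\fund_{\varnothing} = 0$. I would establish this coefficientwise in the monomial basis: for each fixed weak composition $e$, I will exhibit a bijection between the pairs $(e',e'')$ with $e' + e'' = e$ that index a monomial $x^{e'}$ of $\fund_a$ and a monomial $x^{e''}$ of $\fund_b$, and the shuffles $C \in A \shuffle B$ for which $x^e$ is a monomial of $\fund_{\des(C)}$. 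Matching these two counts for every $e$ yields the theorem.

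The key tool is a word model for the monomials of a slide polynomial. Reading Definition~\ref{def:fundamental-shift} through the word $A = (2n-1)^{a_1}\cdots(1)^{a_n}$, a monomial $x^{e'}$ of $\fund_a$ corresponds to the weakly increasing assignment of variable indices $v_1 \le \cdots \le v_{|a|}$ to the letters of $A$, taken in order, that is strictly increasing wherever the letter value drops and whose content is $e'$. Here the requirement that $\flatten(e')$ refine $\flatten(a)$ is precisely the strict increase at each value drop (two distinct levels of $A$ may not share a variable, as that would coarsen rather than refine), while $e' \ge a$ is precisely the lower bound on the $v_i$ that keeps the associated weak descent composition non-virtual. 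Because a weakly increasing sequence is recovered from the multiset of its values, the monomial $e'$ determines this assignment uniquely, and likewise $e''$ determines the analogous assignment of the letters of $B = (2n)^{b_1}\cdots(2)^{b_n}$.

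Given such a pair, I superimpose the two assignments on a common variable axis and read off letters in order of increasing variable index, breaking ties by increasing letter value; since $A$ uses only odd letters and $B$ only even letters, no two letters ever coincide, so this yields a well-defined shuffle $C \in A \shuffle B$ of content $e = e' + e''$. Conversely, a shuffle $C$ together with a monomial $x^e$ of $\fund_{\des(C)}$ determines the compatible assignment to the letters of $C$, and restricting that assignment to the odd (respectively even) letters returns $e'$ (respectively $e''$), so the two constructions are mutually inverse. The heart of the argument, and the step I expect to be the main obstacle, is verifying that the maximal weakly increasing runs of $C$ are exactly the blocks of letters sharing a common variable level, so that the recursion $c_i = \min(\lceil C^{(i)}_1/2\rceil, c_{i+1}-1)$ of Definition~\ref{def:slide} reproduces the two separate dominance conditions $e' \ge a$ and $e'' \ge b$ as the single condition that $e \ge \des(C)$ with $\flatten(e)$ refining $\flatten(\des(C))$. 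This is exactly where the interleaving of the $\lceil\cdot/2\rceil$-rows of the odd and even alphabets must be shown to be consistent, and where a virtual shuffle is seen to be one whose forced lower bound drops below row $1$, matching $\fund_{\varnothing} = 0$.

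Finally, as a consistency check one can prepend $m$ zeros to both $a$ and $b$ and let $m \to \infty$: by Proposition~\ref{prop:fund-limit} the two slide polynomials stabilize to $F_{\flatten(a)}$ and $F_{\flatten(b)}$, the slide product stabilizes to the ordinary shuffle product of $\flatten(a)$ and $\flatten(b)$, and the identity collapses to Gessel's shuffle formula (Theorem~\ref{thm:shuffle}). This confirms that the bijection constructed above is the correct polynomial refinement of the quasisymmetric case.
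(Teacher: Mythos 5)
The paper never proves Theorem~\ref{thm:slide}; it imports the result from \cite{AS17}, so there is no internal proof to compare against and your proposal must stand on its own. Its architecture is the right one, and essentially the standard one: model monomials of a slide polynomial by compatible variable assignments on a word, then merge an assignment on $A$ with an assignment on $B$ into an assignment on a shuffle $C$, and recover the pair by restriction. But the step you flag as the heart of the argument is not merely the main obstacle --- it is false. The maximal weakly increasing runs of $C$ are \emph{not} the blocks of letters sharing a common variable level. Take $a=(0,2)$ and $b=(0,0)$: both cells of $a$ lie in row $2$, so the word $A$ consists of two equal letters and the unique shuffle $C=A$ is a single run; yet this one shuffle must account for all of $\fund_{\des(C)}=\fund_{(0,2)}=x_1^2+x_1x_2+x_2^2$, and the monomial $x_1x_2$ forces the two letters to carry \emph{distinct} variables, i.e.\ two levels inside one run. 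Only one containment holds: no descent can occur inside a level, so every run is a union of consecutive levels, but a level boundary need not be a descent. This asymmetry is exactly why a single shuffle contributes the entire polynomial $\fund_{\des(C)}$ (one monomial for each way of splitting its runs into levels) and why the recursion $c_i=\min(\lceil C^{(i)}_1/2\rceil,\,c_{i+1}-1)$ carries the $\min$. The lemma your bijection actually rests on --- needed both for ``the merge lands in $\fund_{\des(C)}$'' and for ``restriction returns monomials of $\fund_a$ and $\fund_b$'' --- is that the contents of assignments on $C$ which are weakly monotone along $C$, strict at descents of $C$, and bounded letterwise by the row $\lceil\mathrm{letter}/2\rceil$ are precisely the monomials of $\fund_{\des(C)}$, each from a unique assignment. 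Proving that equivalence (dominance plus refinement versus these local conditions) requires an induction along the runs of $C$ through the $\min$ recursion, and your write-up supplies no argument for it; once the false ``runs $=$ levels'' claim is removed, nothing is left at this point.

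Two further repairs are needed before this can be written up. First, the condition encoding dominance is an \emph{upper} bound: $e'\geq a$ pushes mass toward $x_1$, so the variable on each letter must be at most that letter's row; calling it ``the lower bound on the $v_i$ that keeps the associated weak descent composition non-virtual'' conflates the per-letter bounds with positivity of the variables (virtuality is what happens when those upper bounds force a variable below $1$). Second, Definition~\ref{def:slide} cannot be taken literally as printed: with $A=(2n-1)^{a_1}\cdots(1)^{a_n}$ and $B=(2n)^{b_1}\cdots(2)^{b_n}$ one computes $\fund_{(0,1)}\fund_{(0,1)}=\fund_{(2,0)}$, contradicting $(x_1+x_2)^2=\fund_{(0,2)}+\fund_{(1,1)}$, and these words are also inconsistent with the paper's own example $55111\shuffle 664$. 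The intended convention of \cite{AS17} attaches the letters $2i-1$ (odd) and $2i$ (even) to row $i$, so that $\lceil\cdot/2\rceil$ recovers the row bound; since your ``taken in order'' rule, your tie-breaking rule, and the direction of all monotonicity conditions silently depend on this convention, a correct proof must pin it down first. Finally, the stable-limit check against Gessel's formula via Proposition~\ref{prop:fund-limit} is a reasonable sanity check but proves nothing about the polynomial refinement, which is exactly where the content of the theorem lies.
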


For example, we compute the following slide product using only non-virtual terms
\begin{eqnarray*}
  \fund_{(2,0,3)} \fund_{(0,2,1)} & = & \fund_{(2,2,4)} + \fund_{(2,3,3)} + \fund_{(2,4,2)} + \fund_{(2,5,1)} + \fund_{(3,1,4)} + \fund_{(3,2,3)}\\
  &   & + \fund_{(3,3,2)} + \fund_{(3,4,1)} + \fund_{(4,0,4)} + \fund_{(4,1,3)} + \fund_{(4,2,2)} + \fund_{(4,3,1)}.
\end{eqnarray*}

Given weak compositions $a,b$, define the diagram $\YD(a\otimes b)$ to be the concatenation of key diagrams for $a$ and for $b$. A \emph{standard key tableau of shape $a\otimes b$} is a bijective filling of $\YD(a\otimes b)$ with entries $\{1,2,\ldots,n\}$ such that each diagram satisfies the key tableaux conditions. We now use the full power of Definition~\ref{def:des-std} to define weak descent composition for a product shapes. For example, the standard key tableau of shape $(0,2,0,3)\otimes(0,0,2,1)$ on the right side of Figure~\ref{fig:slide} has run decomposition $(876|54|321)$ and weak descent composition $(3,2,3,0)$.

\begin{figure}[ht]
  \begin{displaymath}
    \begin{array}{ccccccc}
      \vline\tableau{5 & 4 & 1 \\ \\ 3 & 2\\\hline} & \times & \vline\tableau{3 \\ 5 & 4 \\ \\\hline} & \times & AABABAAB
      &  \longleftrightarrow &
      \vline\tableau{8 & 7 & 2 \\ \\ 5 & 3 \\\hline} \raisebox{-.5\cellsize}{$\otimes$} \vline\tableau{1 \\ 6 & 4 \\ \\\hline}
    \end{array}
  \end{displaymath}
  \caption{\label{fig:slide}An example of the bijection from $\Key(2,0,3) \times \Key(0,2,1) \times A^{5} \shuffle B^{3}$ to $\Key((2,0,3)\otimes(0,2,1))$.}
\end{figure}

\begin{theorem}
  For weak compositions $a,b$, we have
  \begin{equation}
    \key_{a} \key_{b} = \sum_{T \in \Key(\mu\otimes\nu)} \fund_{\des(T)}.
  \end{equation}
  \label{thm:key-fund}
\end{theorem}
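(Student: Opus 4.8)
The plan is to follow the template of the Schur case, replacing Gessel's shuffle product (Theorem~\ref{thm:shuffle}) with the Assaf--Searles slide product (Theorem~\ref{thm:slide}). First I would expand both factors using the standard key tableau model \eqref{e:key-fund-des}, writing
\[
  \key_{a}\,\key_{b} \;=\; \sum_{S\in\Key(a)}\ \sum_{U\in\Key(b)} \fund_{\des(S)}\,\fund_{\des(U)} .
\]
Applying Theorem~\ref{thm:slide} to each product $\fund_{\des(S)}\fund_{\des(U)}$ rewrites it as $\sum_{C}\fund_{\des(C)}$, where $C$ ranges over the non-virtual shuffles of the two words of Definition~\ref{def:slide} that encode $\des(S)$ and $\des(U)$. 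This reduces the theorem to producing a $\des$-preserving bijection between the triples $(S,U,C)$ and the set $\Key(a\otimes b)$, which is exactly the correspondence depicted in Figure~\ref{fig:slide}.

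Second, I would construct that bijection. Given $T\in\Key(a\otimes b)$, split $T$ into its $a$-component and its $b$-component. The subset of values $\{1,\dots,n\}$ occupying the $a$-component records the shuffle $C$, and standardizing those values to $1,\dots,|a|$ (preserving their relative order) produces a filling $S$; standardizing the $b$-component likewise produces $U$. Conversely, a triple $(S,U,C)$ reassembles into a unique $T$ by using $C$ to distribute $\{1,\dots,n\}$ between the two components and then filling each according to $S$ and $U$. The essential observation is that the defining conditions for a standard key tableau (Definition~\ref{def:key-tab})---weakly decreasing rows and the column-inversion condition---refer only to the relative order of entries within a single diagram. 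Hence they are preserved in both directions of the standardization, so $S\in\Key(a)$, $U\in\Key(b)$, and the reassembled $T$ genuinely lies in $\Key(a\otimes b)$; the two maps are visibly mutually inverse.

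Third---and this is where I expect the main difficulty---I would verify $\des(T)=\des(C)$. Both sides are built from a run decomposition of the decreasing word $n\cdots21$ followed by the recursive bookkeeping $t_i=\min(\,\cdot\,,\,t_{i+1}-1)$, so I would match them run block by run block. Reading the product shape with the $a$-component before the $b$-component (Definition~\ref{def:run-std}), a descent of $T$ occurs exactly where the shuffle $C$ has a descent, because the interleaved alphabet of Definition~\ref{def:slide}---odd letters for the $a$-rows and even letters for the $b$-rows---encodes precisely the relative horizontal order of consecutive values across the two diagrams. The one genuinely delicate ingredient is the identification of row indices: the quantity $\lceil C^{(i)}_1/2\rceil$ in Definition~\ref{def:slide} is engineered so that halving the interleaved alphabet returns the row $\mathrm{row}(\tau^{(i)}_1)$ used in Definition~\ref{def:des-std}, after which the two $\min$-recursions agree termwise. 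Finally, the virtual cases line up on the nose: a triple gives $\des(C)=\varnothing$ exactly when the corresponding $T$ has $\des(T)=\varnothing$, and both contribute $\fund_{\varnothing}=0$, so restricting to non-virtual objects on both sides is harmless. Combining the three steps yields the stated identity.
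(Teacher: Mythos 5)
Your proposal is correct and follows essentially the same route as the paper: the paper's proof likewise expands both factors via \eqref{e:key-fund-des}, constructs the very same bijection $\Key(a)\times\Key(b)\times\left(A^{|a|}\shuffle B^{|b|}\right)\stackrel{\sim}{\longrightarrow}\Key(a\otimes b)$ by distributing the values $\{1,\ldots,n\}$ between the two diagrams according to the shuffle word, and then invokes Theorem~\ref{thm:slide}. The only difference is one of emphasis: the paper treats the $\des$-compatibility of this bijection as immediate, whereas you rightly flag it as the delicate step and sketch how the $\lceil C^{(i)}_1/2\rceil$ bookkeeping of Definition~\ref{def:slide} matches the row recursion of Definition~\ref{def:des-std}.
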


\begin{proof}
  Given $(T,U)\in\Key(a) \times \Key(b)$ and $C\in A^{|a|} \shuffle B^{|b|}$, we may construct an element of $\Key(a\otimes b)$ by placing the indices $\{i \mid C_i=A\}$ into $\YD(a)$ according to the order specified by $T$, and placing the indices $\{i \mid C_i=B\}$ into $\YD(b)$ according to the order specified by $U$. This process is clearly reversible, establishing a bijection
  \begin{displaymath}
    \Key(a) \times \Key(b)\times \left(A^{|a|} \shuffle B^{|b|}\right)
    \stackrel{\sim}{\longrightarrow} \Key(a\otimes b).
  \end{displaymath}
  Using this bijection and applying Theorem~\ref{thm:slide} gives
  \begin{displaymath}
    \key_{a} \key_{b} = \sum_{(T,U)\in\Key(a) \times \Key(b)} \fund_{\des(T)} \fund_{\des(U)} = \sum_{T\in\Key(a\otimes b)} \fund_{\des(T)}.
  \end{displaymath}
\end{proof}

%For example, using only the non-virtual terms, we have
%\begin{eqnarray*}
%  \key_{(2,0,3)} \key_{(0,2,1)} & = & \fund_{(2,0,3)} \fund_{(0,2,1)} + \fund_{(2,0,3)} \fund_{(1,2,0)} \\
%  & = & \fund_{(2,2,4)} + \fund_{(2,3,3)} + \fund_{(2,4,2)} + \fund_{(2,5,1)} + \fund_{(3,1,4)} + \fund_{(3,2,3)} + \fund_{(3,3,2)} + \fund_{(3,4,1)} \\
%  & & + \fund_{(4,0,4)} + \fund_{(4,1,3)} + \fund_{(4,2,2)} + \fund_{(4,3,1)} + \fund_{(3,2,3)} + \fund_{(3,3,2)} + \fund_{(3,4,1)} + \fund_{(3,5,0)}.
%\end{eqnarray*}

Extend dual equivalence operators $d_i$ from Definition~\ref{def:key-deg} to standard key tableaux on products by asserting in Definition~\ref{def:key-deg} that $d_i(T)=T$ whenever $c=i$ (this is implicit in the original definition as shown in the proof of Lemma~\ref{lem:dual-SKT}). For example, Figure~\ref{fig:dual-key-product} gives one dual equivalence class for $\Key(0,2,1,0) \times \Key(0,1,0,1)$. Note that the generating function is $s_{(3,2)}$ and the generating polynomial is $\key_{(0,3,2,0)}$.

\begin{figure}[ht]
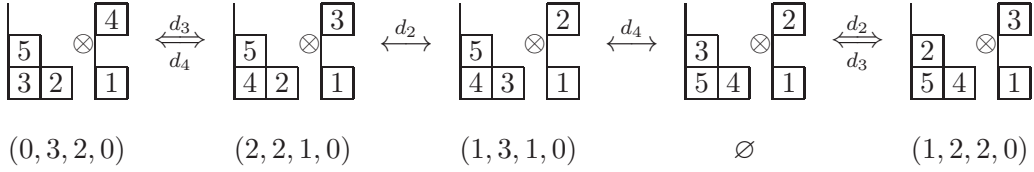

  \begin{displaymath}
    \begin{array}{ccccccccc}
      \vline\tableau{ \\ 5 \\ 3 & 2 \\\hline} \raisebox{-.5\cellsize}{$\otimes$} \vline\tableau{ 4 \\ \\ 1 \\\hline }
      & \raisebox{-\cellsize}{$\stackrel{\displaystyle\stackrel{d_3}{\Longleftrightarrow}}{\scriptstyle d_4}$}  &
      \vline\tableau{ \\ 5 \\ 4 & 2 \\\hline} \raisebox{-.5\cellsize}{$\otimes$} \vline\tableau{ 3 \\ \\ 1 \\\hline }
      & \raisebox{-0.5\cellsize}{$\stackrel{d_2}{\longleftrightarrow}$} &                                                
      \vline\tableau{ \\ 5 \\ 4 & 3 \\\hline} \raisebox{-.5\cellsize}{$\otimes$} \vline\tableau{ 2 \\ \\ 1 \\\hline } 
      & \raisebox{-0.5\cellsize}{$\stackrel{d_4}{\longleftrightarrow}$} &                                                
      \vline\tableau{ \\ 3 \\ 5 & 4 \\\hline} \raisebox{-.5\cellsize}{$\otimes$} \vline\tableau{ 2 \\ \\ 1 \\\hline }
      & \raisebox{-\cellsize}{$\stackrel{\displaystyle\stackrel{d_2}{\Longleftrightarrow}}{\scriptstyle d_3}$}  &
      \vline\tableau{ \\ 2 \\ 5 & 4 \\\hline} \raisebox{-.5\cellsize}{$\otimes$} \vline\tableau{ 3 \\ \\ 1 \\\hline } \\ \\
      (0,3,2,0) & & (2,2,1,0) & & (1,3,1,0) & & \varnothing & & (1,2,2,0)
    \end{array}
  \end{displaymath}
  \caption{\label{fig:dual-key-product}A dual equivalence class for $\Key((0,2,1,0) \otimes (0,1,0,1))$ along with their weak descent compositions.}
\end{figure}

\begin{proposition}
  The maps $\{d_i\}$ give a dual equivalence for $(\Key(a\otimes b),\Des)$. In particular,
  \begin{equation}
    \lim_{m\rightarrow\infty} \key_{0^m\times a} \key_{0^m\times b} = \sum_{\lambda} c_{\sort(a),\sort(b)}^{\lambda} s_{\lambda},
    \label{e:LRR-key-limit}
  \end{equation}
  where $c_{\mu,\nu}^{\lambda}$ is the number of super-standard tableaux of shape $\mu\otimes\nu$ that rectify to $\lambda$.
  \label{prop:LRR-key-limit}
\end{proposition}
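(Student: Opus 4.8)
The plan is to pull the dual equivalence on $\SYT(\sort(a)\otimes\sort(b))$ from Theorem~\ref{thm:LRR-schur} back along an explicit $\Des$-compatible bijection, exactly as in the proofs of Theorems~\ref{thm:dual-SKT} and \ref{thm:skew-key-limit}. Write $\mu=\sort(a)$ and $\nu=\sort(b)$. First I would extend the rectification bijection $\Phi$ from the proof of Theorem~\ref{thm:dual-SKT} componentwise: given $T\in\Key(a\otimes b)$, let the cells of the $a$-component fall to shape $\mu$ and those of the $b$-component fall to shape $\nu$, sort each set of columns to decrease going up, and then replace every entry $i$ by $n-i+1$. Since the key tableaux condition forces $i+1$ above $i$ whenever the two share a column, letting cells fall and sorting columns produces a genuine standard Young tableau on each component, so $\Phi(T)\in\SYT(\mu\otimes\nu)$, and the construction is manifestly reversible.

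Next I would verify the two compatibility statements that make $\Phi$ transport the dual equivalence. As in Theorem~\ref{thm:dual-SKT}, the complementation $i\mapsto n-i+1$ gives $i\in\Des(T)$ if and only if $n-i\in\Des(\Phi(T))$, so $\Des(\Phi(T))$ is the reverse of $\Des(T)$; and $\Phi(d_i(T))=d_{n-i+1}(\Phi(T))$, because $d_i$ is determined by the three cells carrying $i-1,i,i+1$ in column reading order, with $\swap_{i-1}$ and $\swap_i$ corresponding under $\Phi$ to exchanging the larger pair and the smaller pair, and $\braid_i$ corresponding (after sorting columns) to swapping the smaller pair. The only feature new to the product setting is that these three cells may be split across the two components; here I would check that the column reading convention for products (entries of the first shape read before those of the second), used both to define $d_i$ on $\Key(a\otimes b)$ and $d_i$ on $\SYT(\mu\otimes\nu)$, is preserved by the independent falling and column-sorting on each component. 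Because far commutation $|i-j|\geq 3$ and the generation of Schur functions by restricted classes are both invariant under $i\mapsto n-i+1$, Theorem~\ref{thm:LRR-schur} then yields that $\{d_i\}$ is a dual equivalence for $(\Key(a\otimes b),\Des)$.

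For the ``in particular'' statement, I would obtain the limit identity most directly from stabilization: by Proposition~\ref{prop:key-limit} the limit of a product is the product of the limits, so $\lim_{m\to\infty}\key_{0^m\times a}\,\key_{0^m\times b}=s_{\mu}s_{\nu}$, which equals $\sum_{\lambda}c_{\mu,\nu}^{\lambda}s_{\lambda}$ by Theorem~\ref{thm:LRR-schur}. The dual equivalence from the previous paragraph then supplies the promised combinatorial meaning of the coefficients: by Theorem~\ref{thm:positivity} each class of $\Key(a\otimes b)$ contributes a single $s_\lambda$, and since $\Phi$ is a $\Des$-compatible bijection intertwining the operators, it matches the super-standard elements of $\Key(a\otimes b)$ rectifying to $\lambda$ with the super-standard tableaux of shape $\mu\otimes\nu$ rectifying to $\lambda$, i.e.\ with $c_{\mu,\nu}^{\lambda}$. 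One may alternatively reach the same conclusion by stabilizing the identity of Theorem~\ref{thm:key-fund} via Proposition~\ref{prop:fund-limit} and using $\Des(T)=\flatten(\des(T))$, but the product-of-limits route avoids any bookkeeping about reversed descent compositions.

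The main obstacle I anticipate is precisely the local verification $\Phi(d_i(T))=d_{n-i+1}(\Phi(T))$ when the cells carrying $i-1,i,i+1$ straddle the two components of $a\otimes b$: one must confirm that the product column reading order is respected by the separate falling and column-sorting performed on each shape, and that the braid case behaves correctly after sorting (a braid requires two of the cells in a common row, so its analysis within the concatenated shape must be handled carefully at the component boundary). Everything else is a routine transcription of the single-shape arguments of Theorem~\ref{thm:dual-SKT}, and the expansion \eqref{e:LRR-key-limit} follows once the dual equivalence and the stable limit are in hand.
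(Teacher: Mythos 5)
Your proposal is correct and follows essentially the same route as the paper, whose entire proof is that the bijection $\Phi$ from Theorem~\ref{thm:dual-SKT} ``easily extends to products while maintaining descents (with the exchange $i \mapsto n-i$) and commuting with dual equivalence involutions (with the indexing exchange $i \mapsto n-i+1$),'' so the result follows from Theorem~\ref{thm:LRR-schur}. Your write-up is a careful fleshing-out of exactly this argument (including the boundary-straddling check the paper leaves implicit), and your product-of-limits derivation of \eqref{e:LRR-key-limit} is a harmless variant of the stabilization the paper leaves unstated.
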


\begin{proof}
  Borrowing from the proof of Theorem~\ref{thm:dual-SKT}, the bijection $\Phi:\Key(a)\rightarrow\SYT(\sort(a))$ easily extends to products while maintaining descents (with the exchange $i \mapsto n-i$) and commuting with dual equivalence involutions (with the indexing exchange $i \mapsto n-i+1$). Therefore the result follows from Theorem~\ref{thm:LRR-schur}.
\end{proof}

These involutions do not, in general, give a weak dual equivalence. For example, Figure~\ref{fig:bad-key-product} gives another dual equivalence class for $\Key(0,2,1,0) \otimes \Key(0,1,0,1)$. Note that the generating function is $s_{(2,2,1)}$ but the generating polynomial is
\[\key_{(2,2,0,1)} + \key_{(1,2,1,1)} + \key_{(1,2,2,0)} - \key_{(2,2,1,0)} - \key_{(1,2,1,1)},\]
which is not positive. Computing the full key product, some cancellation occurs, and we have
\begin{displaymath}
  \key_{(0,2,1,0)} \key_{(0,1,0,1)} = \key_{(0,3,1,1)} + \key_{(1,2,1,1)} + \key_{(0,3,2,0)} + \key_{(2,2,0,1)} + \key_{(1,2,2,0)} - \key_{(2,2,1,0)}.
\end{displaymath}

\begin{figure}[ht]
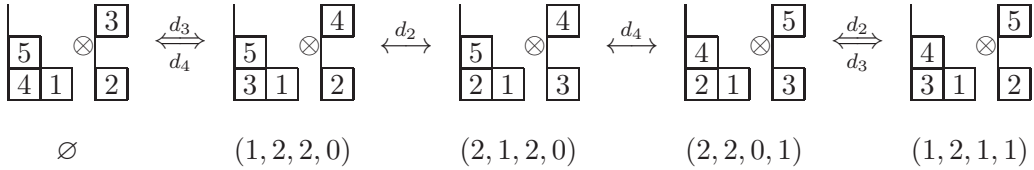

  \begin{displaymath}
    \begin{array}{ccccccccc}
      \vline\tableau{\\ 5 \\ 4 & 1 \\\hline} \raisebox{-.5\cellsize}{$\otimes$} \vline\tableau{ 3 \\ \\ 2 \\\hline }
      & \raisebox{-\cellsize}{$\stackrel{\displaystyle\stackrel{d_3}{\Longleftrightarrow}}{\scriptstyle d_4}$}  &
      \vline\tableau{\\ 5 \\ 3 & 1 \\\hline} \raisebox{-.5\cellsize}{$\otimes$} \vline\tableau{ 4 \\ \\ 2 \\\hline }
      & \raisebox{-0.5\cellsize}{$\stackrel{d_2}{\longleftrightarrow}$} &                                                
      \vline\tableau{\\ 5 \\ 2 & 1 \\\hline} \raisebox{-.5\cellsize}{$\otimes$} \vline\tableau{ 4 \\ \\ 3 \\\hline } 
      & \raisebox{-0.5\cellsize}{$\stackrel{d_4}{\longleftrightarrow}$} &                                                
      \vline\tableau{\\ 4 \\ 2 & 1 \\\hline} \raisebox{-.5\cellsize}{$\otimes$} \vline\tableau{ 5 \\ \\ 3 \\\hline }
      & \raisebox{-\cellsize}{$\stackrel{\displaystyle\stackrel{d_2}{\Longleftrightarrow}}{\scriptstyle d_3}$}  &
      \vline\tableau{\\ 4 \\ 3 & 1 \\\hline} \raisebox{-.5\cellsize}{$\otimes$} \vline\tableau{ 5 \\ \\ 2 \\\hline } \\ \\
      \varnothing & & (1,2,2,0) & & (2,1,2,0) & & (2,2,0,1) & & (1,2,1,1)
    \end{array}
  \end{displaymath}
  \caption{\label{fig:bad-key-product}A dual equivalence class for $\Key(0,2,1,0) \times \Key(0,1,0,1)$ along with their weak descent compositions.}
\end{figure}

The problem that arises in this example is that, in the second term from the right, the $3$ forces its run decomposition term to have a lower index in the weak descent composition. We avoid this pitfall by instead taking the product
\begin{displaymath}
  \key_{(0,2,1,0)} \key_{(0,0,1,1)} = \key_{(0,3,1,1)} + \key_{(1,2,1,1)} + \key_{(0,3,2,0)} + \key_{(0,2,2,1)},
\end{displaymath}
and the expansion is positive, as desired. This leads to a general positivity result.

\begin{theorem}
  For $\lambda$ a partition of length at most $n$, let $a = a(\lambda,n)$ be the weakly increasing weak composition of length $n$ that sorts to $\lambda$. For $b$ any weak composition of length $n$, the maps $\{d_i\}$ give a weak dual equivalence for $(\Key(b \otimes a_{\lambda}),\des)$. In particular,
  \begin{equation}
    \key_{b} s_{\lambda}(x_1,\ldots,x_n) = \key_b \key_{a_{\lambda}} = \sum_{d} c_{b,a}^{d} \key_{d},
    \label{e:LRR-key}
  \end{equation}
  where $c_{b,a}^{d}$ is the number of yamanouchi key tableaux of shape $b\otimes a$ that rectify to $d$.
  \label{thm:LRR-key}
\end{theorem}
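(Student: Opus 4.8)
The plan is to establish the structural claim first—that $\{d_i\}$ is a stable weak dual equivalence for $(\Key(b\otimes a_\lambda),\des)$—and then read off the key expansion from the general machinery. Granting the weak dual equivalence, Theorem~\ref{thm:positivity-key} shows that the fundamental slide generating polynomial $\sum_{T\in\Key(b\otimes a_\lambda)}\fund_{\des(T)}$ is key positive; by Theorem~\ref{thm:key-fund} this polynomial equals $\key_b\key_{a_\lambda}$, and since $a_\lambda$ is the weakly increasing arrangement of $\lambda$ we have $\key_{a_\lambda}=s_\lambda(x_1,\ldots,x_n)$, which yields the product formula. The coefficient $c_{b,a}^d$ is then identified, via Proposition~\ref{prop:super-Key}, as the number of yamanouchi elements of $\Key(b\otimes a)$ that weakly rectify to $d$, exactly as in Theorem~\ref{thm:sch-key}.

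For the structural claim, the ordinary dual equivalence is already available: Proposition~\ref{prop:LRR-key-limit} gives that $\{d_i\}$ is a dual equivalence for $(\Key(b\otimes a_\lambda),\Des)$, so the commutation condition (ii) of Definition~\ref{def:deg-weak} holds and each restricted class under $d_h,\ldots,d_i$ carries the $\Des$-structure of a single Schur function. The entire content of promoting this to a \emph{weak} dual equivalence is condition (i): that for $i-h\le 3$ each such restricted class has fundamental slide generating polynomial equal to a single key polynomial. This is precisely where the hypothesis that $a_\lambda$ is weakly increasing enters.

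The main obstacle is the following analog of the remark after Definition~\ref{def:des-std}: for $a_\lambda$ weakly increasing, the weak descent composition of any $T\in\Key(b\otimes a_\lambda)$ can be computed using the row of the \emph{first} (largest) entry of each run block in place of the minimum over all entries of the block. The failure of this simplification in general is exactly what produces the non-positive class in Figure~\ref{fig:bad-key-product}. To prove it, I would examine the potentially offending configuration, a run block $\tau^{(i)}$ containing consecutive entries $k+1,k$ with $k$ strictly right of and strictly below $k+1$, and show that the index $t_i$ is already forced strictly below the row of $k$. When $k+1$ and $k$ lie in the same tensor factor, the key tableaux condition supplies a larger entry $\ell$ in the row of $k$ and the column of $k+1$, to the left of $k$; as in the remark, $\ell$ sits in a higher run block whose index is at most the row of $k$, which caps $t_i$ below that row. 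The delicate case is when $k+1$ and $k$ straddle the two factors; here one uses that $a_\lambda$ is weakly increasing to produce the same forcing across the concatenation, so that $t_i$ is again capped below the row of $k$. This cross-component analysis is the crux of the argument and the step I expect to require the most care.

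Once the first-entry rule is in force, weak descent compositions on $\Key(b\otimes a_\lambda)$ are computed exactly as for ordinary standard key tableaux, so the restricted-class computation carried out in the proof of Theorem~\ref{thm:deg-key} transfers verbatim: each restricted class under $d_h,\ldots,d_i$ generates a single $\fund$-stable key polynomial, establishing condition (i) and hence the stable weak dual equivalence. Finally, since the product shape may contain virtual elements (those $T$ with $\des(T)=\varnothing$), I would remove them by the stabilization device of Theorem~\ref{thm:sch-key}, prepending zeros to both $b$ and $a_\lambda$ so that Theorem~\ref{thm:positivity-key} applies, and then pulling the induced $\des$-preserving bijection back to the original nonvirtual terms. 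This completes the proof.
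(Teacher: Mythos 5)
Your plan follows the paper's proof quite closely: use Proposition~\ref{prop:LRR-key-limit} for the involution and commutation properties, reduce condition (i) of Definition~\ref{def:deg-weak} to the single-tableau case by showing that on $\Key(b\otimes a_\lambda)$ the weak descent composition can be computed from the first (largest) entry of each run block, and then invoke Theorem~\ref{thm:deg-key}; the derivation of \eqref{e:LRR-key} from the structural claim is likewise the same. However, the proposal never proves the one step that carries all the content of the theorem: the cross-factor case of the first-entry rule is merely announced (``one uses that $a_\lambda$ is weakly increasing to produce the same forcing across the concatenation'') and then set aside as the step expected to require the most care. Since this is precisely where the hypothesis on $a_\lambda$ enters --- and where the argument fails for general products, as your own citation of Figure~\ref{fig:bad-key-product} shows --- leaving it unproved is a genuine gap rather than a deferred detail.

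The missing argument runs as follows. Entries of a run block move strictly right as they decrease, and every cell of the $a_\lambda$ factor lies strictly right of every cell of the $b$ factor; hence each block consists of some entries of $b$ followed by some entries of $a_\lambda$ (in particular a block can never pass from $a_\lambda$ back to $b$, which is why the partition factor must be the right-hand factor). Suppose an entry $m$ of a block $\tau^{(s)}$ lies in $a_\lambda$ in a row strictly below some larger entry of $\tau^{(s)}$ lying in $b$. Because $a_\lambda$ is weakly increasing, the cell directly above $m$ exists in $a_\lambda$, and because key tableaux of weakly increasing shape have columns decreasing from top to bottom (a consequence of the key tableau condition together with left-justified rows that lengthen going up), its entry $\ell$ satisfies $\ell>m$. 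Moreover $\ell\notin\tau^{(s)}$: any entry of $\tau^{(s)}$ larger than $m$ lies strictly left of $m$, whereas $\ell$ sits in $m$'s column; since a block is an interval of consecutive integers, $\ell$ therefore exceeds its maximum and lies in some $\tau^{(u)}$ with $u>s$. Hence $t_u\le\row(\ell)=\row(m)+1$, and since the indices satisfy $t_s<t_{s+1}<\cdots$, we get $t_{s+1}-1\le t_u-1\le\row(m)$, so $\row(m)$ is redundant in the minimum defining $t_s$ in Definition~\ref{def:des-std}. Together with your same-factor analysis, this establishes the first-entry rule, after which the rest of your outline goes through as stated.
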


\begin{proof}
  By Proposition~\ref{prop:LRR-key-limit}, the maps $\{d_i\}$ are involutions, and $d_i$ and $d_j$ commute for $|i-j|\geq 3$. Therefore we need only consider weak descent compositions of restricted dual equivalence classes, and for this we must consider how the weak descent composition can differ from the case of a single key tableau. Note that the key tableaux conditions ensure that $\Key_{a_{\lambda}}$ has decreasing rows (left to right) and columns (top to bottom). The only place for discrepancy with Theorem~\ref{thm:deg-key} is if some entry, say $i$, in $a_{\lambda}$ lies in the same run block but in a lower row than an entry, say $j>i$, in $b$. However, in this case, there is necessarily an entry, say $k$, immediately above $i$ and $k>j$ since $k>i$ and not in the same run block. Therefore, in constructing the weak descent composition, the run block containing $k$ will be indexed by the row above $i$, or lower, and so the run block for $j$ will be indexed by the row of $i$, or lower. Therefore we may equivalently define the weak descent compositions by taking the largest entry, instead of the lowest entry, and now the definition coincides with that for a single key tableaux. Therefore the result follows from Theorem~\ref{thm:deg-key}. 
\end{proof}

Notice that the proof of Theorem~\ref{thm:LRR-key} is dependent on placing the partition key diagram to the right of the arbitrary key diagram. While the generating polynomials clearly commute, the obvious bijection between $\Key(a_{\lambda}\otimes b)$ and $\Key(b \otimes a_{\lambda})$ that swaps the two tableaux is not $\des$-preserving.

Since Schubert polynomials are polynomial representatives for Schubert classes in the cohomology ring, the structure constants for Schubert polynomials, $c_{u,v}^{w}$, defined by
\begin{displaymath}
  \schubert_{u} \cdot \schubert_{v} = \sum_{w} c_{u,v}^{w} \schubert_{w},
\end{displaymath}
enumerate flags in a suitable triple intersection of Schubert varieties. Therefore these so-called \emph{Littlewood--Richardson coefficients} are known to be nonnegative. A fundamental open problem in Schubert calculus is to find a \emph{positive} combinatorial construction for $c_{u,v}^{w}$.

Recalling that in the special case of the Grassmannian subvariety, Schubert polynomials are Schur polynomials \cite{LS82} which, in turn, are key polynomials, we have the following geometrically significant corollary to Theorem~\ref{thm:LRR-key}.

\begin{corollary}
  Given partitions $\mu,\nu$ and positive integers $m,n$, let $u=v(\mu,m), v=v(\nu,n)$ be the corresponding grassmannian permutations. Then we have
  \begin{equation}
    \schubert_{u} \schubert_{v} = \sum_{d} c_{a,b}^{d} \key_d,
  \end{equation}
  where $a=a(\mu,m), b=a(\nu,n)$ and $c_{a,b}^{d}$ is the number of Yamanochi key tableaux of shape $a\otimes b$ that rectify to $d$.
  \label{cor:grass-grass}
\end{corollary}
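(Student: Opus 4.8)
The plan is to rewrite each Schubert polynomial as a key polynomial indexed by a weakly increasing composition, reducing the statement to a direct application of Theorem~\ref{thm:LRR-key}. I would combine the two identifications of Schur polynomials recorded in the excerpt: the theorem of Lascoux and Sch\"utzenberger gives $\schubert_{v(\lambda,k)} = s_\lambda(x_1,\ldots,x_k)$, while the corollary to Theorem~\ref{thm:key-descend} gives $\key_{a(\lambda,k)} = s_\lambda(x_1,\ldots,x_k)$; together these yield $\schubert_{v(\lambda,k)} = \key_{a(\lambda,k)}$. Applying this with $(\lambda,k) = (\mu,m)$ and $(\lambda,k)=(\nu,n)$, and recalling $a = a(\mu,m)$ and $b = a(\nu,n)$, I obtain $\schubert_u = \key_a$ and $\schubert_v = \key_b$, so that $\schubert_u \schubert_v = \key_a \key_b$.

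Next I would exploit that $u$ and $v$ are grassmannian: by construction $a = a(\mu,m)$ and $b = a(\nu,n)$ are the weakly increasing partition-sorting compositions $a_\mu$ and $a_\nu$. In particular $b = a_\nu$ is weakly increasing, so taking $\lambda = \nu$ it is precisely the distinguished right-hand factor allowed in Theorem~\ref{thm:LRR-key}, with $a$ playing the role of the arbitrary left-hand composition. That theorem then supplies the weak dual equivalence on $\Key(a \otimes b)$ together with the expansion $\key_a \key_b = \sum_d c_{a,b}^d \key_d$, where $c_{a,b}^d$ counts the yamanouchi key tableaux of shape $a\otimes b$ that rectify to $d$ --- exactly the claimed formula.

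The main point demanding care is orientation. As the discussion following Theorem~\ref{thm:LRR-key} stresses, the shapes $a\otimes b$ and $b\otimes a$ cannot be interchanged, since the tableau-swapping map fails to preserve $\des$; the hypothesis genuinely requires the \emph{right} factor to be partition-sorting. I must therefore place $b$ on the right, which both meets the hypothesis and produces the shape $a\otimes b$ named in the statement. This is available precisely because $u$ and $v$ are grassmannian, so that both factors are weakly increasing and either could in principle occupy the right-hand slot; choosing $b$ matches the corollary's shape. A secondary bookkeeping check is that $\key_a$ and $\key_b$ may involve different numbers of variables when $m\neq n$, but the weak dual equivalence argument underlying Theorem~\ref{thm:LRR-key} depends only on the relative rows of the entries and on the right factor being weakly increasing, not on the two compositions sharing a common length, so no real difficulty arises and the corollary follows.
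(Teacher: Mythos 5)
Your proposal is correct and takes essentially the same route as the paper, which states this result as an immediate consequence of Theorem~\ref{thm:LRR-key} after observing that grassmannian Schubert polynomials are Schur polynomials and hence key polynomials indexed by weakly increasing compositions, so that $\schubert_u\schubert_v = \key_a\key_b$ with the partition-sorting factor $b = a(\nu,n)$ in the right-hand slot demanded by that theorem. Your extra attention to the orientation of the product $a\otimes b$ and to the possible length mismatch $m\neq n$ goes beyond what the paper records explicitly, but both points are consistent with its intended argument.
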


That is, we have a combinatorial rule for the key expansion of any arbitrary product of grassmannian Schubert polynomials. Note that the terms on the right side are not, in general, Schur polynomials. For example,
\begin{displaymath}
  s_{(1,1)}(x_1,x_2,x_3) s_{(1)}(x_1,x_2) = \key_{(1,1,1)} + \key_{(0,2,1)},
\end{displaymath}
and the latter term on the right is not symmetric.

%%%%%%%%%%%%%%%%%%%%%%%%%%%%%%%%%%%%%%%%%%%%%%%%%%%%%%%%%%%%
%
%  Bibliography
%
%%%%%%%%%%%%%%%%%%%%%%%%%%%%%%%%%%%%%%%%%%%%%%%%%%%%%%%%%%%%

\bibliographystyle{amsalpha} 
\bibliography{weak_dual.bib}

\end{document}